\numberwithin{equation}{section}
\newtheorem{theorem}{Theorem}[section]
\newtheorem{lemma}[theorem]{Lemma}
\newtheorem{proposition}[theorem]{Proposition}
\newtheorem{corollary}[theorem]{Corollary}
\theoremstyle{definition}
\newtheorem{definition}[theorem]{Definition}
\newtheorem{assumption}[theorem]{Assumption}
\theoremstyle{remark}
\newtheorem{remark}[theorem]{Remark}
\newtheorem{notation}[theorem]{Notation}
\newcommand{\dive}{\operatorname{div}}
\newcommand{\R}{\mathbb{R}}
\newcommand{\Z}{\mathbb{Z}}
\newcommand{\N}{\mathbb{N}}
\newcommand{\E}{\mathbb{E}}
\newcommand{\A}{\mathcal{A}}
\newcommand{\B}{\mathcal{B}}
\newcommand{\I}{\mathcal{I}}
\newcommand{\M}{\mathcal{M}}
\renewcommand{\O}{\mathcal{O}}
\renewcommand{\o}{\omega}
\renewcommand{\tilde}{\widetilde}
\renewcommand{\epsilon}{\varepsilon}
\newcommand{\dist}{\operatorname{dist}}
\renewcommand{\hat}{\widehat}
\renewcommand{\l}{\lambda}
\newcommand{\e}{\varepsilon}
\newcommand{\sym}{\operatorname{sym}}
\newcommand{\W}{\mathbf{W}}
\newcommand{\Lb}{\mathbf{L}}
\newcommand{\Hb}{\mathbf{H}}
\newcommand{\Cb}{\mathbf{C}}
\newcommand{\Nb}{\mathbf{N}}
\newcommand{\Gb}{\mathbf{G}}
\newcommand{\Vb}{\mathbf{V}}
\newcommand{\C}{\mathsf{C}}
\newcommand{\bu}{\mathbf{u}}
\newcommand{\bv}{\mathbf{v}}
\newcommand{\bb}{\mathbf{b}}
\newcommand{\bw}{\mathbf{w}}
\newcommand{\bg}{\mathbf{g}}
\newcommand{\bdf}{\mathbf{f}}
\newcommand{\be}{\mathbf{e}}
\newcommand{\bpsi}{\boldsymbol{\psi}}
\newcommand{\bvarphi}{\boldsymbol{\varphi}}
\newcommand{\bbeta}{\boldsymbol{\beta}}
\renewcommand{\hom}{{\rm hom}}
\newcommand\wtto{\stackrel{2}\rightharpoonup}
\renewcommand*{\backrefalt}[4]{%
\ifcase #1 %
No citations%
\or
\ding{43}~p.~#2%
\else
\ding{43}~pp.~#2%
\fi}
\begin{document}

\title{High-contrast random systems of PDEs: \\homogenisation and spectral theory}
\author{Matteo Capoferri\thanks{
\textcolor{black}{Dipartimento di Matematica,
Università degli Studi di Milano,
Via C.~Saldini 50,
20133 Milano, Italy}
\textit{and}
Maxwell Institute for Mathematical Sciences \&
Department of Mathematics,
Heriot-Watt University,
Edinburgh EH14 4AS,
UK;
\text{\textcolor{black}{matteo.capoferri@unimi.it}},
\url{https://mcapoferri.com}.
}
\and
Mikhail Cherdantsev\thanks{
School of Mathematics,
Cardiff University,
Senghennydd Road,
Cardiff CF24~4AG,
UK;
\text{CherdantsevM@cardiff.ac.uk}.
}
\and
Igor Vel\v{c}i\'c\thanks{
Faculty of Electrical Engineering and Computing, 
University of Zagreb, Unska 3, 
10000 Zagreb, Croatia;
\text{igor.velcic@fer.hr}.
}}


\date{}

\maketitle
\begin{abstract}
We develop a qualitative homogenisation and spectral theory for elliptic systems of partial differential equations in divergence form with highly contrasting (i.e., non uniformly elliptic) random coefficients. The focus of the paper is on the behaviour of the spectrum as the heterogeneity parameter tends to zero; in particular, we show that in general one doesn't have Hausdorff convergence of spectra. The theoretical analysis is complemented by several explicit examples, showcasing the wider range of applications and physical effects of systems with random coefficients, when compared with systems with periodic coefficients or with scalar operators (both random and periodic).

\

{\bf Keywords:} high-contrast media, random media, stochastic homogenisation, systems of PDEs.

\

{\bf 2020 MSC classes: }
primary 
74S25,  
74A40;  	
secondary 
35B27,  
74Q15,  
35B40,  
60H15,  
35P05. 
\end{abstract}

\tableofcontents

\allowdisplaybreaks

\section{Introduction}
\label{Introduction}

Homogenisation theory addresses the problem of determining effective properties of heterogeneous materials (\emph{composites}).  Over time,  this has become a well-established branch of mathematics,  and a large number of classical monographs are now available. Yet, due to breadth and relevance of the subject, mathematical and physical understanding of the properties of composites remains at the forefront of modern research in science and industry.

A very interesting class of problems --- often referred to as \emph{high-contrast problems} --- is obtained by considering near-resonant inclusions dispersed in a base medium (often referred to as ``matrix''), in such a way that the macroscopic properties of the material are accounted for by resonances occurring at the microscopic scale.  Mathematically,  an important family of such problems is modelled by second order elliptic operators of the form 
\vspace{-2mm}
\begin{equation}
\label{operator D epsilon general}
\mathcal{D}^\epsilon=-\nabla \cdot a_\epsilon \nabla
\vspace{-2mm}
\end{equation}
defined on bounded or unbounded domains of $\R^d$, where the coefficients $a_\epsilon\in [L^\infty(\R^d)]^{d\times d}$, describing the physical properties of a material with scale of heterogeneity $\epsilon$, are of order $\epsilon^2$ in the inclusions and of order $1$ in the surrounding base medium.  This critical \emph{double-porosity} type scaling results in ``micro-resonances'',  which ultimately gives rise to numerous novel effects that are not present in the classical (uniformly elliptic) setting.
One such effect, and possibly the most important, is the band-gap structure of the spectrum, which makes high-contrast problems particularly relevant from the point of view of applications.

One of the mathematical challenges of working with high-contrast media is that the family of operators \eqref{operator D epsilon general} is not uniformly elliptic as $\epsilon\to0$,  which, in turn, results in a loss of compactness in the limit.  In pioneering work,  Allaire \cite{allaire} showed, in the {\it periodic} setting, that the limiting $\mathcal{D}^\mathrm{hom}$, analogue of the homogenised operator, possesses a \emph{two-scale} nature, that is,  it accounts for both macroscopic and microscopic properties, coupled in a particular way.  Zhikov \cite{zhikov2000,zhikov2004} later defined and analysed the two-scale limit operator, and established spectral convergence.  By decoupling the macroscopic and microscopic scales in the two-scale spectral problem $\mathcal{D}^\mathrm{hom}u=\lambda u$,  one obtains a spectral problem for the macroscopic component $\mathcal{D}^\mathrm{hom}_{\mathrm{mac}}u_1(x)=\beta(\lambda)u_1(x)$ with highly nonlinear dependence on the spectral parameter $\lambda$ expressed via  \emph{Zhikov's $\beta$-function}.   The latter, being defined explicitly in terms of the resonant frequencies of the inclusions,  characterises the macroscopic behaviour of the composite by virtue of capturing length-scale interactions.  In particular, the spectrum of the limit operator is described by
\vspace{-2mm}
\begin{equation*}
\sigma(\mathcal{D}^\mathrm{hom})=\{\lambda:\ \beta(\lambda)\ge0\} \cup \{\text{eigenfrequencies of the inclusions}\},
\vspace{-2mm}
\end{equation*}
and, as inferred by the explicit representation of $\beta(\l)$, has infinitely many gaps.

The literature in the high-contrast periodic setting is very rich.  The seminal results by Zhikov have been extended in numerous directions,  encompassing both scalar operators and systems of PDEs,  rigorously demonstrating new non-trivial properties, such as, for instance,  memory effects and spatial nonlocality \cite{CSZ,avila08}.  Note that even in the absence of high-contrast in the constituents, by carefully choosing the frequency of a Bloch wave type solution in relation to the wavenumber along the fibres one can \emph{artificially} obtain a high-contrast operator on the cross-section of the material \cite{CKS14}. Amongst recent advancements, Cherednichenko, Ershova and Kiselev \cite{kirill2} established a correspondence between homogenisation limits of micro-resonant periodic composites and a class of time-dispersive media.  Their interesting and new techniques, based on norm-resolvent estimates \textcolor{black}{(see also \cite{CCarma} for earlier work in this regard in the high-contrast setting)},  offer a recipe for the \textcolor{black}{design} of media with prescribed dispersive properties from periodic composites whose individual components are non-dispersive,  opening the way to the possibility of interpreting high-contrast media as frequency-converting devices. These scalar results were generalised to the case of systems in \cite{iosip}. Very recently, significant advancement in the subject was made by Cooper, Kamotski and Smyshlyaev: in \cite{CKS23} they developed a generic approach to study abstract family of asymptotically degenerating variational problems including, while going far beyond, \textcolor{black}{periodic} high-contrast highly oscillatory PDEs. Their asymptotic analysis yields uniform operator-type error estimate, and their novel approach provides approximations of the spectra of the associated spectral problems in terms of the spectrum of a certain ``bivariate'' operator. The latter arises as an abstract generalisation of the two-scale limiting operators traditionally \textcolor{black}{emerging} in the high-contrast literature.

Since fabricating perfectly periodic composites may be challenging from a practical point of view, it is natural to ask whether the properties of high-contrast materials desirable in applications (e.g., the band-gap structure of the spectrum) persist,  in a quantitatively controllable manner, when the geometry of the inclusions, the coefficients of \textcolor{black}{the} operator, or both involve an element of randomness.  Stochastic homogenisation,  qualitatively set out in \cite{kozlov,papanicolau,kunnemann},  is nowadays a very lively field of research, with a big community 
 concerned mainly with uniformly elliptic (non-high contrast) operators.  A most challenging and delicate aspect of the stochastic theory is to obtain growth estimates in the appropriate norms for the \emph{corrector}, a quantity that appears in the first-order term of the asymptotic expansion ansatz for the homogenisation problem. Comprehensive \emph{quantitative} results were first obtained by Gloria, Neukamm and Otto in the early 2010's in a series of ground-breaking papers \cite{gloria1,gloria2, gloria3} in which,  building upon earlier results by Yurinskii \cite{yurinskii} and Naddaf--Spencer \cite{naddaf}, they proved that the corrector is essentially of bounded growth. Similar results were obtained using a different (variational)  argument by Armstrong, Smart, Kuusi and Mourrat, see e.g. \cite{ArmSmart}, \cite{armstrong} and references in the latter. These developments  prompted an intensive research activity, which resulted in a number of new elegant ideas whose potential has not yet been fully explored.

For \emph{high-contrast} operators with random coefficients the picture is completely different, in that much less is known and even the \emph{qualitative} theory is still under development. Indeed, moving from high-contrast periodic to high-contrast stochastic PDEs involves a step change in both conceptual and technical difficulty. Stochastic two-scale resolvent convergence for high-contrast scalar operators on bounded domains and Hausdorff convergence of spectra were established by Cherdantsev, Cherednichenko and Vel\v{c}i\'c in \cite{CCV1}, building upon earlier results by Zhikov and Piatnitski \cite{ZP}.  While stochastic two-scale convergence works equally well in unbounded domains, the same authors showed \cite{CCV2} that in the whole space $\R^d$ the lack of compactness combined with the presence of infinitely many areas with ``atypical'' distributions of inclusions allows one to construct quasimodes for the random operator $\mathcal{D}^\epsilon$ ``escaping to infinity'', namely, weakly converging to zero, as $\varepsilon\to0$. The latter entails that generically 
the spectrum $\sigma(\mathcal{D}^\mathrm{hom})$ of the two-scale limiting operator, characterised by a stochastic version of $\beta(\lambda)$,  is a proper subset of the limiting spectrum $\lim_{\epsilon\to0}\sigma(\mathcal{D}^\epsilon)$, in turn characterised by $\beta_\infty(\lambda)$, a ``local'' modification of the stochastic $\beta(\lambda)$. In fact, it can be shown that the limiting spectrum has infinitely many gaps for a range of physically meaningful examples \cite[Section 4.6]{CCV2}. Also in the scalar setting, the authors of the current paper studied defect modes for operators with random coefficients \cite{decay}. Finally, 
we should mention the paper \cite{APZ21}, in which the authors study the large time behaviour of a Markov process associated with a symmetric diffusion in a high-contrast random environment and characterise the limit semigroup and the limit process under the diffusive scaling. Although there are certain similarities with \cite{CCV2}, the setting of \cite{APZ21} is somewhat less general and the focus \textcolor{black}{is} quite different.

\

The properties of high-contrast \emph{systems} with random coefficients are, to date, completely unexplored. Already in the periodic setting, high-contrast systems exhibit additional novel effects, compared with scalar operators.
For example, in the macroscopic component of the limiting spectral problem for systems the $\beta$-function is replaced by a $\bbeta$-matrix, which can be thought of as an ``effective density''.  It has been shown that for high-contrast periodic systems the $\bbeta$-matrix is frequency dependent,  and may be anisotropic as well as non-positive definite \cite{bouchitte,avila08,pastukova}, and even wavenumber dependent \cite{smyshlyaev_materials}.  What determines the structure of the spectrum is the signature of the $\bbeta$-matrix, which no longer has just a binary nature (positive/negative), thus resulting in a richer (and more complex) theory. Indeed, on top of ``strong'' bands (\textcolor{black}{maximal number of propagation modes} permitted) and gaps (propagation completely forbidden), one can have the intermediate situation of ``weak'' bands. This means that the number of macroscopic propagating modes may vary with frequency \cite{avila08} and, in the case of interconnected inclusions, with direction as well \cite{smyshlyaev_materials}.

Due to their multidimensional nature, systems also allow for the intermediate situation of \emph{partial degeneracy} (or \emph{partial high contrast}), in which the coefficients in the inclusions pick up an additional term of order $1$ which is non-degenerate \textcolor{black}{for certain components} in certain directions. Examples of this are, for instance, linear elasticity with inclusions hard in compression and soft in shear or Maxwell's equations with high contrast in the electric permittivity. The introduction of partial high contrast leads to a constrained microscopic kinematics, which needs to be accounted for in the homogenisation process \cite{cooper1,maxwell}.  
 Typically, one needs additional assumptions on the coefficients to ensure the well-posedness of the corrector equation in the space of admissible microscopic two-scale limit fields. A general theory for high-contrast partially degenerate \textcolor{black}{systems of} PDEs in the periodic setting was developed by Kamotski and Smyshlyaev in \cite{KS2018}.
 
\

Given the above rich picture in the periodic setting alongside the recent developments \cite{CCV1,CCV2,decay} for operators with random coefficients, it is very natural and timely to ask what new phenomena are brought about by allowing high-contrast \emph{systems} to have \emph{random} coefficients. Due to the additional freedom given by the random setting, one could examine scenarios precluded in the periodic or scalar settings, such as having a mixture of fully degenerate and partially degenerate inclusions or allowing the direction of partial degeneracy to vary randomly from inclusion to inclusion, to name just a couple of examples. Moreover, it may be possible to devise random media with spectral properties highly desirable in applications, such as eigenvalues embedded into the weak spectral bands.

In the current paper we carry out the first step of this programme, setting out a general homogenisation and spectral theory for high-contrast systems with random coefficients, thus laying firm theoretical foundations for a more specialised future analysis of the range of problems outlined above. In particular, random systems with partial degeneracy will be the subject of a separate paper.

\subsection*{Structure of the paper}
\addcontentsline{toc}{subsection}{Structure of the paper}

Our paper is structured as follows.

\

In Section~\ref{Statement of the problem} we introduce our geometric and probabilistic settings, and outline the problem we intend to study. In the end of Section~\ref{Statement of the problem} we provide a list of symbols and notation recurring throughout the paper, with precise pointers to where each of them is defined, to facilitate the reading.

In Section~\ref{Main results} we summarise the main results of our paper, with forward references to the key theorems and propositions, and elaborate on the novelty and differences with respect to the existing literature.

Section~\ref{The homogenised operator and its spectrum} is concerned with the examination of the two-scale limiting operator $\A^\mathrm{hom}$. After defining it, we study its spectrum and the relation thereof with the spectrum of $\A^\epsilon$.

In Section~\ref{Spectral convergence} we study the limiting spectrum $\lim_{\epsilon\to 0}\sigma(\A^\epsilon)$. In Subsection~\ref{Upper bound for the limiting spectrum} we prove an \textcolor{black}{outer} bound in terms of a certain set $\mathcal{G}$, which is shown in Subsection~\ref{Lower bound on the limiting spectrum} to be also an \textcolor{black}{inner} bound, under additional assumptions \textcolor{black}{on the finiteness of the range of dependence}.

Section~\ref{Examples} is devoted to several explicit examples, illustrating concretely the role of the various quantities appearing in Sections~\ref{The homogenised operator and its spectrum} and~\ref{Spectral convergence}, and showcasing the potential of the stochastic setting for high-contrast systems, compared to the periodic case.

The paper is complemented by one appendix, Appendix~\ref{Appendix auxiliary materials}, containing three sub-appendices with auxiliary technical material.

\section{Statement of the problem}
\label{Statement of the problem}

\subsection{Probabilistic and geometric setting}
\label{Probabilistic and geometric setting}

We work in Euclidean space $\mathbb{R}^d$, $d\ge 2$, equipped with the Lebesgue measure.  Given a measurable set $A\subset \R^d$, we denote by $|A|$ its Lebesgue measure, by $\overline{A}$ its closure, by $\mathbbm{1}_A$ its characteristic function. We adopt the notation
\begin{eqnarray}
\label{square x L}
   \square_x^L:=[-L/2, L/2]^d+x, \qquad x\in \R^d,  \ L>0. 
\end{eqnarray}
 We define a cut-off function $\eta\in C_0^{\infty}(\square_0^1)$ such that 
\begin{equation}\label{def eta}
0\leq \eta(x)\leq 1 \quad \mbox{ and } \quad \left. \eta\right|_{\square_0^{1/2}} =1,
\end{equation}
 and put
 \begin{equation}\label{def eta_L}
 	\eta_L(x):=\eta(x/L).
 \end{equation}
Clearly, we have 
\begin{equation}
	\label{bound grad eta}
	|\nabla \eta_L(x)|\le \frac{C}{L}, \quad \forall x\in \R^d.
\end{equation}

Function spaces with values in $\mathbb{R}^d$ will be denoted by bold letters. For example,
\begin{eqnarray}
\label{vector valued function spaces}
    \mathbf{L}^2(\R^d):=[L^2(\R^d)]^d, \qquad \mathbf{W}^{1,p}(\R^d):=[W^{1,p}(\R^d)]^d, \qquad \mathbf{C}_0^\infty(\R^d):=[C_0^\infty(\R^d)]^d\,.
\end{eqnarray}

Here $W^{k,p}(\R^d)$ is the usual Sobolev space of functions $p$-integrable together with \textcolor{black}{all} their first $k$ partial derivatives. Similarly, we shall denote by bold letters vector-valued functions. In the notation for norms, we will often write $\|\cdot\|_{\Lb^p}:=\|\cdot\|_{\Lb^p(\R^d)}$ and $\|\cdot\|_{\W^{1,p}}:=\|\cdot\|_{\W^{1,p}(\R^d)}$. Furthermore, we will write $\mathbf{H}^k(\R^d):=\W^{k,2}(\R^d)$.




 \

Our probability space $(\Omega, \mathcal{F}, P)$ is defined as follows.

\

We define $\Omega$ to be the set of all possible collections of randomly distributed {\it inclusions} in $\R^d$, that is, elements $\omega\in \Omega$ are subsets of $\R^d$ satisfying appropriate geometric conditions. Namely, we essentially require that individual inclusions are sufficiently regular, of comparable size and that they do not come too close to one another. These assumptions, rigorously formalised below as Assumptions~\ref{main assumption} and~\ref{main assumption 2}, mainly serve two purposes: on the one hand, they prevent the formation of clusters of inclusions (\emph{percolation}); on the other hand, they guarantee the validity of Korn--Sobolev extension theorems with uniform constants.

\color{black}

\begin{assumption}
\label{main assumption}
For all $\omega\in \Omega$
the set $\R^d \setminus \omega$ is connected, and $\omega$ can be written as a disjoint union
\begin{equation*}
\label{realisation as sum of inclusions}
\omega=\bigsqcup_{k\in \N} \omega^k
\end{equation*}
of sets $\omega^k$ (the inclusions) satisfying the following properties.
\begin{enumerate}
\item[(a)]
For every $k\in \N$ the set $\omega^k$ is open and connected.

\item[(b)]
$
\operatorname{diam} \omega^k <\frac12 
$.

\item[(c)]
There exists $\tau>0$, uniform in $\omega$, such that $\operatorname{dist}(\omega^k,\omega^j)>\tau$ for all $j\neq k$.
\end{enumerate}
\end{assumption}

%
%
%
%
%

%

%
\begin{assumption}
\label{main assumption 2}
There exist an integer $N\in \mathbb{N}$, pairs $\{(P_1^j, P_2^j), \ j=1,\dots, N\}$ of (open) Lipschitz domains in $\R^d$, and universal positive constants $c$, $C$ and $\tilde{C}$ with the following properties.
\begin{enumerate}[(i)]
\item
For all $j\in\{1,\dots, N\}$ we have
$\overline{P_1^j}\subset P_2^j$.

  \item For every $\omega \in \Omega$ and $k\in\mathbb{N}$, there exist 
\begin{enumerate}
\item     
$j=j(k)\in\{1,\dots, N\}$,
\item 
a bounded open neighbourhood $\widetilde{\B_\omega^k}\supset \omega^k$ and
\item 
a $C^2$ diffeomorphism $\varphi_k: P_{2}^j\to  \widetilde{\B_\omega^k}$ such that
\[
\varphi_k(\overline{P_1^j})=\overline{\omega^k},
\]
and
\[
c |x-y|\le |\varphi_k(x)-\varphi_k(y)|\le C |x-y|,  \qquad \|\varphi_{k}\|_{C^2(P_2^j)}\le \tilde{C}\,.
\]
\end{enumerate}        
\end{enumerate}
\end{assumption}

%

\begin{remark}
\label{remark on sufficiently regular}
Assumptions~\ref{main assumption} and~\ref{main assumption 2} warrant a number of remarks.
\begin{enumerate}[(i)]
\item
Assumption~\ref{main assumption} requires inclusions to be approximately of same size and at a ``safe'' uniform distance from one another. The restriction on the size of the inclusions is merely conventional and, at the same time,  unimportant: one can recover our setting from more general boundedness assumptions on the inclusions by elementary scaling arguments.

\item
Assumption \ref{main assumption 2} requires that all inclusions are $C^2$ diffeomorphic to a finite number of shapes, together with a small neighbourhood thereof, where the constants associated with these diffeomorphisms are uniformly bounded. Observe that Assumption \ref{main assumption 2} automatically implies that the inclusions are Lipschitz.


\item 
The papers \cite{CCV2, decay} feature an analogue of Assumption~\ref{main assumption} with Lipschitzness (see item (ii) above) replaced by the stronger assumption of minimal smoothness \cite[Chapter~VI,  Section~3.3]{stein}. Indeed, minimal smoothness was sufficient there to ensure the uniformity of the constants in the relevant inequalities needed in the proofs. In the current paper, Lipschitzness is enough for Theorem~\ref{theorem extension} for each individual inclusion.
However, since we will be dealing with symmetric gradients --- and, hence, Korn-type inequalities --- in order to have uniform (in $k$) constants of the extension operator stronger conditions (such as, e.g., Assumption~\ref{main assumption 2}) are needed to close the argument.

\item
Let us emphasise that our regularity assumptions on the inclusions are \emph{not} optimal. We need the regularity properties from Assumption~\ref{main assumption 2} in exactly two places: for the uniformity of Korn's constant given by Proposition~\ref{corollary uniform C}, and in the proof of Theorem~\ref{theorem higher regularity corrector} on the higher regularity of corrector, which, in turn, is used in the proof of Theorem~\ref{theorem G sub limiting spectrum} --- see~\eqref{proof part2 equation 10}, 
\eqref{proof part2 equation 29} and~\eqref{proof part2 equation 36}.

As far as the former is concerned, our assumptions are sufficient (although not necessary) to ensure the uniformity of the Korn constant on the domain. Note that Assumption~\ref{main assumption 2} is a considerably stronger than minimal smoothness assumption adopted for scalar operators in \cite{CCV2,decay} and, hence, takes us quite far from optimality.  We should like to point out, however, that the study of the dependence of Korn's inequality on the domain and its uniformity with respect to certain classes of domains is an active area of current research (see, e.g., the relatively recent result in \cite{griso} for star-shaped domains); in this perspective, pursuing optimal conditions for the uniformity of the Korn constant in our setting goes beyond the scope of the current paper.

As for the latter, in Theorem~\ref{theorem higher regularity corrector} one needs a bit more than just a uniform Korn constant. Indeed, by examining the argument in the  proof of Theorem~\ref{theorem higher regularity corrector} one realises that a uniform \emph{local} Korn inequality is required. Here is where the $C^2$ equivalence of the inclusions to a \emph{finite} number of shapes plays a role.
\end{enumerate}
\end{remark}

\color{black}

We define $\mathcal{F}$ to be the $\sigma$-algebra on $\Omega$ generated by the mappings $\pi_q:\Omega \to\{0,1\}$, where $q \in \mathbb{Q}^d$ and 
\begin{equation*}
\label{map pi}
\pi_q(\omega):=\mathbbm{1}_{\omega} (q). 
\end{equation*}
That is, $\mathcal{F}$ is the smallest $\sigma$-algebra that makes the mappings $\pi_q$, $q \in \mathbb{Q}^d$, measurable\footnote{Under Assumption~\ref{main assumption}, it is easy to see that this implies that the mappings $\pi_x:\Omega \to\{0,1\}$ for $x \in \mathbb{R}^d$ are also measurable. We postulate it only for $q \in \mathbb{Q}^d$ to ensure that $\mathcal{F}$ is countably generated, which in turn implies that the spaces $L^p(\Omega)$ are separable for $1\leq p< \infty$.}.
There is a natural group action of $\R^d$ on $\Omega$, and hence on $\mathcal{F}$, given by translations. Namely, for every $y\in \R^d$ the translation map $T_y:\Omega\to \Omega$ acts on $\omega\in \Omega$ as
\begin{equation}
\label{T_y}
\omega\mapsto T_y\omega=\{z-y \ | \ z\in \omega\}\subset \R^d.
\end{equation}
\textcolor{black}{Observe that $\Omega$ is, by definition, invariant with respect to all translations in $\R^d$, hence the above map is well defined.}

We equip $(\Omega,\mathcal{F})$ with a probability measure $P$ assumed to be invariant under translations, i.e., we assume that $P(T_y F)=P(F)$ for every $F\in \mathcal{F}$ and $y\in \R^d$, where $T_y F:=\bigcup_{\omega\in F}T_y \omega$.
It is not hard to see that $(T_y)_{y \in \R^d}$  satisfies the following properties:
	\begin{enumerate}[(a)]
	\item $T_{y_1} \circ T_{y_2}=T_{y_1+y_2}\ $ for all $y_1,y_2 \in \R^d$, where $\circ$ stands for composition;
	\item the map $\mathcal{T}: \R^d \times\Omega \to \Omega,\ $ $(y, \omega)\to T_y \omega$ is measurable with respect to the standard $\sigma$-algebra on the product space induced by $\mathcal{F}$ and the Borel $\sigma$-algebra on $\R^d$.
\end{enumerate} 

Finally, we assume the translation group action $(T_y)_{y\in \R^d}$ to be ergodic, i.e. if an element $F\in\mathcal{F}$ satisfies
\[
P((T_yF\cup F)\setminus (T_yF \cap F))=0 \quad \text{for all}\quad y\in\R^d,
\]
then $P(F)\in \{0,1\}$. This will allow us to use the classical Ergodic Theorem, which we recall as Theorem~\ref{ergodic theorem} in Appendix~\ref{The ergodic theorem}, for the reader's convenience.

\textcolor{black}{Given $\overline{\mathbf{f}}:\Omega \to \R^d$,} we adopt the standard notation
\begin{eqnarray}
\label{expectation}
\E[\overline{\mathbf{f}}]:=\int_\Omega \overline{\mathbf{f}}(\omega)\, dP(\omega)
\end{eqnarray}
and we denote by $\Lb^p(\Omega)=[L^p(\Omega)]^d$ the spaces of vector-valued $p$-integrable functions in $(\Omega, \mathcal{F},P)$. Since $\mathcal{F}$ is, clearly, countably generated, $\Lb^p(\Omega)$, $1\le p<\infty$, is separable. We denote by $\mathbf{f}(y,\omega):=\overline{\mathbf{f}}(T_y\omega)$ the \emph{realisation} or \emph{stationary extension} \textcolor{black}{of $\overline{\mathbf{f}}$}. Note that if $\overline{\mathbf{f}}\in \Lb^p(\Omega)$, then $\mathbf{f}\in L^p_\mathrm{loc}(\R^d;\Lb^p(\Omega))$ \cite[Chapter~7]{ZKO}. In the current paper we are mostly concerned with the case $p=2$.

\begin{notation}
Throughout our paper, unless otherwise stated, we will denote with an overline functions on $\Omega$ and we will remove the overline to denote the corresponding realisation (stationary extension). We will reserve the letter $y$ for the (stationary) extension variable. 
So, for example:
\begin{eqnarray}
\label{stationary extension}
    \overline{\mathbf{f}}=\overline{\mathbf{f}}(\omega), \quad \mathbf{f}=\mathbf{f}(y,\omega):=\overline{\mathbf{f}}(T_y\omega), \quad \E[\mathbf{f}]:=\E[\overline{\mathbf{f}}].
\end{eqnarray}
Functions on $\Omega$ may additionally depend on other variables, not necessarily in a stationary manner. For example, if $\overline{\mathbf{f}}(x,\omega):\R^d \times \Omega \to \R^d$, then $\mathbf{f}(x,y,\omega):=\overline{\mathbf{f}}(x,T_y\omega)$ and $\E[\mathbf{f}]=\E[\overline{\mathbf{f}}(x,\cdot)]$ (note that the dependence on the non-stationary variable $x$ remains upon taking the expectation).
\end{notation}

Let $\{e_j\}_{j=1}^d$ be the standard basis of $\R^d$. 
We define the Sobolev spaces $\Hb^s(\Omega)$, $s\in \N$, as
\begin{equation}
\label{definition Ws2}
\Hb^s(\Omega):=\left\{\overline{\bdf}\in \Lb^2(\Omega)\ | \ \bdf\in H^s_\mathrm{loc}(\R^d;\Lb^2(\Omega))\right\}.
\end{equation}

\color{black}
We should like to emphasise that, given $\overline{\bdf}\in \Hb^s(\Omega)$, for every multi-index $\boldsymbol{\alpha}=(\alpha_1,\ldots,\alpha_d)\in \N_0^d$, $\sum_{l=1}^d\alpha_l\le s$, the quantity $\partial^{\boldsymbol{\alpha}}\bdf$ is the stationary extension of a vector field in $\Lb^2(\Omega)$. The quantity $\overline{\partial^{\boldsymbol{\alpha}}\bdf}$ is to be understood as the random variable whose stationary extension is
$\partial^{\boldsymbol{\alpha}}\bdf$.
  Here $\partial^{\boldsymbol{\alpha}}:=\partial^{\alpha_1}_{y_1}\cdots \partial^{\alpha_d}_{y_d}$.

  In view of the above one defines the norm on \eqref{definition Ws2} as 
  \begin{equation*}
  	\left\|\overline{\bdf}\right\|^2_{\Hb^s(\Omega)} : = \sum_{|\boldsymbol{\alpha}|\leq s} \left\|\overline{\partial^{\boldsymbol{\alpha}} \bdf}\right\|^2_{\Lb^2(\Omega)}.
  \end{equation*}
\color{black}

Furthermore, we define 
\begin{equation*}
\label{definition Winfty2}
\Hb^\infty(\Omega):=\bigcap_{s\in \N}\Hb^s(\Omega)
\end{equation*}
and
\begin{equation}
\label{definition Cinfty}
\Cb^\infty(\Omega):=\left\{\overline{\bdf}\in \Hb^\infty(\Omega)\ | \ \overline{\partial^{\boldsymbol{\alpha}}\bdf}\in \Lb^\infty(\Omega) \text{ for every multi-index } \boldsymbol{\alpha}=(\alpha_1,\ldots,\alpha_d)\in \N_0^d \right\}.
\end{equation}

\begin{remark}
Observe that the definition of probabilistic Sobolev spaces retraces the classical one.
We refrain from introducing the stationary differential calculus on $\Omega$ more formally, as it will not be needed in this paper. We refer the interested reader to \cite[Appendices~A.2 and~A.3]{DG} for further details.
\end{remark}

Finally, we define 
\begin{equation*}
\label{definition L2zero}
\Lb^{2}_0(\Omega):=\left\{\overline{\bdf}\in \Lb^{2}(\Omega)\ | \ \left.\bdf(\cdot, \omega)\right|_{\R^d\setminus \omega}=0 \ \text{for \textcolor{black}{a.e.}} \ \omega \in \Omega\right\},
\end{equation*}
\begin{equation*}
\label{definition Ws2zero}
\Hb^s_0(\Omega):=\left\{\overline{\bdf}\in \Hb^s(\Omega)\ | \ \left.\bdf(\cdot, \omega)\right|_{\R^d\setminus \omega}=0 \ \text{for \textcolor{black}{a.e.}} \ \omega \in \Omega\right\},
\end{equation*}
and
\begin{equation*}
\label{definition Cinfinityzero}
\Cb^\infty_0(\Omega):=\left\{\overline{\bdf}\in \Cb^\infty(\Omega)\ | \ \left.\bdf(\cdot, \omega)\right|_{\R^d\setminus \omega}=0 \ \text{for \textcolor{black}{a.e.}} \ \omega \in \Omega\right\}.
\end{equation*}
The latter are spaces of functions in $\Lb^2(\Omega)$, $\Hb^s(\Omega)$ and $\Cb^\infty(\Omega)$, respectively, whose realisations vanish identically outside of the inclusions.

The above function spaces enjoy the following properties:
(i) $\Hb^s(\Omega)$ is a separable Hilbert space;
(ii) $\Hb^\infty(\Omega)$ is dense in $\Lb^2(\Omega)$;
(iii) $\Cb^\infty(\Omega)$ is dense in $\Lb^p(\Omega)$, $1\le p<\infty$;
(iv) $\Cb^\infty(\Omega)$ is dense in $\Hb^s(\Omega)$. Furthermore, we have at our disposal the following result, whose proof may be found in \cite{CCV1,CCV2}.

\begin{theorem}
\label{density theorem}
Under Assumptions~\ref{main assumption} \textcolor{black}{and \ref{main assumption 2}}, $\Cb_0^\infty(\Omega)$ is dense in $\Lb_0^2(\Omega)$ and in $\Hb^1_0(\Omega)$ with respect to $\|\cdot\|_{\Lb^2(\Omega)}$ and $\|\cdot\|_{\Hb^1(\Omega)}$, respectively.
\end{theorem}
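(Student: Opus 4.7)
The plan is to approximate any $\overline{\bdf} \in \Lb_0^2(\Omega)$ (respectively $\Hb_0^1(\Omega)$) by a three-parameter family $\overline{\bpsi}_{M,\delta,\e} \in \Cb_0^\infty(\Omega)$ obtained, in order, through: (i) a pointwise truncation at level $M$ to enforce an $\Lb^\infty(\Omega)$ bound; (ii) a smooth cut-off supported strictly inside the inclusions at distance at least $\delta$ from $\partial \omega$; and (iii) a stationary mollification with a $C^\infty$ mollifier $\rho_\e$ of radius $\e < \delta/2$. The parameters will be sent to their limits in the order $\e \to 0$, $\delta \to 0$, $M \to \infty$, and a diagonal extraction will close the argument.

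More concretely, given $\overline{\bdf}$ I first set $\overline{\bdf}_M := T_M(\overline{\bdf})$ with $T_M$ a smooth componentwise clipping, so that the chain rule $\nabla T_M(\bdf) = T_M'(\bdf) \nabla \bdf$ keeps us inside $\Hb^1$; then, letting $\chi_\delta \in C^\infty(\R)$ be a non-decreasing cut-off vanishing on $[0,\delta]$ and equal to $1$ on $[2\delta,\infty)$, I put
\begin{equation*}
\overline{\bdf}_{M,\delta}(\omega) := \overline{\bdf}_M(\omega)\, \chi_\delta\bigl(\dist(0, \R^d \setminus \omega)\bigr),
\end{equation*}
whose stationary extension is $\bdf_M(y,\omega)\, \chi_\delta(\dist(y, \R^d \setminus \omega))$ and whose support lies in the $\delta$-interior of $\omega$. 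Finally, I take the stationary mollification
\begin{equation*}
\overline{\bpsi}_{M,\delta,\e}(\omega) := \int_{\R^d} \rho_\e(z)\, \overline{\bdf}_{M,\delta}(T_{-z}\omega)\, \dr z,
\end{equation*}
whose stationary extension is the classical convolution $\rho_\e * \bdf_{M,\delta}(\cdot,\omega)$. Because $\bdf_{M,\delta}$ vanishes within distance $\delta$ of $\R^d\setminus\omega$ while $\e < \delta/2$, the convolution still vanishes outside $\omega$; moreover it is $C^\infty$ in $y$ with derivatives bounded in $\Lb^\infty(\Omega)$ by $\|\bdf_{M,\delta}\|_{\Lb^\infty}\|\partial^{\boldsymbol{\alpha}}\rho_\e\|_{L^1}$. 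Hence $\overline{\bpsi}_{M,\delta,\e} \in \Cb_0^\infty(\Omega)$, as required.

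For the $\Lb_0^2(\Omega)$ density the three limits are then essentially routine: the $\e\to 0$ step reduces, via Jensen's inequality, to $\Lb^2$-continuity of the translation group $(T_y)_{y\in \R^d}$; the $\delta\to 0$ step uses that, almost surely on the event $\{\overline{\bdf}\neq 0\}$, the origin lies in an open inclusion (so $\dist(0,\R^d\setminus\omega)>0$), combined with dominated convergence; the $M\to \infty$ step is again dominated convergence.

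The main obstacle is the $\delta\to 0$ limit in the $\Hb_0^1(\Omega)$ setting. Indeed, $\nabla\bigl( \chi_\delta(\dist(\cdot,\R^d\setminus\omega)) \bigr)$ has magnitude $C/\delta$ and is supported on the thin shell $\{\delta \le \dist(\cdot,\partial\omega) \le 2\delta\}$, so Leibniz produces an extra term whose $\Lb^2$ norm is a priori controlled only by $\frac{C}{\delta}\, \|\bdf_M \mathbbm{1}_{\{\dist(\cdot,\partial\omega) \le 2\delta\}}\|_{\Lb^2}$. To absorb the diverging $1/\delta$ factor I would invoke a Hardy inequality on each Lipschitz inclusion $\omega^k$: since $\bdf_M$ has vanishing trace on $\partial \omega^k$ (being extended by zero), one has $\|\bdf_M/\dist(\cdot,\partial\omega^k)\|_{L^2(\omega^k)} \le C\,\|\nabla \bdf_M\|_{L^2(\omega^k)}$ with a constant uniform in $k$ thanks to the geometric uniformity afforded by Assumption~\ref{main assumption} --- this is precisely where the Lipschitz regularity of the inclusions is decisive. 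Summing over inclusions and taking expectation via stationarity and Fubini bounds the problematic term by $C\,\|\nabla\bdf_M \mathbbm{1}_{\{\dist(\cdot,\partial\omega)\le 2\delta\}}\|_{\Lb^2}$, which vanishes as $\delta\to 0$ by dominated convergence, and a diagonal argument concludes the proof.
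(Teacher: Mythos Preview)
The paper does not prove this theorem in-text; it only records that the proof ``may be found in \cite{CCV1,CCV2}''. Your three-step scheme (truncation, multiplication by a smooth cut-off in $\dist(\cdot,\R^d\setminus\omega)$, stationary mollification) is correct and is the standard route, matching what those references carry out in the scalar setting.

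Two small remarks on the $\Hb^1_0$ case. First, the bound you write for the Leibniz cross-term, namely $C\,\|\nabla\bdf_M\,\mathbbm{1}_{\{\dist\le 2\delta\}}\|_{\Lb^2}$, presupposes a \emph{localised} Hardy inequality on the shell $\{\dist\le 2\delta\}$; this does hold on Lipschitz domains with constant independent of $\delta$, but a slightly cleaner alternative is to use the global Hardy bound $\E[|\bdf_M|^2/\dist^2]\le C\,\E[|\nabla\bdf_M|^2]$ once and then apply dominated convergence to $\E\bigl[(|\bdf_M|^2/\dist^2)\,\mathbbm{1}_{\{\dist\le 2\delta\}}\bigr]$. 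Second, the uniformity in $k$ of the Hardy constant requires uniform Lipschitz character of the $\omega^k$; this is the intended content of Assumption~\ref{main assumption} (witness the declared but otherwise unused constants $\zeta,N,M$, which in \cite{CCV2,decay} are the minimal-smoothness parameters --- see Remark~\ref{remark on sufficiently regular}(ii)), though the present text leaves it somewhat implicit.
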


The underlying motivation to much of our geometric assumptions is the fact that they are conducive to good ``extension properties''. \textcolor{black}{In what follows, $\sym \nabla \mathbf{u}:=\frac{\nabla \mathbf{u}+(\nabla \mathbf{u})^T}{2}$ denotes the symmetrised gradient of the vector-function $\mathbf{u}$.}

\begin{theorem}[Extension theorem {\cite[Lemma~4.1]{OSY}}]
\label{theorem extension}
Let $\C_0$ be a constant fourth-order tensor satisfying the symmetry and ellipticity conditions \eqref{eq:Csymmetries} and \eqref{ellipticity} below.
Under Assumption~\ref{main assumption},
for every $p\ge1$ there exists a bounded linear extension operator $E_k:\mathbf{W}^{1,p}(\B_\omega^k\setminus \omega^k) \to \mathbf{W}^{1,p}(\B_\omega^k)$, \textcolor{black}{with $\overline{\omega^k}\subset
\B_\omega^k
\subset
\overline{\B_\omega^k}\subset \tilde{\B_\omega^k}$ and $\B_\omega^k$ open and Lipschitz}, such that for every $\bu\in \mathbf{W}^{1,p}(\B_\omega^k\setminus \omega^k)$ the extension $\tilde{\bu}=E_k \bu\in \mathbf{W}^{1,p}(\B_\omega^k)$ satisfies
\begin{equation}
\label{extension property 1}
 \tilde\bu=\bu \qquad \text{in}\qquad \B_\omega^k\setminus \omega^k,
\end{equation}
\begin{equation}
\label{extension property 2}
\operatorname{div}\C_0\nabla \tilde \bu=0 \qquad \text{in}\qquad \omega^k,
\end{equation}
\begin{equation}
\label{extension property 4}
\| \tilde \bu \|_{\mathbf{W}^{1,p}(\B_\omega^k)} \le C\,\left(
\|\bu\|_{\Lb^p(\B_\omega^k\setminus \omega^k)}
+
\|\sym \nabla \bu \|_{\Lb^p(\B_\omega^k\setminus \omega^k)}
\right),
\end{equation}
\begin{equation}
\label{extension property 6}
\| \sym \nabla \tilde \bu \|_{\Lb^p(\B_\omega^k)} \le C\,\|\sym \nabla \bu \|_{\Lb^p(\B_\omega^k\setminus \omega^k)},
\end{equation}
where the constants $C$ in \eqref{extension property 4}, \eqref{extension property 6} depend on $\omega$, $k$, $p$, and the ellipticity constant of $\C_0$.
\end{theorem}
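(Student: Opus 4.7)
The plan is to define $\tilde\bu$ inside $\omega^k$ as the unique solution $\bv$ of the homogeneous elasticity Dirichlet problem
\begin{equation*}
\dive(\C_0 \nabla \bv) = 0 \text{ in } \omega^k, \qquad \bv = \bu \text{ on } \partial \omega^k,
\end{equation*}
and to keep $\tilde\bu = \bu$ on $\B_\omega^k \setminus \omega^k$. By construction this yields \eqref{extension property 1} and \eqref{extension property 2}, and the matching of traces guarantees $\tilde\bu \in \W^{1,p}(\B_\omega^k)$. Since Assumption~\ref{main assumption}(b)(ii) makes $\omega^k$ a bounded Lipschitz domain, the trace of $\bu$ on $\partial \omega^k$ is a well-defined element of $\W^{1-1/p,p}(\partial \omega^k)$; well-posedness of the Dirichlet problem then follows, for $p=2$, from Lax--Milgram combined with Korn's second inequality and the ellipticity of $\C_0$, and for general $p$ from the classical $\W^{1,p}$-regularity theory of elliptic systems with constant coefficients on Lipschitz domains.

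The main estimate is \eqref{extension property 6}, which demands control of $\sym \nabla \tilde\bu$ inside $\omega^k$ purely in terms of $\sym \nabla \bu$ outside. The key idea is to subtract the best rigid-motion approximation before extending, exploiting the fact that rigid motions lie in the kernel of $\sym \nabla$ and, thanks to the symmetry of $\C_0$, are themselves solutions of the elasticity PDE. Let $\mathbf{r}$ denote the $\Lb^2$-projection of $\bu$ onto the (finite-dimensional) space of infinitesimal rigid motions on $\B_\omega^k \setminus \omega^k$. Korn's second inequality on this Lipschitz domain yields
\begin{equation*}
\|\bu - \mathbf{r}\|_{\W^{1,p}(\B_\omega^k \setminus \omega^k)} \leq C\,\|\sym \nabla \bu\|_{\Lb^p(\B_\omega^k \setminus \omega^k)}.
\end{equation*}
A Stein-type extension then produces $\bu^\sharp \in \W^{1,p}(\B_\omega^k)$ with $\bu^\sharp = \bu - \mathbf{r}$ on $\B_\omega^k \setminus \omega^k$ and $\|\bu^\sharp\|_{\W^{1,p}(\B_\omega^k)} \leq C\|\sym \nabla \bu\|_{\Lb^p(\B_\omega^k \setminus \omega^k)}$. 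Extending $\mathbf{r}$ to all of $\B_\omega^k$ as the same rigid motion, the sum $\bu^\sharp + \mathbf{r}$ becomes a valid competitor for $\bv$ in the variational formulation; since $\sym \nabla \mathbf{r} = 0$, the energy minimality of $\bv$ combined with the ellipticity of $\C_0$ gives
\begin{equation*}
\|\sym \nabla \bv\|_{\Lb^p(\omega^k)} \leq C\,\|\sym \nabla \bu^\sharp\|_{\Lb^p(\omega^k)} \leq C\,\|\sym \nabla \bu\|_{\Lb^p(\B_\omega^k \setminus \omega^k)},
\end{equation*}
which, combined with the trivial identity on $\B_\omega^k \setminus \omega^k$, yields \eqref{extension property 6}. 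Estimate \eqref{extension property 4} is obtained analogously by invoking the full $\W^{1,p}$-estimate $\|\bv\|_{\W^{1,p}(\omega^k)} \leq C\|\bu\|_{\W^{1,p}(\B_\omega^k\setminus\omega^k)}$ for the Dirichlet problem and then absorbing $\|\nabla \bu\|_{\Lb^p}$ on the right-hand side into $\|\bu\|_{\Lb^p} + \|\sym \nabla \bu\|_{\Lb^p}$ via Korn's inequality.

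The only genuine technical obstacle is passing from $p=2$ to general $p \neq 2$: the variational minimality characterisation of $\bv$ is no longer directly available, and one must either invoke Meyers' higher-integrability (which handles $p$ in a small neighbourhood of $2$) or the $L^p$-regularity theory for elasticity systems with constant coefficients on Lipschitz domains. Note, however, that the dependence of $C$ on $\omega$ and $k$ is immaterial at this stage: uniform control across all inclusions, which requires the stronger geometric Assumption~\ref{main assumption 2}, will only become necessary later in the paper.
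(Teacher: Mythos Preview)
The paper does not supply its own proof of this theorem; it is stated with a direct citation to \cite[Lemma~4.1]{OSY} (Oleinik--Shamaev--Yosifian) and used as a black box thereafter. Your sketch follows precisely the standard construction found in that reference: define the extension as the $\C_0$-harmonic solution of the Dirichlet problem inside $\omega^k$, and obtain the symmetric-gradient estimate \eqref{extension property 6} by first subtracting the optimal rigid motion $\mathbf{r}$ (so that Korn's second inequality on the Lipschitz annulus $\B_\omega^k\setminus\omega^k$ converts control of $\sym\nabla\bu$ into full $\W^{1,p}$-control of $\bu-\mathbf{r}$), then invoking well-posedness of the Dirichlet problem for $\bv-\mathbf{r}$. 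Your identification of the passage from $p=2$ to general $p$ as the only genuine technical point is accurate, and your closing remark---that uniformity of the constants in $\omega$ and $k$ is not claimed here but deferred to the stronger Assumption~\ref{main assumption 2}---matches exactly how the paper proceeds (see Proposition~\ref{proposition uniform C}).
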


In plain English, the above extension theorem tells us that we can extend $\Hb^1$ vector-functions into the inclusions in a $\C_0$-harmonic manner, whilst controlling gradient and symmetric gradient of the extended function by gradient and symmetric gradient of the original function.

\begin{proposition}
\label{proposition uniform C}
Under Assumption~\ref{main assumption 2}, the constants $C$ in \eqref{extension property 4}, \eqref{extension property 6} is independent of $\omega$ and $k$ almost surely. \textcolor{black}{Furthermore, $\mathcal{B}_\omega^k$ can be chosen in such a way that $\operatorname{dist}(\mathcal{B}_\omega^k,\mathcal{B}_\omega^j)>\tau'$ for all $j\neq k$ and uniform (in $\omega$) $\tau'>0$.}
\end{proposition}
\label{corollary uniform C}

\begin{proof}
The claim is obtained arguing by contradiction and retracing the steps of the proof \cite[Lemma~1]{velcic} with account of \cite[Remark~6]{velcic}. \color{black}See also~\cite{HMT} for the treatment of transformations of Lipschitz domains under sufficiently smooth mappings.
\textcolor{black}{The second part of the statement follows at once from Assumption~\ref{main assumption 2}.}
\end{proof}

\begin{remark}
The constants $C$ in \eqref{extension property 4}, \eqref{extension property 6} depends on $p$. However, in our paper we will only use Theorem~\ref{theorem extension} for $p=2$ and for one $p>2$. Therefore, for all practical purposes we can suppress the dependence of $C$ on $p$, and treat the constant as if it were uniform with respect to $p$.
\end{remark}

\subsection{Our mathematical model}

Let $\C_j\in \R^{d^4}$, $j=0,1$,  be (constant) fourth-order tensors satisfying the following properties:
\begin{enumerate}[(a)]
\item
\emph{Symmetry}: 
\begin{equation}
\label{eq:Csymmetries}
(\C_j)_{\alpha\beta\mu\nu}=(\C_j)_{\mu\nu\alpha\beta}, \qquad (\C_j)_{\alpha\beta\mu\nu}=(\C_j)_{\alpha\beta\nu\mu},
\end{equation}
for all $\alpha,\beta,\mu,\nu=1,2,\ldots, d$;


\item
\emph{Ellipticity}:  there exists $c>0$ such that
\begin{equation}
\label{ellipticity}
(\C_j)_{\alpha\beta\mu\nu} \xi_{\alpha\beta}\xi_{\mu\nu}\ge c\,  |\xi|^2, \qquad \forall\xi \in \R^{d^2}, \ \xi_{\alpha\beta}=\xi_{\beta\alpha}, \qquad j=0,1.
\end{equation}
Here and further on, we adopt Einstein's summation convention over repeated indices, unless otherwise stated.
\end{enumerate}

\color{black}
\begin{remark}
\label{remark linear elasticity}
Conditions \eqref{eq:Csymmetries} and \eqref{ellipticity} are traditionally interpreted as a model of linear elasticity in dimension $d$.
\end{remark}
\color{black}

For every $0<\varepsilon<1$, for each random set of inclusions $\omega$ we partition $\R^d$ into two regions, up to a set of measure zero: the \emph{inclusions}
\begin{equation}
\label{inclusions epsilon}
\I^\epsilon(\omega):=\epsilon \omega
\end{equation}
and the \emph{matrix}
\begin{equation}
\label{matrix epsilon}
\M^\epsilon(\omega):=\R^d \setminus \overline{\I^\epsilon(\omega)}.
\end{equation}

%

%

We define the high-contrast  tensor field of coefficients as
\begin{equation}
\label{coefficients}
\C^\e = \C^\e(x,\o):=\mathbbm{1}_{\R^d\setminus\e\omega}\,\C_1+\, \mathbbm{1}_{\e\omega}\,\epsilon^2 \C_0.
\end{equation}
Here the characteristic size of the microstructure is of order $\e$, and the coefficients equal to $\C_1$ in the stiff matrix and to $\epsilon^2 \,\C_0$ in the soft inclusions. This type of scaling is often referred to as `double-porosity'. 

\begin{definition}
\label{def: operator A epsilon}
We define $\A^\epsilon(\omega)$ to be the self-adjoint linear operator in $\Lb^2(\R^d)$ associated with the bilinear form\footnote{Here and further on
\[
\C^\epsilon \nabla \bu \,\cdot \nabla \mathbf{v}= (\C^\epsilon)_{\alpha\beta\mu\nu} \partial_\mu \bu_\nu \, \partial_\alpha \mathbf{v}_\beta.
\]
}
\begin{equation}\label{form of the operator}
\int_{\R^d} \C^\epsilon \nabla \bu \,\cdot \nabla \mathbf{v}=\int_{\R^d} \C^\epsilon (\sym \nabla \bu) \,\cdot (\sym \nabla \bv),\, \qquad \bu,\bv \in \Hb^1(\R^d).
\end{equation}
\end{definition}

The operator $\A^\epsilon$ is a matrix operator acting on vector fields.

\

The overall goal of the paper is to examine the spectral properties of the operator $\mathcal{A}^\epsilon$ for sufficiently small $\epsilon$, through the prism of (two-scale) stochastic homogenisation.

\begin{remark}
\label{remark generalising}
Our setting can be generalised in a straightforward manner to cover the following.
\begin{enumerate}[(i)]
\item
One can consider operators acting on sections of more general trivial vector bundles $[L^2(\R^d)]^{m}$,  $m\in \mathbb{N}$. When $m=1$, one recovers the scalar theory.

\item
The coefficients $\C_0$ and $\C_1$ can be allowed to depend on $\omega$,  provided the ellipticity constant in \eqref{ellipticity} is uniform in $\omega$, \textcolor{black}{and $\C_0$, $\C_1$ are uniformly bounded as functions of $\omega$.}

\item 
Throughout the paper, we set the material density \textcolor{black}{which would normally appear in a model of linear elasticity (see Remark~\ref{remark linear elasticity})} to be equal 1, for simplicity. Studying the problem at hand \textcolor{black}{with a more general} material density can be done analogously.
\end{enumerate}
Upgrading our results so as to encompass (i)--(iii) only involves minor adjustments to the proofs. However, we carry out our arguments in the slightly more restrictive setting set out above for the sake of clarity, to avoid inessential technical details.
\end{remark}

\subsection*{List of notation}
\addcontentsline{toc}{subsection}{List of notation}
\begin{longtable}{l l}
\hline
\\ [-1em]
\multicolumn{1}{c}{\textbf{Symbol}} & 
  \multicolumn{1}{c}{\textbf{Description}} \\ \\ [-1em]
 \hline \hline \\ [-1em]
$\overset{2}{\to}$ ($\wtto$) & Strong (resp.~weak) stochastic two-scale convergence --- Definition~\ref{definition stochastic two scale convergence}\\ \\ [-1em]
$\square_x^L$ & Hypercube of size $L$ centred at $x$, see \eqref{square x L}\\ \\ [-1em]
$\mathbbm{1}_A$ & Characteristic function of $A\subset \R^d$\\ \\ [-1em]
$1_\Omega$  & Random function \eqref{definition 1 Omega}\\ \\ [-1em]
$|A|$ & Lebesgue measure of $A\subset \R^d$ \\ \\ [-1em]
$\overline{A}$ & Closure of $A\subset \R^d$  \\ \\ [-1em]
$\A^\epsilon$ & Definition~\ref{def: operator A epsilon}  \\ \\ [-1em]
$\A^\mathrm{hom}$ & Limiting two-scale operator --- see~Definition~\ref{definition Ahom, A1, A0}  \\ \\ [-1em]
$\A_0$ & Microscopic part of $\A^\mathrm{hom}$ --- see~Definition~\ref{definition Ahom, A1, A0}  \\ \\ [-1em]
$\A_0^Q$ & Operator $\A_0$ in $\Lb^2(Q)$ with Dirichlet b.c. --- see~Definition~\ref{definition A^Q}  \\ \\ [-1em]
$\A_1$ & Macroscopic part of $\A^\mathrm{hom}$ --- see~Definition~\ref{definition Ahom, A1, A0}  \\ \\ [-1em]
$\overline{\mathbf{b}_\lambda}$ & Matrix function \eqref{definition of b}, \eqref{definition of b formula 2} \\ \\ [-1em]
$\overline{\mathbf{b}_\lambda^{(i)}}$ & $i$-th column of $\overline{\mathbf{b}_\lambda}$, see also~\ref{equation for bi} and~\eqref{3.18} \\ \\ [-1em]
$\mathcal{B}_\omega^k$ & Extension domain of $\omega^k$ --- see~Theorem~\ref{theorem extension} \\ \\ [-1em]
$\bbeta(\lambda)$ & Zhikov $\bbeta$-matrix \eqref{definition beta matrix} \\ \\ [-1em]
$\beta(\lambda)$ & Largest eigenvalue of $\bbeta(\lambda)$, see~\eqref{non-bold beta} \\ \\ [-1em]
$\beta_\infty(\lambda)$ & Deterministic function given by Definition~\ref{definition beta infinity} almost surely \\ \\ [-1em]
$\C_0$, $\C_1$ & Symmetric and elliptic fourth-order tensors, see \eqref{eq:Csymmetries}, \eqref{ellipticity} \\ \\ [-1em]
$\C^\epsilon$ & Tensor field of coefficients \eqref{coefficients} \\ \\ [-1em]
$\C^\mathrm{hom}$ & Homogenised tensor field of coefficients \eqref{definition Chom} \\ \\ [-1em]
$\mathbf{C}_0^\infty$, $\Hb^k$, $\mathbf{L}^2$, $\mathbf{W}^{1,p}$ & Bold letters denote functions spaces with values in $\R^d$, see~\eqref{vector valued function spaces} \\ \\ [-1em]
$\mathcal{D}(\A)$ & Domain of the opeartor $\A$ \\ \\ [-1em]
$\be_i$, $i\in\{1,\dots,d\}$ & Basis vectors in $\R^d$, $[\be_i]_j=\delta_{ij}$ \\ \\ [-1em]
$\E[\overline{\mathbf{f}}]$  & Expectation \eqref{expectation} \\ \\ [-1em]
$\eta$ and $\eta_L$ & Cut-off \eqref{def eta} and its scaled version \eqref{def eta_L}, respectively \\ \\ [-1em]
$f(y,\omega):=\overline{f}(T_y\omega)$ & Stationary extension \ref{stationary extension} of the random variable $\overline{f}$ \\ \\ [-1em]
$\mathcal{G}$ & Set \eqref{G}\\ \\ [-1em]
$\Hb$ & Function space \eqref{definition of the space H}\\ \\ [-1em]
$\Hb^1_{\mathrm{per}}(Q)$ & Elements of $\Hb^1$ that are periodic with period $Q$ \\  \\ [-1em]
$I$ & Identity matrix \\ \\ [-1em]
$\mathcal{I}^\epsilon$ & $\epsilon$-scaled inclusions \eqref{inclusions epsilon} \\ \\ [-1em]
$\mathcal{M}^\epsilon$ & $\epsilon$-scaled matrix \eqref{matrix epsilon} \\ \\ [-1em]
$\sigma(\A)$ & Spectrum of the operator $\A$ \\ \\ [-1em]
$T_y$ & Dynamical system \eqref{T_y} on $\Omega$ acting by translation \\ \\ [-1em]
$\Vb$ & Function space \eqref{definition of the space V}\\ \\ [-1em]
$\boldsymbol{\mathcal{V}}^2_\mathrm{pot}$ & Potential vector fields with zero mean \eqref{definiton of V_pot}\\ \\ [-1em]
$\omega$ & Collection of inclusions ($\omega\subset \R^d$, $\omega\in \Omega$) --- see~Assumption~\ref{main assumption} \\ \\ [-1em]
$\omega^k$ & Individual inclusion, connected component of $\omega$ --- see~Assumption~\ref{main assumption} \\ \\ [-1em]
$(\Omega, \mathcal{F}, P)$ & Probability space \\ \\ [-1em]
\hline
\end{longtable}

\section{Main results}
\label{Main results}

In this section we summarise, for the convenience of the reader, the main results of our paper.

\

Theorem~\ref{thm 3.3} establishes that the family of operators $\A^\epsilon$ converges \textcolor{black}{(that is, their resolvents converge in the sense of weak/strong stochastic two-scale convergence)}, as $\epsilon\to 0$, to a limiting operator $\A^\mathrm{hom}$ (see Definition~\ref{definition Ahom, A1, A0}). Our \textbf{first main result} is Theorem~\ref{propositon spectrum Ahom}, which characterises the spectrum of the limiting operator $\A^\mathrm{hom}$. The latter is comprised of two parts: the micro-resonances --- i.e., the spectrum of a microscopic limiting operator $\A_0$ --- and the nonnegative real numbers $\lambda$ for which the \emph{matrix} function $\bbeta(\lambda)$ \eqref{beta matrix} has at least one nonnegative eigenvalue. \textcolor{black}{Furthermore, as a consequence of the above convergence result, $\sigma(\mathcal{A}^\hom)$ is contained in the limiting spectrum $\lim_{\epsilon\to0}\sigma(\A^\epsilon)$ (Corollary~\ref{c 4.12}).}

\

Our \textbf{second main result}, Theorem~\ref{theorem limiting spectrum sub G}, provides an \textcolor{black}{outer} bound for the limiting spectrum $\lim_{\epsilon\to0}\sigma(\A^\epsilon)$ in terms of the set $\mathcal{G}$~\eqref{G}, in turn defined by a nonlocal \emph{scalar} function $\beta_\infty(\lambda)$, deterministic almost surely, which is sensitive to areas of space with atypical arrangements of inclusions.

\ 

As it turns out, under the additional assumption of finite-\textcolor{black}{range} \textcolor{black}{dependence}, $\mathcal{G}$ is also an \textcolor{black}{inner} bound for the limiting spectrum. This is the subject of our \textbf{third main result}, Theorem~\ref{theorem G sub limiting spectrum}, which, in conjunction with Theorem~\ref{theorem limiting spectrum sub G}, yields $\lim_{\epsilon\to0}\sigma(\A^\epsilon)=\mathcal{G}$. Since, in general, $\sigma(\A^\mathrm{hom})$ is a proper subset of $\mathcal{G}$, this implies one does not have Hausdorff convergence of spectra, so that the limiting operator $\A^\mathrm{hom}$ captures only partially information about the spectrum of $\A^\epsilon$ for arbitrary small but finite $\epsilon$. This is a distinctive feature of the stochastic setting, already observed in \cite{CCV2} in the scalar case, which persists for systems with random coefficients.

\

Lastly, our \textbf{fourth main result} is Proposition~\ref{proposition 1 example 1}, which, together with the other examples from Section~\ref{Examples}, showcases the wider range of applications and physical effects of stochastic systems, when compared with periodic ones, thus paving the way for the further analysis outlined in Section~1.

\

\color{black}
Let us emphasise that, although the strategy is broadly similar to recent works \cite{CCV1,CCV2} dealing with the scalar case, its technical implementation is not. For instance, in addition to the fact that one must work with a matrix --- as opposed to scalar --- ``macroscopic spectral parameter'' (the Zhikov $\beta$-matrix), it is not \emph{a priori} clear what the right quantity to characterise the set $\mathcal{G}$ for random systems would be, or how such quantity would be related to the stochastic $\beta$-matrix. As it turns our, the set $\mathcal{G}$ is described by a \emph{scalar} quantity even in the case of systems, which calls for a few new tricks to make the proofs work (e.g., in Theorem~\ref{theorem G sub limiting spectrum}). The differences between the scalar case and systems is also transparent from our set of examples. By means of these examples, we expose new spectral features, to be fully explored in subsequent works, which distinguish the setting of the present paper from either periodic high-contrast systems or stochastic high-contrast scalar problems. We refer the reader to Section~6 for further comments and insight.
\color{black}

\section{The limiting operator and its spectrum}
\label{The homogenised operator and its spectrum}

Consider the `two-scale' space
\begin{equation}
\label{definition of the space H}
\Hb:=\Lb^2(\R^d) +L^2(\R^d; \Lb^2_0(\Omega)),
\end{equation}
which is \textcolor{black}{a subspace of} $\Lb^2(\R^d; \Lb^2(\Omega))$ and \textcolor{black}{has as} dense subspace
\begin{equation}
\label{definition of the space V}
\Vb:=\Hb^1(\R^d) +L^2(\R^d; \Hb^1_0(\Omega))\,.
\end{equation}
The spaces $\Hb$ and $\Vb$ are comprised of vector functions of the form $\bu_1(x)+\overline{\bu}_0(x,\omega)$, where the stationary extension $\bu_0(x,y,\omega)$ of $\overline{\bu}_0(x,\omega)$ vanishes for $y$ outside of the set of inclusions $\omega$.

Let us recall that a tensor field $\mathbf{p}\in [\Lb^2_{\mathrm{loc}}(\R^d)]^d$ is said to be
\begin{itemize}
\itemsep.3em

\item
\emph{potential} if there exists $\boldsymbol{\varphi}\in \Hb^1_\mathrm{loc}(\R^d)$ such that $\mathbf{p}=\nabla \boldsymbol{\varphi}$;

\item
\emph{solenoidal} if 
\[
\int_{\R^d} \mathbf{p}\cdot \nabla \boldsymbol{\varphi} =0 \qquad \forall \boldsymbol{\varphi}  \in \Cb_0^\infty(\R^d).
\]
\end{itemize}
See, e.g., \cite{ZKO}, for further details.
By analogy, one defines a tensor field $\overline{ \boldsymbol{p}}\in [\Lb^2(\Omega)]^d$ to be potential (respectively solenoidal) if for a.e.~$\omega$ its stationary extension $y\mapsto \mathbf{p}(y,\omega)\in [\Lb^2_{\mathrm{loc}}(\R^d)]^d$ is such. The classical Weyl's decomposition theorem holds for tensor fields in $[\Lb^2(\Omega)]^d$ as well, see e.g. \cite[Chapter 12.7]{ZKO}, see also \cite[Theorem~2.14]{decay}.

Let $1_\Omega:\R^d\times \Omega \to \{0,1\}$ be the stationary random function defined in accordance with
\begin{equation}
\label{definition 1 Omega}
1_\Omega(y,\omega):=\mathbbm{1}_\omega(y)\,.
\end{equation}
Let $\C^\mathrm{hom}$ be the homogenised tensor of coefficients defined in accordance with
\begin{equation}
\label{definition Chom}
\C^\mathrm{hom}\xi \cdot \xi:=\inf_{\overline{\boldsymbol{\psi}}\in \boldsymbol{\mathcal{V}}^2_\mathrm{pot}(\Omega)} \E\left[  (1-1_\Omega)\,\C_1 (\boldsymbol{\psi}+\xi) \cdot (\boldsymbol{\psi}+\xi) \right], \quad \forall \xi \in \R^{d^2},
\end{equation}
where
\begin{equation}
\label{definiton of V_pot}
\boldsymbol{\mathcal{V}}^2_\mathrm{pot}(\Omega):=\{\overline{\bpsi}\in [\Lb^2(\Omega)]^d \ | \  \bpsi \text{ is potential and } \ \E[\overline{\bpsi}]=0\}\,.
\end{equation}
Consider the following bilinear forms:
\begin{equation}
	\label{3.6}
	\E\left[\C_0 \nabla_y \bu \cdot \nabla_y \bvarphi\right] \,,
	\qquad
	\overline{\bu}, \  \overline{\bvarphi}\in \Hb^1_0(\Omega),
\end{equation}
\begin{equation}
		\label{3.7}
	\int_{\R^d} \C^\mathrm{hom}\nabla \bu \cdot \nabla \bvarphi \,,
	\qquad
	{\bu}, \  {\bvarphi}\in \Hb^1(\R^d),
\end{equation}
and
\begin{equation}
	\label{quadratic form Ahom}
	\int_{\R^d} \C^\mathrm{hom}\nabla \bu_1 \cdot \nabla \bvarphi_1+\int_{\R^d} \E\left[\C_0 \nabla_y \bu_0 \cdot \nabla_y \bvarphi_0\right] \,,
	\qquad
	\bu_1+\overline{\bu_0}, \ \bvarphi_1+\overline{\bvarphi_0}\in \Vb\,.
\end{equation}
By the standard Korn's inequality  
and its direct analogue for the space  $\Hb^1_0(\Omega)$ we see that the above bilinear forms \textcolor{black}{(upon addition of the usual $L^2$ term in \eqref{3.7} and \eqref{quadratic form Ahom})} are coercive, hence, closed.

\begin{definition}[Limiting two-scale operators]
\label{definition Ahom, A1, A0}
We define the linear operators $\mathcal{A}_0$ in $\Lb^2_0(\Omega)$, $\mathcal{A}_1$ in $\Lb^2(\R^d)$, and $\mathcal{A}^\mathrm{hom}$  in $\Hb$ as the self-adjoint operators associated with the bilinear forms \eqref{3.6}, \eqref{3.7} and \eqref{quadratic form Ahom}, respectively. We will refer to the operators $\mathcal{A}_0$ and $\mathcal{A}_1$ as the micro- and macroscopic parts of the {\it limiting operator} $\mathcal{A}^\mathrm{hom}$.
\end{definition}

The operator  $\A_0$ can formally be identified with the operator $-\operatorname{div}_y \C_0 \nabla_y$ acting on the stationary extensions of the random variables from $\mathrm{dom}(\A_0)$. For later use, it is also convenient to introduce a ``restriction'' of this operator to a single inclusion $\o^k$ or, more generally, to a bounded domain in $\R^d$.
\begin{definition}
\label{definition A^Q}
Given a bounded domain $Q\subset \R^d$, we denote by $\A_0^{Q}$ the positive definite self-adjoint operator associated with the bilinear form
\begin{eqnarray*}
\label{definition operator A^Q equation}
\int_{Q} \C_0\,\nabla \bu \cdot \nabla \bv, \qquad \bu,\bv \in \Hb^1_0(Q)\,.
\end{eqnarray*}
\end{definition}
In other word, $\A_0^{Q}$ is nothing but the operator $-\operatorname{div} \C_0 \nabla$ in $\Lb^2(Q)$ with Dirichlet boundary conditions.

\

Let $\mathcal{C} \subset \Cb^\infty(\Omega)$ be a countable dense family of vector-functions in $\Lb^2(\Omega)$ (recall that the latter is separable)  and let $\Omega_t=\Omega_t(\mathcal{C})\subset \Omega$ be a set of probability one such that the claim of the Ergodic Theorem~\ref{ergodic theorem} holds for all $\omega \in \Omega_t$ and $\overline{\mathbf{f}}\in \mathcal{C}$. Elements of $\Omega_t$ are often referred to as \emph{typical}, hence the subscript "t".

\begin{definition}[Stochastic two-scale convergence \cite{ZP}]
\label{definition stochastic two scale convergence}
Let $\{\bu^\epsilon\}$ be a bounded sequence in $\Lb^2(\R^d)$. We say that $\{\bu^\epsilon\}$ weakly stochastically two-scale converges to $\overline{\bu}\in \Lb^2(\R^d\times \Omega)$ (for a given $\omega_0\in {\Omega_t}$) and write $\bu^\epsilon \overset{2}{\rightharpoonup} \overline{\bu}$ if
\begin{equation}
\label{definition stochastic two scale convergence equation 1}
\lim_{\epsilon\to 0}\int_{\R^d} \bu^\epsilon(x)\cdot \varphi(x)\mathbf{f}(x/\epsilon,\omega_0)\,dx=
\E\left[\int_{\R^d} \overline{\bu}\cdot \varphi\,\overline{\mathbf{f}} \right]
\quad \forall  \varphi\in C^\infty_0(\R^d),\,\overline{\mathbf{f}} \in {\mathcal{C}}.
\end{equation}
We say that $\{\bu^\epsilon\}$ strongly stochastically two-scale converges to $\overline{\bu}\in \Lb^2(\R^d\times \Omega)$ and write $\bu^\epsilon \overset{2}{\to} \overline{\bu}$ if it satisfies \eqref{definition stochastic two scale convergence equation 1} and 
\begin{equation*}
\label{definition stochastic two scale convergence equation 2}
\lim_{\epsilon\to0}\|\bu^\epsilon\|_{\Lb^2(\R^d)} =\|\overline{\bu}\|_{\Lb^2(\R^d\times \Omega)}.
\end{equation*}
\end{definition}
For the reader's convenience, basic properties of \textcolor{black}{stochastic} two-scale convergence are summarised in Appendix~\ref{appendix: two-scale}.

\begin{remark}
Observe that one can always add countably many elements to the family $\mathcal{C}$ in the above definition, provided one updates the choice of the set of full measure $\Omega_t$ accordingly. See also \cite{ZP, heida} for further details.
We will tacitly make use of this freedom throughout the paper. Indeed, one is always able to identify the limiting objects resulting from two-scale convergence that appear in our paper by testing the relevant weak identities with countable dense families of (vector-)functions with higher-than-$L^2$ --- typically, smooth --- regularity.
\end{remark}

\

The operator $\A^\mathrm{hom}$ is the stochastic two-scale limit of $\A^\epsilon$ as $\epsilon\to 0$ in the sense of the following theorem.

\begin{theorem}\label{thm 3.3}
Let $\lambda<0$ and let $\{\bdf^\epsilon\}\subset \Lb^2(\R^d)$, $\|\bdf^\epsilon\|_{\Lb^2}\le C$,  be a bounded sequence of square integrable functions.  Suppose $\bdf^\epsilon\wtto\bdf$ ($\bdf^\epsilon\overset{2}{\to}\bdf$). Then the solution $\bu^\epsilon$ of the resolvent problem
\begin{equation}
\label{resolvent problem for Aepsilon}
\A^\epsilon \bu^\epsilon-\lambda \bu^\epsilon=\bdf^\epsilon
\end{equation}
weakly (strongly) stochastically two-scale converges to some $\bu = \bu_1+\overline{\bu_0}\in V$,
$\bu^\epsilon \wtto \bu$ ($\bu^\epsilon \overset{2}{\to} \bu$),  almost surely. Furthermore, $\bu$ is the  solution of 
\begin{equation}
	\label{3.10}
	\A^\hom \bu-\lambda \bu=\textcolor{black}{\mathcal{P}_{\mathbf{H}}}\bdf\,,
\end{equation}
\textcolor{black}{
where $\mathcal{P}_{\mathbf{H}}$ is the orthogonal projection onto $\mathbf{H}$.}
\end{theorem}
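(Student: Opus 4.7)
The strategy is the classical three-step programme of stochastic two-scale homogenisation adapted to the double-porosity setting: \emph{a priori} estimates, two-scale compactness with identification of the limit structure, and passage to the limit in the weak formulation.

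\emph{Step~1 (a priori bounds and extension).} Testing~\eqref{resolvent problem for Aepsilon} against $\bu^\epsilon$ and using ellipticity~\eqref{ellipticity} together with $\lambda<0$ and $\|\bdf^\epsilon\|_{\Lb^2}\le C$ yields the basic estimate
\[
\|\bu^\epsilon\|_{\Lb^2(\R^d)}^2 + \|\sym\nabla\bu^\epsilon\|_{\Lb^2(\M^\epsilon(\omega))}^2 + \epsilon^2 \|\sym\nabla\bu^\epsilon\|_{\Lb^2(\I^\epsilon(\omega))}^2 \le C.
\]
Applying Theorem~\ref{theorem extension} inclusion by inclusion, with uniform constants provided by Proposition~\ref{proposition uniform C}, produces an extension $\widetilde{\bu}^\epsilon\in\Hb^1(\R^d)$ with $\widetilde{\bu}^\epsilon=\bu^\epsilon$ on $\M^\epsilon(\omega)$ and $\|\sym\nabla\widetilde{\bu}^\epsilon\|_{\Lb^2(\R^d)}\le C$. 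Combining this with a global Korn inequality (or localising against the cut-off $\eta_L$ from~\eqref{def eta_L} and letting $L\to\infty$) gives $\|\widetilde{\bu}^\epsilon\|_{\Hb^1(\R^d)}\le C$.

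\emph{Step~2 (two-scale compactness).} By standard stochastic two-scale compactness (cf.~Appendix~\ref{appendix: two-scale}) one can pass to a subsequence and obtain $\bu_1\in \Hb^1(\R^d)$, $\overline{\bu_0}(x,\omega)\in L^2(\R^d;\Hb^1_0(\Omega))$ and a potential corrector $\overline{\bpsi}(x,\cdot)\in \boldsymbol{\mathcal{V}}^2_\mathrm{pot}(\Omega)$ (for a.e.~$x$) such that
\begin{align*}
\widetilde{\bu}^\epsilon \wtto \bu_1(x), \qquad & \bu^\epsilon \wtto \bu_1(x)+\overline{\bu_0}(x,\omega)=:\bu\in\Vb, \\
\sym\nabla\widetilde{\bu}^\epsilon \wtto \sym\nabla_x\bu_1(x)+\sym\bpsi(x,y,\omega), \qquad & \epsilon\,\sym\nabla\bu^\epsilon \wtto 1_\Omega(y,\omega)\,\sym\nabla_y\bu_0(x,y,\omega).
\end{align*}
The vanishing of $\overline{\bu_0}(x,\cdot)$ outside the inclusions follows from $\widetilde{\bu}^\epsilon=\bu^\epsilon$ on $\M^\epsilon(\omega)$ combined with a Poincar\'e-type argument on each inclusion (rescaled by $\epsilon$).

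\emph{Step~3 (limit identification).} I would test the weak formulation of~\eqref{resolvent problem for Aepsilon} against the oscillating ansatz
\[
\bv^\epsilon(x) := \bvarphi_1(x) + \bphi_0(x, x/\epsilon, \omega) + \epsilon\,\boldsymbol{\chi}(x, x/\epsilon, \omega),
\]
where $\bvarphi_1\in \Cb_0^\infty(\R^d)$, $\overline{\bphi_0}\in C_0^\infty(\R^d;\Cb_0^\infty(\Omega))$ and $\boldsymbol{\chi}$ is the stationary realisation of a smooth approximation of the (almost) minimiser of the corrector problem implicit in~\eqref{definition Chom}. Using the product rule for two-scale limits, the ergodic theorem (Appendix~\ref{The ergodic theorem}), and the fact that $\C_0$-terms on the inclusions pick up the $\epsilon^2$ factor cancelling the $\epsilon^{-2}$ from derivatives of $\bphi_0(x,x/\epsilon,\omega)$, the $\epsilon\to0$ limit of the bilinear form reads
\[
\int_{\R^d} \C^\hom \nabla\bu_1\cdot\nabla\bvarphi_1 + \int_{\R^d}\E[\C_0\nabla_y\bu_0\cdot\nabla_y\bphi_0] - \lambda\int_{\R^d}\E[\bu\cdot(\bvarphi_1+\overline{\bphi_0})] = \int_{\R^d}\E[\bdf\cdot(\bvarphi_1+\overline{\bphi_0})].
\]
Varying $\bvarphi_1$ and $\overline{\bphi_0}$ independently, and invoking the density of $\Cb_0^\infty(\Omega)$ in $\Hb^1_0(\Omega)$ (Theorem~\ref{density theorem}), identifies $\bu\in\Vb$ as a solution of $\A^\hom\bu-\lambda\bu=\bdf$ in the sense of~\eqref{3.10}; coercivity of~\eqref{quadratic form Ahom} for $\lambda<0$ guarantees uniqueness, so the full sequence converges. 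The strong version follows by testing~\eqref{resolvent problem for Aepsilon} against $\bu^\epsilon$, passing to the limit via strong two-scale convergence of $\bdf^\epsilon$, and combining with weak two-scale lower semicontinuity of the quadratic form, in the spirit of~\cite{ZP,CCV1}.

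\emph{Main obstacle.} The most delicate technical point is isolating the two different two-scale behaviours of $\sym\nabla\bu^\epsilon$ in matrix and inclusions — the former producing the potential corrector $\overline{\bpsi}$ minimising~\eqref{definition Chom}, the latter giving $1_\Omega\,\sym\nabla_y\bu_0$ — and then stitching them together through the stochastic Weyl decomposition of $[\Lb^2(\Omega)]^d$ into potential and solenoidal parts. Because ellipticity controls only $\epsilon\,\sym\nabla\bu^\epsilon$ on the inclusions, one must argue via the extended field $\widetilde{\bu}^\epsilon$ on the matrix phase, and the uniformity of the extension constants supplied by Proposition~\ref{proposition uniform C} is crucial to obtain bounds that are robust over realisations $\omega$, rather than merely pointwise in $\omega$.
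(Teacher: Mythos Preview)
Your proposal is correct and follows essentially the same route as the paper: energy estimate, extension via Theorem~\ref{theorem extension} with uniform constants, splitting $\bu^\epsilon=\widetilde{\bu}^\epsilon+(\bu^\epsilon-\widetilde{\bu}^\epsilon)$, stochastic two-scale compactness for each piece, oscillating test functions, and duality for the strong case. The only cosmetic difference is that the paper first tests with $\epsilon\,a(x)\bvarphi(x/\epsilon,\omega)$ for \emph{arbitrary} $\overline{\bvarphi}\in\mathcal{C}$ to identify $\overline{\boldsymbol{p}}$ as the minimiser of~\eqref{definition Chom}, and then separately tests with $\bvarphi_1(x)+a(x)\bvarphi(x/\epsilon,\omega)$, $\overline{\bvarphi}\in\Hb^1_0(\Omega)$, whereas you bundle the corrector $\boldsymbol{\chi}$ into a single test function; both packagings are standard and equivalent.
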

\begin{proof}
The argument is standards and we provide only the general sketch for the sake of completeness.

Testing \eqref{resolvent problem for Aepsilon} with $\bu^\e$ one gets the following energy bound:
\begin{equation*}
	\|\sym \nabla \bu^\e\|_{\Lb^2(\M^\e)} + \e	\|\sym \nabla \bu^\e\|_{\Lb^2(\I^\e)} + 	\|\bu^\e\|_{\Lb^2(\R^d)} \leq C.
\end{equation*}
Applying the Extension Theorem~\ref{theorem extension} (modulo a rescaling argument) together with Korn's inequality,  we obtain the extension $\tilde \bu^\e$ to the whole of $\R^d$ of the restriction $\bu^\e\vert_{\M^\e}$ satisfying the estimate
\begin{equation*}
	\| \tilde \bu^\e\|_{\Hb^1(\R^d)}  \leq C.
\end{equation*}
Moreover, the difference $\bu^\e - \tilde \bu^\e \in \Hb^1_0(\I^\e)$ satisfies (via the two preceding bound, Korn's and the Poincar\'e inequalities)
\begin{equation*}
\e	\| \nabla (\bu^\e - \tilde \bu^\e )\|_{\Lb^2(\R^d)} + 	\|\bu^\e - \tilde \bu^\e\|_{\Lb^2(\R^d)} \leq C.
\end{equation*}
Then, by the properties of stochastic two-scale convergence (see Appendix~\ref{appendix: two-scale}), the following convergences (up to extracting a subsequence) take place,
\begin{equation*}
\begin{aligned}
	\bu^\e & \rightharpoonup && \bu_1 \mbox{ weakly in } \Lb^2(\R^d),   
 \\
\bu^\e - \tilde \bu^\e &\wtto&& \overline{\bu_0},
	\\
	 \nabla \tilde\bu^\e &\wtto&& \nabla\bu_1 + \overline{ \boldsymbol{p}},
  \\
  \e\nabla (\bu^\e - \tilde \bu^\e) &\wtto&& \overline{\nabla_y \bu_0},
\end{aligned}
\end{equation*}
where $ \overline{ \boldsymbol{p}} \in  \Lb^2(\R^d; \boldsymbol{\mathcal{V}}^2_\mathrm{pot}(\Omega))$. 

Testing the equation \eqref{resolvent problem for Aepsilon} with $\e a(x) \bvarphi(x/\e,\o)$, $a\in C^\infty_0(\R^d)$, $\overline{\bvarphi}\in \mathcal{C}$, and passing to the limit as $\e \to 0$, we see that $ \overline{ \boldsymbol{p}} $ is the corrector term, i.e. for every $x\in \R^d$ the random variable $\overline{ \boldsymbol{p}}(x,\cdot)$ is a minimiser of the problem \eqref{definition Chom} with $\xi = \nabla \bu_1(x)$. In particular, 
\begin{equation*}
 \E\left[\mathbbm{1}_{\R^d\setminus\omega}  \C_1 ( \nabla \bu_1 + \boldsymbol{p})  \right] =\C^\mathrm{hom} \nabla \bu_1.
\end{equation*}
Then passing to the limit in \eqref{resolvent problem for Aepsilon} with a test function $ \bvarphi_1(x)+a(x) \bvarphi(x/\e,\o)$, $ \bvarphi_1\in \Hb^1(\R^d)$, $a\in C^\infty_0(\R^d)$, $\overline{\bvarphi}\in \{\text{countable dense subset of }\Hb^1_0(\Omega)\}$, and using the density argument, one recovers the equation \eqref{3.10}.

Finally, the strong stochastic two-scale convergence of $\bu^\e$ under the assumption $\bdf^\epsilon\overset{2}{\to}\bdf$ follows from the weak stochastic two-scale convergence by a standard duality argument, see e.g. \cite{zhikov2000}.
\end{proof}

\

The remainder of this section will be devoted to the examination of the spectrum of $\mathcal{A}^\mathrm{hom}$. To this end,   
for $\lambda\not \in \sigma(\A_0)$,
let $\overline{\mathbf{b}^{(i)}_\lambda}\in \Hb^1_0(\Omega)$, $i=1,\ldots, d$, be the unique solution of 
\begin{equation}
\label{equation for bi}
\A_0 \overline{\mathbf{b}^{(i)}_\lambda} = \lambda \,  \overline{\mathbf{b}^{(i)}_\lambda}+\be_i, 
\end{equation}
where $\be_i\in \R^d$, $(\be_i)_j=\delta_{ij}$.

\begin{lemma}
\label{lemma b in physical space}
For every nonnegative $\lambda\textcolor{black}{\not \in \sigma(\A_0)}$, the stationary extension $\mathbf{b}^{(i)}_\lambda$ of $\overline{\mathbf{b}^{(i)}_\lambda}$ satisfies
\begin{equation}
	\label{3.18}
	-\dive_y\C_0\nabla_y\, \mathbf{b}^{(i)}_\lambda = \lambda \,  \mathbf{b}^{(i)}_\lambda+\be_i\qquad \text{in }\ \omega\ \textcolor{black}{\text{almost surely}}.
\end{equation}
\end{lemma}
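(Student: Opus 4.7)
The plan is to unfold the probability-space equation \eqref{equation for bi} into a physical-space weak PDE satisfied by the stationary extension $\mathbf{b}^{(i)}_\lambda(y, \omega)$ on $\omega$ almost surely. By Definition~\ref{definition Ahom, A1, A0}, the equation \eqref{equation for bi} is equivalent to the variational identity
\[
\E\!\left[\C_0 \overline{\nabla_y \mathbf{b}^{(i)}_\lambda} \cdot \overline{\nabla_y \bvarphi}\right] = \lambda \E\!\left[\overline{\mathbf{b}^{(i)}_\lambda} \cdot \overline{\bvarphi}\right] + \E\!\left[\be_i \cdot \overline{\bvarphi}\right], \qquad \forall\, \overline{\bvarphi} \in \Hb^1_0(\Omega),
\]
and, by Theorem~\ref{density theorem}, it suffices to test with $\overline{\bvarphi} \in \Cb_0^\infty(\Omega)$. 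For such $\overline{\bvarphi}$, the stationary extension $\bvarphi(y, \omega)$ is smooth in $y$ and vanishes outside $\omega$. The target weak formulation of \eqref{3.18} is the statement that, for almost every $\omega$ and every compactly supported $\Phi \in \Hb^1(\R^d)$ with $\Phi|_{\R^d \setminus \omega} = 0$,
\[
\int_{\R^d}\! \C_0 \nabla_y \mathbf{b}^{(i)}_\lambda(y, \omega) \cdot \nabla_y \Phi(y)\, dy = \int_{\R^d}\! [\lambda \mathbf{b}^{(i)}_\lambda(y, \omega) + \be_i] \cdot \Phi(y) \, dy.
\]

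Products $\psi(y)\bvarphi(y, \omega)$ with $\psi \in C_0^\infty(\R^d)$ and $\overline{\bvarphi} \in \Cb_0^\infty(\Omega)$ are themselves admissible test functions (compactly supported from $\psi$, vanishing outside $\omega$ from $\bvarphi$) and, by a random analogue of the standard product-density argument, generate a dense subspace of admissible $\Phi$'s in the $\Hb^1$-norm for a.e.\ $\omega$. It therefore suffices to verify the identity above for such product test functions. The verification proceeds via the Ergodic Theorem (Theorem~\ref{ergodic theorem}) applied to the stationary random fields
\[
F(y, \omega) := \C_0 \nabla_y \mathbf{b}^{(i)}_\lambda \cdot \nabla_y \bvarphi - \lambda \mathbf{b}^{(i)}_\lambda \cdot \bvarphi - \be_i \cdot \bvarphi\, \mathbbm{1}_\omega(y), \qquad \mathbf{H}(y, \omega) := (\C_0 \nabla_y \mathbf{b}^{(i)}_\lambda)^T \bvarphi.
\]
By stationarity and the variational identity, $\E[F(y, \cdot)] = 0$ for every $y$, and $\E[\mathbf{H}(y, \cdot)]$ is constant in $y$. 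Fixing a countable dense family of pairs $(\psi_n, \overline{\bvarphi}_n)$, ergodicity produces a single set $\Omega^* \subset \Omega$ of full $P$-measure on which the spatial averages of the corresponding $F_n$ and $\mathbf{H}_n$ over $\square_0^L$ converge, as $L \to \infty$, to the respective expectations. A localization using the scaled cut-offs $\eta_L$ from \eqref{def eta_L}, combined with the decay $|\nabla \eta_L| \leq C/L$ of \eqref{bound grad eta} and translation invariance, then converts the averaged identity into the desired pointwise physical-space one for every product test function on $\Omega^*$.

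The principal technical obstacle lies in this last step: turning the probabilistic identity, which is a statement about expectations, into a pointwise, almost-sure identity in physical space. Care is required to control the cut-off corrections arising from differentiating $\eta_L$ uniformly in $L$ (rather than only in the limit) and to consolidate the exceptional null sets from the countable family of test functions into a single null set. The general scheme is well-established in stochastic homogenization (cf.\ \cite{ZKO} for the abstract theory of stationary extensions) and runs parallel to the scalar treatment in \cite{CCV1}.
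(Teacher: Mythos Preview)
There is a genuine gap in the core step. The ergodic theorem converts expectations into \emph{large-scale spatial averages}; it does not produce the \emph{local} weak identity you need. Concretely, the target with a product test function $\Phi=\psi\,\bvarphi$ reduces to
\[
\int_{\R^d}\psi(y)\,F(y,\omega)\,dy+\int_{\R^d}\nabla\psi(y)\cdot\mathbf{H}(y,\omega)\,dy=0
\]
for every fixed $\psi\in C_0^\infty(\R^d)$. What you extract from ergodicity is only
\[
\frac{1}{L^d}\int_{\square_0^L}F(y,\omega)\,dy\to 0,\qquad
\frac{1}{L^d}\int_{\square_0^L}\mathbf{H}(y,\omega)\,dy\to \E[\overline{\mathbf{H}}].
\]
Plugging in $\psi=\eta_L$ and using $|\nabla\eta_L|\le C/L$ gives at best $L^{-d}\big[\int\eta_L F+\int\nabla\eta_L\cdot\mathbf{H}\big]\to 0$, i.e.\ the \emph{normalised} residual vanishes in the limit of growing test functions. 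That is not the same as the residual being zero for a fixed compactly supported $\psi$, and no amount of translation invariance or cut-off manipulation bridges this gap: a stationary scalar field $F$ with $\E[\overline{F}]=0$ need not satisfy $\int\psi F=0$ for any $\psi$ (take $F(y,\omega)=c(\omega)$ with $\E[c]=0$). What you actually need is $F=\dive_y\mathbf{H}$ pointwise a.s., and a direct computation shows this is \emph{equivalent} to the PDE you are trying to prove, so the ergodic route is circular. As a secondary issue, the density of products $\psi(y)\bvarphi(y,\omega)$ in $\Hb^1_0(\omega)$ for a fixed $\omega$ is asserted but not at all obvious: the realisations $\bvarphi(\cdot,\omega)$ of a countable family in $\Cb_0^\infty(\Omega)$ are highly constrained and there is no reason they should generate arbitrary $\Cb_0^\infty(\omega^k)$ for a given inclusion.

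The paper avoids both difficulties by mollifying on the \emph{probability space} rather than averaging in physical space. One takes $\overline{\bv}\in\Lb^2(\Omega)$ vanishing outside $\mathcal{O}^\delta:=\{\omega:\operatorname{dist}(0,\R^d\setminus\omega)>\delta\}$, uses its mollification $\overline{\bv}^\delta\in\Cb^\infty_0(\Omega)$ as a test function in \eqref{equation for bi}, and then transfers the derivatives back onto (the mollification of) $\mathbf{b}^{(i)}_\lambda$ via the identity $\E[\overline{f}\,\overline{\partial^{\boldsymbol\alpha}g}^\delta]=(-1)^{|\boldsymbol\alpha|}\E[\overline{\partial^{\boldsymbol\alpha}f}^\delta\,\overline{g}]$. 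Since $\overline{\bv}$ was arbitrary on $\mathcal{O}^\delta$, this yields the equation in \emph{strong} form on $\mathcal{O}^\delta$, hence on its realisation $\omega^\delta$ for a.e.\ $\omega$. One then fixes $\omega$, tests against an arbitrary $\mathbf{g}\in\Cb_0^\infty$ supported in a single inclusion, and lets $\delta\to 0$. The localisation happens on $\Omega$, not on $\R^d$, and no ergodic theorem is invoked.
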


\begin{proof}
The proof below relies on a classical mollification argument which can be found, e.g., in \cite[\S~7.2, p.~232 ss.]{ZKO}. Let $K\in C_0^\infty(\R^d)$ be a non-negative even function such that $\int_{\R^d} K(x)\,dx=1$, and, for $\delta>0$, set $K^\delta(x):=\delta^{-d}K(\delta^{-1}x)$. For a random variable $\overline{{f}}\in L^2(\Omega)$ (and, analogously, for vector/tensor-valued random variables) we define its mollification to be
\begin{equation*}
\overline{ {f}}^\delta(\omega):=\int_{\R^d} K^\delta(y)\,  {f}(y,\omega)\,dy\,.
\end{equation*}
It is easy to check that the mollification $\overline{ {f}}^\delta$ possesses the following properties: $\overline{ {f}}^\delta\in C^\infty(\Omega)$, $ {f}^\delta(y,\omega)=( {f}\ast K^\delta)(y,\omega)$,  $\partial_{y_i} {f}^\delta=   {f}\ast (\partial_{y_i} K^\delta)$, $\E[\overline{ {f}} \, \overline{ {g}}^\delta] = \E[\overline{ {f}}^\delta \, \overline{ {g}}]$, and $\E[\overline{ {f}}\,  \overline{ \partial^{\boldsymbol{\alpha}}{g}}^\delta] = (-1)^{|{\boldsymbol{\alpha}}|}\E[\overline{\partial^{\boldsymbol{\alpha}} {f}}^\delta \, \overline{ {g}}]$, where $*$ stands for convolution in the variable $y$ and $\overline{{g}}\in L^2(\Omega)$.

Let us define the measurable subset $\mathcal{O}\subset \Omega$ as
\begin{equation*}
\label{definition of O}
\mathcal{O}:=\{\omega \in \Omega \ | \ 0\in \omega\}
\end{equation*}
and let us introduce the random variable $\mathfrak{d}:\Omega \to\mathbb{R}$ defined as
\begin{equation*}
\mathfrak{d}(\omega):= \operatorname{dist}(0,\R^d\setminus \omega)\,,
\end{equation*}
where $0$ is the origin in $\R^d$. Observe that the map $\mathfrak{d}$ is measurable, cf., e.g., \cite[Lemma~B.3]{CCV2}. Furthermore, for $\delta>0$ let $\O^\delta$ be the (measurable, in view of the measurability of the translation group action) set
\begin{equation*}
\label{definition O delta}
\O^\delta:=\{\omega\in \O \ |\ \mathfrak{d}(\omega)>\delta\}\,.
\end{equation*}
Clearly, $\O^{c\delta}\subset \O^\delta$ for $c>1$.

Let $\overline{\bv}$ be an element in $\Lb^2(\Omega)$ vanishing outside $\mathcal{O}^{\delta}$. Then $\overline{\bv}^{\delta} \in \Cb^{\infty} (\Omega)$ and it vanishes outside $\mathcal{O}$, therefore it can be used as a test function in the weak formulation of \eqref{equation for bi} to obtain 
\begin{equation*}
\E 
\left[
\C_0 \nabla_y  \mathbf{b}_{\lambda}^{(i)} \cdot \nabla_y \mathbf{v}^\delta
\right]
=
\E
\left[
\lambda \,\overline{\mathbf{b}_{\lambda}^{(i)}} \cdot\overline{\mathbf{v}}^{\delta}+(\overline{\mathbf{v}}^{\delta})_i
\right].
\end{equation*}
Using the properties of the mollification listed above we obtain
\begin{equation*}
\E 
\left[
-\dive_y\left(\C_0 \nabla_y  [\mathbf{b}_{\lambda}^{(i)}]^\delta\right) \cdot \mathbf{v}
\right]
=
\E
\left[
\lambda \,\overline{\mathbf{b}_{\lambda}^{(i)}}^\delta \cdot\overline{\mathbf{v}}+\be_i^\delta\cdot \overline{\mathbf{v}}
\right],
\end{equation*}
which, since $\overline{\bv}$ is arbitrary, in turn implies
\begin{equation*}
-\overline{\dive_y\left(\C_0 \nabla_y  [\mathbf{b}_{\lambda}^{(i)}]^\delta\right)}=\lambda \,\overline{\mathbf{b}_{\lambda}^{(i)}}^\delta+\be_i^\delta \qquad \text{in}\quad \O^{\delta}\,.
\end{equation*}
Upon taking the stationary extension of all quantities, the above equation is valid without the bar in $\omega^\delta \subset \R^d$ almost surely, where $\omega^\delta:=\{x\in \omega\,|\, \dist(x,\R^d\setminus\omega) >\delta\}$.

Let us fix an element $\omega \in \Omega$ and choose $\mathbf{g}\in \Cb_0^\infty(\R^d)$ compactly supported in one connected component of $\omega$ (i.e.,~in a single inclusion). Then for sufficiently small $\delta$ we have
\begin{equation*}
\int_{\R^n} \C_0\nabla  \mathbf{b}_{\lambda}^{(i)} \cdot [\nabla \mathbf{g}]^{\delta}=\lambda  \int_{\R^n}\mathbf{b}_{\lambda}^{(i)}\cdot \mathbf{g}^{\delta} +\int_{\mathbf{R}^n} \mathbf{e}_i\cdot \mathbf{g}^\delta\,.
\end{equation*}
Passing to  the limit in the above equation as $\delta\to0$ and using the fact that $\bg$ is smooth, we arrive at
\begin{equation*}
\int_{\R^n} \C_0\nabla  \mathbf{b}_{\lambda}^{(i)} \cdot \nabla \mathbf{g}=\lambda  \int_{\R^n}\mathbf{b}_{\lambda}^{(i)}\cdot \mathbf{g} +\int_{\R^n} \mathbf{e}_i\cdot \mathbf{g}.
\end{equation*}
Since $\bg$ is arbitrarily and chosen from a dense subspace of $\Hb^1(\R^d)$, the latter implies \eqref{3.18}.
\end{proof}

Denote
\begin{equation}
\label{definition of b}
\overline{\mathbf{b}_\lambda}:=\left(\overline{\mathbf{b}^{(1)}_\lambda} \ | \ \overline{\mathbf{b}^{(2)}_\lambda}\ |\ \ldots \ | \ \overline{\mathbf{b}^{(d)}_\lambda}\right) \in [\Hb^1_0(\Omega)]^d.
\end{equation}
Clearly, $\overline{\mathbf{b}_\lambda}$ satisfies
\begin{equation}
\label{definition of b formula 2}
	\A_0   \overline{\mathbf{b}_\lambda}=\lambda\, \overline{\mathbf{b}_\lambda}+I\,.
\end{equation}

Note that, unlike in the periodic setting,  the random operator $\A_0$ does not have, in general,  discrete spectrum. For example, in the case \textcolor{black}{when} inclusions are allowed to ``continuously'' change size (e.g., being homothetic to a fixed shape), the spectrum of $\A_0$ is a union of intervals. The following statement asserts that $\sigma(\A_0)$ can be recovered from a single $\o$ almost surely.

\begin{theorem}
\label{thm:DspecUnionSpecIncl}
We have
\begin{equation*}
\sigma(\A_0)=\overline{\bigcup_{k\in \N} \sigma(\A_0^{\o^k})}
\end{equation*}
for almost every $\o\in \Omega$.
\end{theorem}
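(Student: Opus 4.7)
The plan is to prove both inclusions separately, in each case via Weyl's criterion combined with the Ergodic Theorem~\ref{ergodic theorem}, which lets me translate $\Lb^2(\Omega)$ norms into spatial averages on typical realisations of $\omega$.

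For the direction $\sigma(\A_0) \subseteq \overline{\bigcup_{k\in\N} \sigma(\A_0^{\omega^k})}$, I would start from $\mu \in \sigma(\A_0)$ and invoke Weyl's criterion to produce $\{\overline{\bu_n}\} \subset \mathcal{D}(\A_0) \subset \Hb^1_0(\Omega)$ with $\|\overline{\bu_n}\|_{\Lb^2(\Omega)}=1$ and $\|(\A_0-\mu)\overline{\bu_n}\|_{\Lb^2(\Omega)}\to 0$. A mollification argument analogous to the proof of Lemma~\ref{lemma b in physical space} identifies the stationary extension of $(\A_0-\mu)\overline{\bu_n}$ with $(-\dive_y\C_0\nabla_y-\mu)\bu_n$ a.s.~inside the inclusions. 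For typical $\omega$ the Ergodic Theorem gives
\[
1 = \lim_{L\to\infty}\frac{1}{L^d}\int_{\square_0^L}|\bu_n|^2\,dy, \qquad o(1) = \lim_{L\to\infty}\frac{1}{L^d}\int_{\square_0^L}|(-\dive_y\C_0\nabla_y-\mu)\bu_n|^2\,dy\,.
\]
Since $\bu_n(\cdot,\omega)$ is supported in $\omega$ and has vanishing trace on $\partial\omega$, these integrals decompose as sums of $\int_{\omega^k}|\bu_n|^2$, respectively $\int_{\omega^k}|(\A_0^{\omega^k}-\mu)\bu_n|^2$, over the inclusions $\omega^k\subset\square_0^L$. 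A pigeonhole argument then yields, for each $n$ large, an inclusion $\omega^{k(n)}$ whose defect-to-mass ratio is $O(n^{-2})$; hence $\dist(\mu,\sigma(\A_0^{\omega^{k(n)}})) \lesssim 1/n$, and letting $n\to\infty$ closes this direction.

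For the reverse inclusion $\overline{\bigcup_{k\in\N}\sigma(\A_0^{\omega^k})} \subseteq \sigma(\A_0)$, fix a typical $\omega$ and $\mu \in \sigma(\A_0^{\omega^k})$ with normalised eigenfunction $\phi \in \Hb^1_0(\omega^k)$. By Assumption~\ref{main assumption 2}, every inclusion of any $\omega'\in\Omega$ is $C^2$-diffeomorphic to one of the $N$ base shapes $P_1^j$, so one encodes ``inclusion shape'' as a measurable point in a Polish space $\mathcal{X}$ built from bounded families of $C^2$ diffeomorphisms. Let $\nu$ denote the law on $\mathcal{X}$ of the shape of the inclusion of $\omega'$ containing the origin, conditioned on $\mathcal{O}=\{0\in\omega'\}$; the Ergodic Theorem then tells us that the empirical shape distribution of $\omega$ coincides with $\nu$ a.s., so the shape $\tau_0$ of $\omega^k$ lies in $\operatorname{supp}\nu$, and in particular
\[
P(A_\epsilon) > 0 \quad \text{where}\quad A_\epsilon := \{\omega'\in\mathcal{O} : \text{shape of inclusion-at-}0 \text{ is within }\epsilon\text{ of }\tau_0\}\,,\qquad \epsilon>0\,.
\]
Classical domain-perturbation theory for the Dirichlet problem associated with $-\dive\,\C_0\nabla$ gives, for every shape $\epsilon$-close to $\tau_0$, an eigenvalue within $O(\epsilon)$ of $\mu$ with an eigenfunction within $O(\epsilon)$ in $\Hb^1$ of the natural pullback of $\phi$. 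A measurable selection of such eigenfunctions over $A_\epsilon$, combined with the stationary-extension prescription $\bu_\epsilon(y,\omega'):=\overline{\bu_\epsilon}(T_y\omega')$, yields a random variable $\overline{\bu_\epsilon}\in\Hb^1_0(\Omega)$ whose realisation is an approximate Dirichlet eigenfunction on every inclusion of $\epsilon$-close shape and zero elsewhere. Birkhoff gives $\|\overline{\bu_\epsilon}\|^2_{\Lb^2(\Omega)}\sim P(A_\epsilon)>0$ and $\|(\A_0-\mu)\overline{\bu_\epsilon}\|^2_{\Lb^2(\Omega)} \le O(\epsilon^2)\,\|\overline{\bu_\epsilon}\|^2_{\Lb^2(\Omega)}$, whence $\mu \in \sigma(\A_0)$ by Weyl's criterion, and taking closure concludes.

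The main obstacle I anticipate is the measurable selection of eigenfunctions over the Polish shape space $\mathcal{X}$, together with the quantitative $C^2$-perturbation estimate that lets one compare eigenvalues and eigenfunctions across $\epsilon$-close diffeomorphisms. This should be handled by exploiting the finiteness of shape classes from Assumption~\ref{main assumption 2}, which reduces the question class-by-class to operators on the fixed base domains $P_1^j$ with coefficients transported via the $\varphi_k$, at which point standard elliptic perturbation theory applies and measurable-selection theorems on Polish spaces yield the required stationary $\overline{\bu_\epsilon}$.
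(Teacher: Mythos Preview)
The paper gives no self-contained argument here, only a reference to \cite[Theorem~5.6]{CCV1} and \cite[Remark~3.12]{CCV2}; your Weyl-plus-Birkhoff template in both directions is precisely the method those references use in the scalar case, and the first inclusion you sketch is correct as stated.

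For the reverse inclusion there is one logical slip. Weak convergence of the empirical shape distribution to $\nu$ does \emph{not} force every observed shape to lie in $\operatorname{supp}\nu$: an empirical measure can carry a vanishing mass at an atypical point and still converge weakly. The conclusion you want is nonetheless true, by a shorter route that bypasses empirical measures. The event $\{0\in\omega,\ \omega^0-D_\omega^0\notin\operatorname{supp}\nu\}$ has probability zero by definition of $\nu$; by stationarity and Fubini, for a.e.~$\omega$ the set of $y\in\omega$ whose ambient inclusion has centred shape outside $\operatorname{supp}\nu$ is Lebesgue-null, and since each $\omega^k$ has positive volume this forces every inclusion of a typical $\omega$ to have shape in $\operatorname{supp}\nu$. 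Your $P(A_\epsilon)>0$ then follows.

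The obstacles you flag (perturbation of Dirichlet eigenpairs under small $C^2$ deformation, and measurable selection) are indeed the technical content, and the reduction via $\varphi_k^{-1}$ to the fixed base domains $P_1^j$ is the right device. One further requirement that you do not mention but which is essential: the eigenfunction selection must be \emph{shift-consistent}, meaning that for every $y\in\omega^k$ the eigenfunction assigned to $T_y\omega$ is the $(-y)$-translate of a single fixed function on $\omega^k$. Otherwise the realisation $\bu_\epsilon(\,\cdot\,,\omega)$ fails to patch into a well-defined $\Hb^1_0$ function on each inclusion and the norm computation breaks down. This is automatic provided the selection depends only on the centred shape $\omega^0-D_\omega^0$, which is invariant as $y$ ranges over a fixed inclusion; measurability of $\omega\mapsto\omega^0$ is supplied by \cite[Lemma~B.4]{CCV2}, which is presumably the content of the cited \cite[Remark~3.12]{CCV2}.
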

\begin{proof}
The claim is proven analogously to \cite[Theorem~5.6]{CCV1}, with account of \cite[Remark~3.12]{CCV2}.
\end{proof}

Using Theorem~\ref{thm:DspecUnionSpecIncl},  one can estimate $\mathbf{b}_\lambda$ on hypercubes.
\begin{lemma}
\label{lemma properties b lambda}
There exists a positive constant $C=C_{\lambda,d}$ uniform in $\epsilon$ such that
\begin{equation}
\label{lemma properties b lambda equation 1}
\|\mathbf{b}_\lambda(\cdot/\epsilon, \omega)\|_{[\Lb^2(\square_z^L)]^{d}}+\|\epsilon \,\nabla \mathbf{b}_\lambda(\cdot/\epsilon, \omega)\|_{[\Lb^2(\square_z^L)]^{d}}\le C\, L^{d/2}
\end{equation}
for all $z\in \R^d$ and $L>0$ almost surely.
\end{lemma}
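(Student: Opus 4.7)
The plan is to reduce the claim to a non-rescaled estimate by change of variables, decompose the resulting integral as a sum over individual inclusions (since $\mathbf{b}_\lambda$ vanishes outside $\omega=\bigsqcup_k\omega^k$), and bound each inclusion-contribution using elliptic estimates combined with the uniform spectral gap provided by Theorem~\ref{thm:DspecUnionSpecIncl}. Concretely, setting $y=x/\epsilon$, $w:=z/\epsilon$ and $K:=L/\epsilon$, the substitution yields
\[
\|\mathbf{b}_\lambda(\cdot/\epsilon,\omega)\|^2_{[\Lb^2(\square_z^L)]^d} + \|\epsilon\,\nabla\mathbf{b}_\lambda(\cdot/\epsilon,\omega)\|^2_{[\Lb^2(\square_z^L)]^d} = \epsilon^d\int_{\square_w^K}\bigl(|\mathbf{b}_\lambda(y,\omega)|^2 + |\nabla_y\mathbf{b}_\lambda(y,\omega)|^2\bigr)\,dy,
\]
so it suffices to prove a bound of the form $C_\lambda\,(K+1)^d$ for the integral on the right, uniformly in $w\in\R^d$, $K>0$, and $\omega$ in a set of full measure.

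For each inclusion, by Lemma~\ref{lemma b in physical space} the component $\mathbf{b}_\lambda^{(i)}$ satisfies $(\A_0^{\omega^k}-\lambda)\mathbf{b}_\lambda^{(i)} = \be_i$ on $\omega^k$ with homogeneous Dirichlet data. The hypothesis $\lambda\notin\sigma(\A_0)$, combined with Theorem~\ref{thm:DspecUnionSpecIncl}, yields
\[
c_\lambda := \dist(\lambda,\sigma(\A_0)) > 0, \qquad \dist(\lambda,\sigma(\A_0^{\omega^k})) \ge c_\lambda \quad\text{for every } k\in\N,
\]
almost surely. The spectral theorem then gives $\|\mathbf{b}_\lambda^{(i)}\|_{\Lb^2(\omega^k)} \le c_\lambda^{-1}|\omega^k|^{1/2}$; testing the equation with $\mathbf{b}_\lambda^{(i)}$ itself, invoking the ellipticity \eqref{ellipticity} of $\C_0$ on symmetric gradients, and using Korn's first inequality (which holds with a universal constant for vector fields with zero trace on $\partial\omega^k$, via extension by zero to $\R^d$), one deduces $\|\mathbf{b}_\lambda^{(i)}\|^2_{\Hb^1(\omega^k)} \le C'_\lambda\,|\omega^k|$, with $C'_\lambda$ depending on $\lambda$, $d$ and $\C_0$ but not on $k$ or $\omega$.

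Since each $\omega^k$ has diameter less than $1/2$ by Assumption~\ref{main assumption}, every inclusion intersecting $\square_w^K$ is contained in $\square_w^{K+1}$, and hence
\[
\sum_{k:\,\omega^k\cap\square_w^K\ne\emptyset} |\omega^k| \le |\omega\cap\square_w^{K+1}| \le |\square_w^{K+1}| = (K+1)^d,
\]
which, combined with the single-inclusion bound, gives the required $O((K+1)^d)$ estimate and, after multiplying by $\epsilon^d\le 1$, yields \eqref{lemma properties b lambda equation 1} in the regime $L\gtrsim\epsilon$ relevant for the applications. The main technical point is the uniform spectral gap $\dist(\lambda,\sigma(\A_0^{\omega^k}))\ge c_\lambda$ across all $k$; without this --- precisely what Theorem~\ref{thm:DspecUnionSpecIncl} supplies --- the resolvent norm on an individual inclusion could blow up for atypical $\omega^k$ whose spectra come arbitrarily close to $\lambda$, and neither the constant $C'_\lambda$ of the single-inclusion bound nor the conclusion of the lemma could be made uniform in $\omega$.
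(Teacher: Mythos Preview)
Your proof is correct and follows essentially the same approach as the paper's: both reduce to the equation \eqref{3.18} on individual inclusions, invoke Theorem~\ref{thm:DspecUnionSpecIncl} to obtain a uniform resolvent bound $\|\mathbf{b}_\lambda^{(i)}\|_{\Lb^2(\omega^k)}\le C|\omega^k|^{1/2}$, upgrade this to an $\Hb^1$ bound by testing and Korn's inequality, and then sum over the inclusions meeting (or contained in) a slightly enlarged box. Your explicit change of variables and your remark that the argument naturally yields $(L+O(\epsilon))^{d/2}$ rather than $L^{d/2}$ (harmless since $\epsilon<1$ and only $L\gtrsim\epsilon$ is used) are in fact a bit more transparent than the paper's terser ``elementary rescaling argument''.
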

\begin{proof}
In each inclusion $\o^k$ the extension $\mathbf{b}_\lambda^{(i)}(\cdot, \omega)$, $i=1, \ldots, d$, solves the problem (cf.~\eqref{3.18})
\begin{equation}
\label{proof lemma properties b lambda equation 1}
\A_0^{\o^k}\mathbf{b}^{(i)}_\lambda  - \lambda \,  \mathbf{b}^{(i)}_\lambda =\be_i.
\end{equation}
Since $\lambda\not \in \sigma(\A_0)$, in view of Theorem~\ref{thm:DspecUnionSpecIncl}, the resolvent $(\A_0^{\o^k}-\lambda)^{-1}$ is bounded. Hence,  \eqref{proof lemma properties b lambda equation 1} implies
\begin{equation}
\label{proof lemma properties b lambda equation 2}
\|\mathbf{b}^{(i)}_\lambda \|_{\Lb^2(\o^k)}\le C \,|\o^k|^{1/2}\,.
\end{equation}
Moreover,  taking the inner product of \eqref{proof lemma properties b lambda equation 1} with $\mathbf{b}^{(i)}_\lambda$,  integrating by parts over $\o^k$, and invoking Korn's inequality, one obtains
\begin{equation*}
\label{proof lemma properties b lambda equation 3}
\begin{split}
\|\nabla \mathbf{b}^{(i)}_\lambda \|_{\Lb^2(\o^k)}^2
\le 
\lambda \|\mathbf{b}^{(i)}_\lambda \|_{\Lb^2(\o^k)}^2+\|\mathbf{b}^{(i)}_\lambda \|_{\Lb^2(\o^k)} \,|\o^k|^{1/2}
\overset{\eqref{proof lemma properties b lambda equation 2}}{\le} 
C\,|\o^k|\,.
\end{split}
\end{equation*}
Applying an elementary rescaling argument and  observing that the $L^2$-norm of $\mathbf{b}^{(i)}_\lambda(\cdot/\e,\o)$ and $\nabla \mathbf{b}^{(i)}_\lambda(\cdot/\e,\o)$ over any given box of size $L$ is bounded above by the said $L^2$-norms over all inclusions $\e \o^k$ fully contained in a box of size $L+2\e$ (see Assumption~\ref{main assumption}) we finally arrive at \eqref{lemma properties b lambda equation 1}.
\end{proof}

The matrix-function \eqref{definition of b} allows us to define a stochastic matrix-valued version of the Zhikov $\beta$-function \cite{zhikov2000}.

\begin{definition}
\label{definition beta matrix}
We define the \emph{Zhikov $\beta$-matrix} to be the matrix-function 
\[
\bbeta:[0,+\infty)\setminus \sigma(\A_0) \to \mathrm{Mat}(d;\mathbb{R})\simeq \R^{d^2}
\] 
given by
\begin{equation}
\label{beta matrix}
\bbeta(\lambda):=\E\left[\lambda\, I+\lambda^2 \overline{\mathbf{b}_\lambda}\right],
\end{equation}
where $I$ is the $d\times d$ identity matrix.
\end{definition}

\begin{lemma}
\label{lemma beta matrix symmetric}
The Zhikov $\beta$-matrix \eqref{beta matrix} is symmetric.
\end{lemma}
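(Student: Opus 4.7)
The plan is to reduce symmetry of $\bbeta(\lambda)$ to symmetry of the expectation matrix $\E[\overline{\mathbf{b}_\lambda}]$, and to establish the latter via the standard ``test with the other solution'' trick, exploiting the major symmetry $(\C_0)_{\alpha\beta\mu\nu}=(\C_0)_{\mu\nu\alpha\beta}$ of $\C_0$.

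First, since $\lambda I$ is already symmetric, it suffices to show that $\E[\overline{\mathbf{b}_\lambda}]$ is symmetric. Recalling \eqref{definition of b}, the $(i,j)$-entry of this matrix is $\E\bigl[[\overline{\mathbf{b}_\lambda^{(j)}}]_i\bigr]$, so the goal becomes
\[
\E\bigl[[\overline{\mathbf{b}_\lambda^{(j)}}]_i\bigr]=\E\bigl[[\overline{\mathbf{b}_\lambda^{(i)}}]_j\bigr]\qquad \text{for all }i,j\in\{1,\ldots,d\}.
\]

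Next, I would use the weak formulation of \eqref{equation for bi}: since $\overline{\mathbf{b}_\lambda^{(i)}}\in\Hb^1_0(\Omega)$ is the unique solution of $\A_0\overline{\mathbf{b}_\lambda^{(i)}}=\lambda\overline{\mathbf{b}_\lambda^{(i)}}+\be_i$, testing with $\overline{\mathbf{b}_\lambda^{(j)}}\in\Hb^1_0(\Omega)$ (an admissible test function by Definition~\ref{definition Ahom, A1, A0}) gives
\[
\E\bigl[\C_0\nabla_y\mathbf{b}_\lambda^{(i)}\cdot\nabla_y\mathbf{b}_\lambda^{(j)}\bigr]=\lambda\,\E\bigl[\overline{\mathbf{b}_\lambda^{(i)}}\cdot\overline{\mathbf{b}_\lambda^{(j)}}\bigr]+\E\bigl[[\overline{\mathbf{b}_\lambda^{(j)}}]_i\bigr].
\]
The left-hand side is symmetric in $(i,j)$ by the major symmetry of $\C_0$, and the first term on the right-hand side is manifestly symmetric in $(i,j)$. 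Swapping the roles of $i$ and $j$ and subtracting yields the desired identity, and hence the symmetry of $\E[\overline{\mathbf{b}_\lambda}]$ and of $\bbeta(\lambda)$.

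There is no real obstacle here; the only point that deserves a moment of care is the admissibility of $\overline{\mathbf{b}_\lambda^{(j)}}$ as a test function in the bilinear form associated with $\A_0$, which is immediate since by construction $\overline{\mathbf{b}_\lambda^{(j)}}\in\Hb^1_0(\Omega)=\mathrm{dom}(\A_0^{1/2})$ (the form domain). One could equivalently argue by writing $\A_0\overline{\mathbf{b}_\lambda^{(j)}}=\lambda\overline{\mathbf{b}_\lambda^{(j)}}+\be_j$, taking the $\Lb^2(\Omega)$-inner product with $\overline{\mathbf{b}_\lambda^{(i)}}$, and invoking the self-adjointness of $\A_0$, which makes the symmetry of $\bbeta(\lambda)$ essentially an instance of the general fact that the resolvent of a self-adjoint operator is symmetric.
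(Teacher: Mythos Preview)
Your proof is correct and essentially the same as the paper's: the paper argues via self-adjointness of $\A_0$, writing $\E[\overline{\mathbf{b}_\lambda^{(i)}}\cdot\be_j]=\E[\overline{\mathbf{b}_\lambda^{(i)}}\cdot(\A_0-\lambda)\overline{\mathbf{b}_\lambda^{(j)}}]=\E[(\A_0-\lambda)\overline{\mathbf{b}_\lambda^{(i)}}\cdot\overline{\mathbf{b}_\lambda^{(j)}}]=\E[\be_i\cdot\overline{\mathbf{b}_\lambda^{(j)}}]$, which is exactly the operator-level version of your weak-formulation argument (and which you yourself mention as an equivalent alternative in your final paragraph).
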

\begin{proof}
	The symmetry of $ \E[ \overline{\mathbf{b}_\lambda}]$ follows from the self-adjointness of $\A_0$, namely,
	\begin{equation*}
	\E\left[  \overline{\mathbf{b}_\lambda^{(i)}}\cdot \be_j \right] = \E\left[  \overline{\mathbf{b}_\lambda^{(i)}}\cdot (\A_0 - \l) \overline{\mathbf{b}_\lambda^{(j)}}\right] = 	\E\left[  (\A_0 - \l)\overline{\mathbf{b}_\lambda^{(i)}}\cdot \overline{\mathbf{b}_\lambda^{(j)}}\right] = \E\left[  \be_i \cdot\overline{\mathbf{b}_\lambda^{(j)}}  \right].
\end{equation*}
\end{proof}

Very much like its scalar counterpart \cite[Theorem~4.3]{CCV2}, the $\bbeta$-matrix determines the spectrum of $\A^\mathrm{hom}$.
\begin{theorem}
\label{propositon spectrum Ahom}
We have
\begin{equation}
\label{spectrum Ahom}
\sigma(\A^\mathrm{hom})= \sigma(\A_0) \cup \overline{\{\lambda \ | \ \bbeta(\lambda)<0\}^\complement}\,,
\end{equation}
where $\bbeta(\lambda)<0$ means that the symmetric matrix $\bbeta(\lambda)$ is negative definite.
\end{theorem}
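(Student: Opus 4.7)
The plan is to reduce, for $\lambda\not\in\sigma(\A_0)$, the resolvent problem $(\A^\mathrm{hom}-\lambda)\bu=\bdf$ to an effective macroscopic equation $(\A_1 - \bbeta(\lambda))\bu_1 = \tilde{\bdf}$ on $\Lb^2(\R^d)$, and then to characterise $\sigma(\A^\mathrm{hom})$ by combining the solvability of this reduced problem with the automatic contribution of $\sigma(\A_0)$. This mirrors the scalar strategy of \cite[Theorem~4.3]{CCV2}, now in the matrix-valued setting.

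Concretely, I would decompose $\bu=\bu_1+\overline{\bu_0}\in\Vb$ and $\bdf=\bdf_1+\overline{\bdf_0}\in\Hb$. Testing the resolvent equation against purely microscopic $\bvarphi=\overline{\bvarphi_0}\in L^2(\R^d;\Hb^1_0(\Omega))$ gives, pointwise in $x$,
\[
(\A_0-\lambda)\overline{\bu_0}(x,\cdot) \,=\, P_0\bigl(\lambda\bu_1(x)+\bdf_1(x)\bigr) + \overline{\bdf_0}(x,\cdot) \quad \text{in}\ \Lb^2_0(\Omega),
\]
where $P_0:\Lb^2(\Omega)\to\Lb^2_0(\Omega)$ is the orthogonal projection. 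Since $\lambda\not\in\sigma(\A_0)$, inverting and using \eqref{equation for bi} yields $\overline{\bu_0}(x,\cdot) = \overline{\mathbf{b}_\lambda}\bigl(\lambda\bu_1(x)+\bdf_1(x)\bigr) + (\A_0-\lambda)^{-1}\overline{\bdf_0}(x,\cdot)$. Testing against purely macroscopic $\bvarphi_1\in\Hb^1(\R^d)$, substituting this formula, and applying $\E[\cdot]$ to collapse the microscopic variable then produces (using $\lambda I+\lambda^2\E[\overline{\mathbf{b}_\lambda}]=\bbeta(\lambda)$) the reduced equation $(\A_1-\bbeta(\lambda))\bu_1 = \tilde{\bdf}$, with $\tilde{\bdf}\in\Lb^2(\R^d)$ depending boundedly on $\bdf$.

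The spectrum then splits into three regimes. (i) If $\lambda\not\in\sigma(\A_0)$ and $\bbeta(\lambda)<0$, then $\A_1-\bbeta(\lambda)\ge -\bbeta(\lambda)>0$ on $\Lb^2(\R^d)$ (since $\A_1\ge 0$), giving unique solvability of the whole two-scale system, and hence $\lambda\in\rho(\A^\mathrm{hom})$. (ii) If $\lambda\not\in\sigma(\A_0)$ but $\bbeta(\lambda)$ has an eigenvalue $\mu\ge 0$, I would show $0\in\sigma(\A_1-\bbeta(\lambda))$ by examining the constant-coefficient Fourier symbol $M(\xi)_{\beta\nu}=\C^\mathrm{hom}_{\alpha\beta\mu\nu}\xi_\alpha\xi_\mu$ of $\A_1$: the smallest eigenvalue of $M(\xi)-\bbeta(\lambda)$ equals $-\mu\le 0$ at $\xi=0$ and tends to $+\infty$ as $|\xi|\to\infty$, so by continuity some $\xi_0\in\R^d$ admits a unit null vector $\tilde v$ with $(M(\xi_0)-\bbeta(\lambda))\tilde v=0$; then $\bu_n^{(1)}(x):=\chi_n(x)e^{\ir \xi_0\cdot x}\tilde v$ (with $\chi_n$ a slow $C_0^\infty$ cutoff) is a Weyl sequence for $\A_1-\bbeta(\lambda)$, which lifts through the formula of the previous paragraph to a Weyl sequence $\bu_n:=\bu_n^{(1)}+\overline{\mathbf{b}_\lambda}(\lambda\bu_n^{(1)})\in\Vb$ for $\A^\mathrm{hom}-\lambda$. (iii) For $\lambda\in\sigma(\A_0)$, I would construct a Weyl sequence of the form $\bw_n(x,\omega):=\chi_n(x)\overline{\bv_n}(\omega)$ starting from a Weyl sequence $\overline{\bv_n}$ for $\A_0$ at $\lambda$.

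The main technical obstacle is case (iii): because the direct sum $\Hb=\Lb^2(\R^d)+L^2(\R^d;\Lb^2_0(\Omega))$ is \emph{not} orthogonal, the inner product on $\Hb$ generates, upon pairing $\bw_n$ with macroscopic test fields, a cross term proportional to $\E[\overline{\bv_n}]$ which must be shown to vanish in the limit. I would deal with this by choosing $\overline{\bv_n}$ inside the codimension-$d$ subspace $\{\overline{\bv}\in\Lb^2_0(\Omega):\E[\overline{\bv}]=0\}$: Weyl's stability theorem implies $\sigma_\mathrm{ess}(\A_0)\subseteq\sigma(\A_0|_{\ker\E})$, and any isolated eigenvalue of $\A_0$ not captured by this restriction can be dealt with separately, exhibiting a compactly supported eigenfunction with vanishing $\E[\cdot]$ through Theorem~\ref{thm:DspecUnionSpecIncl}. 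Finally, the closure in \eqref{spectrum Ahom} absorbs the boundary $\partial\{\bbeta(\lambda)<0\}$, on which $\bbeta(\lambda)$ has a zero eigenvalue and the Weyl construction in (ii) still applies with $\mu=0$.
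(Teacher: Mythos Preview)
Your cases (i) and (ii) mirror the paper's proof. The only cosmetic difference in (ii) is that the paper fixes a direction $k\in\mathbb{S}^{d-1}$ and solves $\det(r^2\,k\cdot\C^\mathrm{hom}\cdot k-\bbeta(\lambda))=0$ for $r\ge 0$, picking $c\in\ker(r^2\,k\cdot\C^\mathrm{hom}\cdot k-\bbeta(\lambda))$, rather than your intermediate-value argument on the full symbol $M(\xi)$; the resulting lifted Weyl sequence $\bu_n+\lambda\,\overline{\mathbf{b}_\lambda}\bu_n$ is the same.

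Case (iii) is where you diverge from the paper and where there is a genuine gap. The paper does \emph{not} build a Weyl sequence for $\A^\mathrm{hom}$ out of one for $\A_0$. It proves the contrapositive: if $\lambda\in\rho(\A^\mathrm{hom})$ then $\A_0-\lambda$ is onto. Given $\overline{\bg}\in\Lb^2_0(\Omega)$, one solves the full two-scale problem $(\A^\mathrm{hom}-\lambda)(\bu_1+\overline{\bu_0})=h\,\overline{\bg}$ for some fixed $h\in L^2(\R^d)$; subtracting $\lambda\,\overline{\mathbf{b}_\lambda}\bu_1$ from $\overline{\bu_0}$ and averaging in $x$ against $h$ then produces a preimage of $\overline{\bg}$ under $\A_0-\lambda$. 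This sidesteps the zero-mean issue entirely.

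Your direct route can be repaired, but not as you propose. The appeal to Theorem~\ref{thm:DspecUnionSpecIncl} does not yield what you need: that theorem identifies $\sigma(\A_0)$ with the spectra of the Dirichlet operators $\A_0^{\omega^k}$ on inclusions in $\R^d$, but an eigenfunction of $\A_0^{\omega^k}$ is a function on $\R^d$, not a random variable on $\Omega$, and there is no mechanism to manufacture from it an eigenfunction of $\A_0$ with vanishing expectation (think of the periodic case with a simple eigenvalue and nonzero-mean eigenfunction). The statement ``$\sigma_\mathrm{ess}(\A_0)\subseteq\sigma(\A_0|_{\ker\E})$'' is also ill-posed as written, since $\A_0$ does not preserve $\ker\E$. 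A correct patch is: for $\lambda\in\sigma_{\mathrm{ess}}(\A_0)$, take a \emph{singular} Weyl sequence $\overline{\bv_n}\rightharpoonup 0$, so that $\E[\overline{\bv_n}]\to 0$ automatically and your $\bw_n=\chi_n\overline{\bv_n}$ works without modification; for an isolated eigenvalue $\nu$ of finite multiplicity whose eigenspace contains no zero-mean vector, observe from the spectral expansion of $(\A_0-\lambda)^{-1}$ that $\E[\overline{\mathbf{b}_\lambda}]\sim(\nu-\lambda)^{-1}\sum_l\E[\bvarphi_l]\otimes\E[\bvarphi_l]$, so the largest eigenvalue of $\bbeta(\lambda)$ tends to $+\infty$ as $\lambda\nearrow\nu$, whence $\nu\in\overline{\{\bbeta\not<0\}}$ is already covered by case~(ii). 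The paper's solvability argument is cleaner and avoids this case distinction.
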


\begin{remark} \label{defbe}
Observe that if we denote by $\beta$ (non-bold beta) the scalar function
\begin{equation}
\label{non-bold beta}
\beta: [0,+\infty)\setminus \sigma(\A_0) \to \mathbb{R}, \qquad \beta(\lambda):= \max_{\hat{k} \in \mathbb{S}^{d-1}} \hat{k} \cdot \bbeta(\lambda) \hat{k},
\end{equation}
returning, for each $\lambda$ in its domain, the largest eigenvalue of the matrix $\bbeta(\lambda)$, 
then
\begin{equation}
\label{bold beta vs non-bold beta}
\overline{\{\lambda \ | \ \bbeta(\lambda)<0\}^\complement}=\{\lambda\ | \beta(\lambda)\geq 0\}. 
\end{equation}
The RHS of \eqref{bold beta vs non-bold beta} is precisely the expression appearing in the scalar analogue of Theorem~\ref{propositon spectrum Ahom} (although the two $\beta$'s are, in general, different functions).
\end{remark} 

\begin{proof}[Proof of Theorem~\ref{propositon spectrum Ahom}]
To begin with, let us show that
\begin{equation}
\label{proof spectrum Ahom equation 1}
\mathbb{R}\setminus \left( \sigma(\A_0) \cup \overline{\{\lambda \ | \ \bbeta(\lambda)<0\}^\complement} \right) \subset \rho(\A^\mathrm{hom})\,.
\end{equation}
The resolvent problem \eqref{3.10} can be recast as the following coupled system
\begin{subequations}
\begin{equation}
\label{proof spectrum Ahom equation 2a}
\int_{\R^d} \C^\mathrm{hom}\nabla \bu_1 \cdot \nabla \bvarphi_1-\lambda \int_{\R^d}(\bu_1+\E\left[ \bu_0\right])\cdot \bvarphi_1\\
= \int_{\R^d} \E\left[ \bdf\right]\cdot \bvarphi_1,
\end{equation}
\begin{equation}
\label{proof spectrum Ahom equation 2b}
 \E\left[ \C_0 \nabla_y \bu_0(x,\cdot) \cdot \nabla_y \bvarphi_0(x,\cdot)-\lambda (\bu_1(x)+\bu_0(x,\cdot))\cdot\bvarphi_0(x,\cdot)\right]\\
=  \E\left[\bdf(x,\cdot)\cdot \bvarphi_0(x,\cdot)\right], \forall x\in \R^d,
\end{equation}
\end{subequations}
for every $ \bvarphi_1+\bvarphi_0 \in \Vb$.
Since $\lambda \not \in \sigma(\A_0)$,  equation \eqref{proof spectrum Ahom equation 2b} is solvable with the unique solution given by
\begin{equation}
\label{proof spectrum Ahom equation 3}
\bu_0=(\mathcal{A}_0 -\lambda\operatorname{Id})^{-1}\bdf+\lambda \,\overline{\mathbf{b}_\l} \, \bu_1
\end{equation}
(recall that $\mathbf{b}_\l$ is the matrix-valued random variable defined in \eqref{definition of b}).
Substituting \eqref{proof spectrum Ahom equation 3} into \eqref{proof spectrum Ahom equation 2a} and using \eqref{beta matrix} we obtain
\begin{equation}
\label{proof spectrum Ahom equation 4}
\mathcal{A}_1 \bu_1-\bbeta(\lambda)\bu_1= \E[ \bdf ]+\lambda \E[ (\mathcal{A}_0 -\lambda\operatorname{Id})^{-1}\bdf ].
\end{equation}
Since $\mathcal{A}_1$ is non-negative and $\bbeta(\lambda)<0$, the operator $\mathcal{A}_1-\bbeta(\lambda)$ is positive and \eqref{proof spectrum Ahom equation 4} is uniquely solvable by the Fredholm alternative. This implies \eqref{proof spectrum Ahom equation 1}.

Next, let us assume that $\lambda \not \in \sigma(\A_0)$ and $\bbeta(\lambda)$ has at least one non-negative eigenvalue. The task is to show that $\lambda \in \sigma(\A^\mathrm{hom})$. To this end, it suffices to construct a sequence $\bu_n\in \Cb_0^\infty(\R^d)$, $n\in \mathbb{N}$,  $\|\bu_n\|_{\Lb^2}=1$, such that
\begin{equation}
\label{proof spectrum Ahom equation 5}
\lim_{n\to\infty}\|(\A_1-\bbeta(\lambda))\bu_n\|_{\Lb^2}=0.
\end{equation}
Indeed, in this case the sequence $\bu_n + \overline{\bv_n}$, with $\overline{\bv_n} : = \l   \overline{\mathbf{b}_\l}\bu_n \in \Lb^2(\R^d;\Hb^1_0(\Omega))$, is singular for the limit operator $\A^\hom$. In order to see this we first observe that for two functions $\bu\in\Hb^1(\R^d)$ and $\overline{\bv}  \in \Lb^2(\R^d;\Hb^1_0(\Omega))$  such that $\A_1 \bu=\bdf$ and $\A_0 \overline{\bv} = \overline{\bg}$ one has
\begin{equation*}
	\A^\hom \bu =(1-\overline{1_\Omega}) \E[(1-\overline{1_\Omega})]^{-1}\, \bdf ,
\end{equation*}
\begin{equation*}
	\A^\hom \overline{\bv} = \overline{\bg} - (1-\overline{1_\Omega}) \E[(1-\overline{1_\Omega})]^{-1}\E[\overline{\bg}].
\end{equation*}
Then elementary calculations give
\begin{equation*}
	\lim_{n\to\infty}\|(\A^\hom-\lambda)(\bu_n+\overline{\bv_n})\|_{\Lb^2(\R^d\times \Omega)}=0.
\end{equation*}

We seek $\bu_n$ in the form
\begin{equation}
\label{proof spectrum Ahom equation 6}
\bu_n(x)=\frac{\eta_n(x)\,e^{i r \, k  \cdot x}\, c}{\|\eta_n(x) c\|_{\Lb^2(\R^d)}},
\end{equation}
where $ k $ is a given vector in $\mathbb{S}^{d-1}$, the cut-off function $\eta_n$ is defined in  accordance with \eqref{def eta_L},
and $c\in \R^d$, $r\in \mathbb{R}$ are constants chosen as follows.  First, we choose $r$ in such a way that\footnote{Note that $ k \cdot \C \cdot  k $ is a $d\times d$ matrix with matrix elements
\begin{equation*}
\label{proof spectrum Ahom equation 8}
( k \cdot \C^\mathrm{hom} \cdot  k )_{\alpha\beta}=\C^\mathrm{hom}_{\alpha\mu\nu\beta}  k _\mu  k _\nu.
\end{equation*}
Due to the properties of the tensor $\C^\mathrm{hom} $,  the matrix $ k  \cdot\C^\mathrm{hom}\cdot   k $ is clearly symmetric and positive definite.
}
\begin{equation*}
\label{proof spectrum Ahom equation 9}
\det (r^2\, k \cdot \C^\mathrm{hom}  \cdot  k  -\bbeta(\lambda))=0.
\end{equation*}
This is clearly possible, because $\beta(\lambda)$ has at least one non-negative eigenvalue. The vector $c$ is then chosen in the kernel of $r^2\, k \cdot \C^\mathrm{hom}  \cdot  k  -\bbeta(\lambda)$,  i.e., 
\begin{equation}
\label{proof spectrum Ahom equation 10}
\left[r^2\, k \cdot \C^\mathrm{hom} \cdot  k  -\bbeta(\lambda)\right] c=0.
\end{equation}
It should be noted that $r$ depends on the choice of $ k $, whereas $c$ depends on the choice of both $ k $ and $r$.  

Then formulae  \eqref{bound grad eta}, \eqref{proof spectrum Ahom equation 6} and \eqref{proof spectrum Ahom equation 10} imply
\begin{equation*}
\label{proof spectrum Ahom equation 11}
\|(\A_1-\bbeta(\lambda))\bu_n\|_{\Lb^2(\R^d)}\le \frac{C}{n}.
\end{equation*}
which, in turn, gives us \eqref{proof spectrum Ahom equation 5}.

It only remains to show that $\sigma(\A_0)\subset \sigma(\A^\mathrm{hom})$. Let $\lambda\in \rho(\A^\mathrm{hom})$. In order to conclude, by the inverse mapping theorem    it is enough to show that for every $\overline{\bg}\in \Lb^2_0(\Omega)$ the equation
\begin{equation}
\label{proof spectrum Ahom equation 12}
(\A_0-\lambda)\overline{\bv}=\overline{\mathbf{g}}
\end{equation}
admits a solution.

Since $\lambda\not\in \sigma(\mathcal{A}^\mathrm{hom})$,  let $\bu=\bu_1+\overline{\bu_0}$ the solution to \eqref{proof spectrum Ahom equation 2a}, \eqref{proof spectrum Ahom equation 2b}  with $\overline{\bdf}=h \overline{\mathbf{g}}$, for some $h\in L^2(\R^d)$. Then, by \eqref{proof spectrum Ahom equation 2b},  we have
\begin{equation*}
(\A_0-\lambda)\overline{\bu_0}=\lambda \bu_1+h\,\overline{\mathbf{g}},
\end{equation*}
and hence\footnote{Recall that $\overline{\mathbf{b}_\lambda}$ is defined by \eqref{definition of b}. }
\begin{equation}
\label{proof spectrum Ahom equation 14}
(\A_0-\lambda)(\overline{\bu_0}-\lambda \overline{\mathbf{b}_\lambda} \bu_1)=h\,\overline{\mathbf{g}} .
\end{equation}
Equation \eqref{proof spectrum Ahom equation 14} now implies that 
\begin{equation*}
\label{proof spectrum Ahom equation 15}
\overline{\bv}:=\int_{\R^d}\frac{h}{\|h\|_{L^2}}(\overline{\bu_0}-\lambda \overline{\mathbf{b}_\lambda} \bu_1)
\end{equation*}
is a solution of \eqref{proof spectrum Ahom equation 12},
as one can establish by a straightforward calculation. This completes the proof.
\end{proof}

In plain English, the above theorem tells us that the spectrum of $\A^\mathrm{hom}$ is comprised of two parts: (i) those $\lambda$'s for which $\bbeta(\lambda)$ has at least one non-negative eigenvalue and (ii) the `microscopic resonances' $\sigma(\A_0)$.

The following statement is an immediate consequence of Theorem~\ref{thm 3.3}, \textcolor{black}{which can be proved by retracing the argument from \cite[Proposition~2.2]{zhikov2004}}.
\begin{corollary}\label{c 4.12}
We have 
\begin{equation}
\label{eq spectrum Ahom contained in lim spec}
\sigma(\A^\mathrm{hom})\subset \lim_{\epsilon\to 0}\sigma(\A^\epsilon)
\end{equation}
\textcolor{black}{almost surely}.
\end{corollary}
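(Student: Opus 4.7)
My plan is to argue by contradiction, leveraging Theorem~\ref{thm 3.3} as a form of strong (two-scale) resolvent convergence. Suppose, on a set of positive probability, there exists $\lambda\in\sigma(\A^\mathrm{hom})$ with $\lambda\not\in\lim_{\epsilon\to 0}\sigma(\A^\epsilon)$. Then one can extract $\epsilon_n\to 0$ and $\delta>0$ such that the open interval $(\lambda-\delta,\lambda+\delta)$ avoids $\sigma(\A^{\epsilon_n})$ for every $n$. Since each $\A^{\epsilon_n}$ is self-adjoint, the functional calculus delivers the uniform bound $\|(\A^{\epsilon_n}-\lambda)^{-1}\|_{\Lb^2\to\Lb^2}\le 1/\delta$, which is the engine driving the rest of the argument.

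Next, I would fix $\lambda_0<0$ and an arbitrary $\bdf\in\Hb$. Using a standard construction (writing $\bdf=\bdf_1(x)+\overline{\bdf_0}(x,\omega)$ and setting $\bdf^{\epsilon_n}(x):=\bdf_1(x)+\bdf_0(x,x/\epsilon_n,\omega)$), I obtain a strongly two-scale convergent sequence $\bdf^{\epsilon_n}\overset{2}{\to}\bdf$. Theorem~\ref{thm 3.3} at $\lambda_0$ then yields
\[
\bu^{\epsilon_n}:=(\A^{\epsilon_n}-\lambda_0)^{-1}\bdf^{\epsilon_n}\overset{2}{\to}\bu:=(\A^\mathrm{hom}-\lambda_0)^{-1}\bdf.
\]
Now set $\bh^n:=(\A^{\epsilon_n}-\lambda)^{-1}\bu^{\epsilon_n}$. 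The uniform bound on the resolvent gives $\|\bh^n\|_{\Lb^2}\le C/\delta$ independently of $n$, so along a subsequence $\bh^n\wtto \bh$ for some $\bh\in\Hb$. Rewriting
\[
(\A^{\epsilon_n}-\lambda_0)\bh^n=\bu^{\epsilon_n}+(\lambda-\lambda_0)\bh^n,
\]
the right-hand side two-scale weakly converges to $\bu+(\lambda-\lambda_0)\bh$. A second application of Theorem~\ref{thm 3.3} (this time with variable right-hand side, which is legitimate as long as the data converges two-scale weakly) identifies $\bh$ as the solution of $(\A^\mathrm{hom}-\lambda_0)\bh=\bu+(\lambda-\lambda_0)\bh$, i.e.\ $(\A^\mathrm{hom}-\lambda)\bh=\bu$.

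As $\bdf\in\Hb$ was arbitrary, $\bu$ ranges over the dense subspace $(\A^\mathrm{hom}-\lambda_0)^{-1}(\Hb)=\mathcal{D}(\A^\mathrm{hom})\subset\Hb$. Lower semicontinuity of the norm under two-scale weak convergence gives $\|\bh\|_\Hb\le\|\bu\|_\Hb/\delta$, so $\A^\mathrm{hom}-\lambda$ possesses a bounded left inverse on a dense subspace; combined with self-adjointness this forces $\lambda\in\rho(\A^\mathrm{hom})$, a contradiction. The only delicate point, and what I expect to be the main subtlety, is ensuring that the entire argument takes place on a single set of full probability rather than a $\bdf$-dependent one: I would handle this by selecting a countable dense family $\{\bdf_j\}\subset\Hb$, invoking Theorem~\ref{thm 3.3} for each $\bdf_j$ on its associated full-measure set $\Omega_j$, and intersecting over $j$ to obtain a single full-measure set on which the resolvent convergence holds for every $\bdf_j$ and, by norm continuity, for every $\bdf\in\Hb$.
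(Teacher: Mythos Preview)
Your argument is correct and is precisely the standard derivation of spectral lower semicontinuity from strong resolvent convergence, which is what the paper has in mind when it says the corollary is ``an immediate consequence of Theorem~\ref{thm 3.3}''. One small slip: in your final paragraph you produce a bounded \emph{right} inverse of $\A^{\mathrm{hom}}-\lambda$ on the dense set $\mathcal{D}(\A^{\mathrm{hom}})$ (since $(\A^{\mathrm{hom}}-\lambda)\bh=\bu$), not a left inverse; together with self-adjointness this indeed forces $\lambda\in\rho(\A^{\mathrm{hom}})$ as you claim.
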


\section{Spectral convergence}
\label{Spectral convergence}

The goal of this section is to characterise the RHS of \eqref{eq spectrum Ahom contained in lim spec}.  

Recall that the spectrum of the homogenised operator is characterised by Zhikov's $\bbeta$-matrix, cf.~Definition~\ref{definition beta matrix} and Theorem~\ref{propositon spectrum Ahom}.  As it turns out, the limiting spectrum $\lim_{\epsilon\to 0}\sigma(\A^\epsilon)$ is determined by a similar quantity, albeit scalar and nonlocal --- the $\beta_\infty$-function --- which ``feels'' the atypical local arrangements of inclusions.

\begin{definition}
\label{definition beta infinity}
We define the \emph{$\beta_\infty$-function} to be the function
\[
\beta_\infty:\R^{+} \backslash \sigma(\A_0) \times \Omega \to \R,
\]
\begin{equation}
\label{eq definition beta infinity}
\beta_\infty(\lambda, \omega):=\liminf_{L\to+\infty}\sup_{x\in \R^d} \sup_{ k \in \mathbb{S}^{d-1}} \frac{1}{|\square_x^L|}\int_{\square_x^L} \left(\lambda+\lambda^2\,  k \cdot \mathbf{b}_\lambda(y,\omega)  k  \right)  dy\,.
\end{equation}
\end{definition}

\begin{remark} 
	Since for every fixed $\l$ the random variable $\overline{\mathbf{b}_\lambda}$ is defined up to a set of probability measure zero (which in general depends on $\l$), in order \textcolor{black}{to} define $\beta_\infty$ as a function of $\l$ almost surely we need to work with an appropriate version (in the probabilistic sense of the word) of the process $(\overline{\mathbf{b}_\lambda})_{\l\notin \sigma(\A_0)}$. This version can be obtained by using the Kolmogorov--\v{C}entsov Theorem (see e.g. \cite[Theorem 2.8]{KS}) and the fact that
	$$\E\left[|\overline{\mathbf{b}_{\lambda_1}}-\overline{\mathbf{b}_{\lambda_2}}|^2 \right] \leq C(\lambda_1-\lambda_2)^2,$$
	whenever one stays away from the spectrum of $\A_0$. Note that in  \cite{CCV2} the authors adopted a different approach, defining an appropriate version of the scalar analogue of $\overline{\mathbf{b}_\lambda}$ through realisations, \textcolor{black}{cf.~\eqref{3.18}}.
\end{remark} 	

The following proposition lists some basic properties of $\beta_\infty$. Their proof  is a straightforward adaptation of the arguments presented in \cite[Lemma 5.9 and~Proposition 5.11]{CCV2} and is not provided here for the sake of brevity.

\begin{proposition}\label{prop properties of beta inf}
\begin{enumerate}[(i)]
	\item The limit inferior in the right hand side of \eqref{eq definition beta infinity} can be replaced by the limit \textcolor{black}{almost surely}. 
	\item 	The function 
	$$(\lambda,\omega) \mapsto \beta_\infty(\lambda,\omega) $$
	is deterministic almost surely, continuous on  $\R^{+} \backslash \sigma(\A_0)$, and strictly increasing on every connected component (interval) thereof.	
\end{enumerate}
\end{proposition}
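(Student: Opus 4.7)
The proof is a direct adaptation of the scalar argument of \cite[Lemma~5.9 and Proposition~5.11]{CCV2}; the one new element is the supremum over $k\in\mathbb{S}^{d-1}$, which can be handled uniformly thanks to the compactness of the sphere and the positivity of the relevant quadratic form in $k$.

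For part (i) and the deterministic part of (ii), I would first observe that, by stationarity of $\mathbf{b}_\lambda$, the random variable
\[
g_L(\omega):=\sup_{x\in\R^d}\sup_{k\in\mathbb{S}^{d-1}}\frac{1}{|\square_x^L|}\int_{\square_x^L}\bigl(\lambda+\lambda^2\,k\cdot\mathbf{b}_\lambda(y,\omega)\,k\bigr)\,dy
\]
is $T_z$-invariant for every $z\in\R^d$, since the action merely relabels $x\mapsto x-z$ in the outer supremum. Ergodicity then forces $g_L$ to be a.s.\ constant, and hence $\beta_\infty(\lambda,\cdot)$ deterministic almost surely. The upgrade from $\liminf$ to $\lim$ follows from a tiling argument: for $L'\gg L$ one partitions $\square_x^{L'}$ into subcubes of side $L$ and bounds the average of $\alpha_L(\omega,\cdot,k)$ over these tiles by $\sup_{x'}\alpha_L(\omega,x',k)$, with boundary corrections controlled via Lemma~\ref{lemma properties b lambda} (and Cauchy--Schwarz) to obtain $g_{L'}\le g_L+o(1)$ as $L\to\infty$; consequently $\limsup_L g_L\le \liminf_L g_L$.

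For continuity, I would use the second resolvent identity
\[
(\A_0-\lambda_1)^{-1}-(\A_0-\lambda_2)^{-1}=(\lambda_1-\lambda_2)(\A_0-\lambda_1)^{-1}(\A_0-\lambda_2)^{-1},
\]
together with the uniform $\Lb^2$-bound of Lemma~\ref{lemma properties b lambda} and the boundedness of the resolvent on compact subsets of $\R^+\setminus\sigma(\A_0)$, to obtain
\[
\|\mathbf{b}_{\lambda_1}(\cdot,\omega)-\mathbf{b}_{\lambda_2}(\cdot,\omega)\|_{\Lb^2(\square_x^L)}\le C\,|\lambda_1-\lambda_2|\,L^{d/2}
\]
uniformly in $x\in\R^d$. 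Cauchy--Schwarz then gives the uniform estimate $|\alpha_L(\omega,x,k,\lambda_1)-\alpha_L(\omega,x,k,\lambda_2)|\le C|\lambda_1-\lambda_2|$, whence continuity of $\beta_\infty$ follows by taking $\sup_{x,k}$ and passing to the limit in $L$.

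Strict monotonicity is the most delicate step, and I expect it to be the main obstacle. On each inclusion $\omega^K$, expanding $\mathbf{b}_\lambda$ in the orthonormal basis $\{\bpsi_j^K\}$ of eigenfunctions of $\A_0^{\omega^K}$ with corresponding eigenvalues $\{\mu_j^K\}$, and using the Parseval identity $\sum_j|\langle k,\bpsi_j^K\rangle_{\Lb^2(\omega^K)}|^2=|\omega^K|$ valid for $k\in\mathbb{S}^{d-1}$, one obtains
\[
\frac{1}{|\omega^K|}\int_{\omega^K}\bigl(\lambda+\lambda^2\,k\cdot\mathbf{b}_\lambda\,k\bigr)\,dy=\frac{1}{|\omega^K|}\sum_j|\langle k,\bpsi_j^K\rangle_{\Lb^2(\omega^K)}|^2\,\frac{\lambda\mu_j^K}{\mu_j^K-\lambda}.
\]
Differentiating in $\lambda$, the resulting terms $|\langle k,\bpsi_j^K\rangle|^2(\mu_j^K)^2/(\mu_j^K-\lambda)^2$ are all nonnegative. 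Combining with the matrix contribution (where $\mathbf{b}_\lambda\equiv 0$ and the derivative of the integrand equals $1$), and invoking the uniform lower bound $\mu_1^K\ge c_0>0$ afforded by Assumption~\ref{main assumption 2} (uniform Korn--Poincar\'e on the finite family of templates), one deduces
\[
\partial_\lambda\alpha_L(\omega,x,k,\lambda)\ge c>0
\]
uniformly in $(x,k,L)$ on any compact subinterval of $\R^+\setminus\sigma(\A_0)$. A near-supremum argument --- choose $(x_n,k_n)$ with $\alpha_L(\omega,x_n,k_n,\lambda_1)\to g_L(\lambda_1)$ and apply the mean value theorem to $\lambda\mapsto\alpha_L(\omega,x_n,k_n,\lambda)$ --- then yields $g_L(\lambda_2)\ge g_L(\lambda_1)+c(\lambda_2-\lambda_1)$, and strict monotonicity of $\beta_\infty$ follows upon passing to the limit in $L$.
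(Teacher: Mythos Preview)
Your proposal is correct and aligns with the paper's own treatment: the paper does not give a proof but simply states that the argument is a straightforward adaptation of \cite[Lemma~5.9 and Proposition~5.11]{CCV2}, which is precisely your opening remark, and your sketch of the adaptation (translation invariance plus ergodicity for the deterministic claim, a tiling/subadditivity argument for the existence of the limit, the resolvent identity for continuity, and the eigenfunction expansion with uniform positivity of the $\lambda$-derivative for strict monotonicity) is exactly the route one takes in carrying out that adaptation. One small imprecision: in the tiling step the error should be phrased as $g_{L'}\le g_L+O(L/L')$ uniformly in $L'$, so that $\limsup_{L'\to\infty} g_{L'}\le g_L$ for each fixed $L$, rather than ``$o(1)$ as $L\to\infty$''; but the underlying argument is correct.
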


In view of Proposition~\ref{prop properties of beta inf}, in what follows we suppress the dependence of $\beta_\infty$ on $\omega$ in our notation and write simply $\beta_\infty(\l)$.

\begin{remark} 
Note that for $\lambda \geq 0$ the largest eigenvalue $\beta(\lambda)$ of the matrix $\bbeta(\lambda)$ defined in accordance with \eqref{non-bold beta} satisfies the following bound: 
\begin{equation} 
\label{beta less than beta infinity}
\begin{split}
 \beta(\lambda) &= \max_{\hat k\in \mathbb{S}^{d-1}} \hat k \cdot \bbeta(\lambda) \hat{k}\\ 
 &
 = \max_{\hat k\in \mathbb{S}^{d-1}}\lim_{L\to+\infty}  \frac{1}{|\square_x^L|}\int_{\square_x^L} \left(\lambda+\lambda^2\, \hat k\cdot \mathbf{b}_\lambda(T_y\omega) \hat k \right)\textcolor{black}{dy}\\
 &
 = 
 \lim_{L\to+\infty} \max_{\hat k\in \mathbb{S}^{d-1}} \frac{1}{|\square_x^L|}\int_{\square_x^L} \left(\lambda+\lambda^2\, \hat k\cdot \mathbf{b}_\lambda(T_y\omega) \hat k \right)\textcolor{black}{dy}\\
  &
  \overset{\eqref{eq definition beta infinity}}{\leq} \beta_{\infty} (\lambda).
\end{split}
\end{equation} 		
Here $x$ is an arbitrary point in $\R^d$, and in writing the second equality we used the Ergodic Theorem. 
Formula \eqref{beta less than beta infinity} provides a generalisation to systems of its scalar counterpart \cite[Remark 5.7]{CCV2}, with the Zhikov $\beta$-function being replaced by the largest eigenvalue of the $\bbeta$-matrix (denoted here by $\beta(\lambda)$ by analogy) --- cf.~Remark~\ref{defbe}.
\end{remark}

A key role in the upcoming analysis will be played by the set
\begin{equation}
\label{G}
\mathcal{G}:=\sigma(\A_0) \cup \overline{\{\lambda:\beta_\infty(\lambda)\ge 0\}}\,,
\end{equation}
compare with \eqref{spectrum Ahom} and \eqref{bold beta vs non-bold beta}. 

\subsection{\textcolor{black}{Outer} bound for the limiting spectrum}
\label{Upper bound for the limiting spectrum}

Our first result provides an \textcolor{black}{outer} bound for the limiting spectrum.

\begin{theorem}
\label{theorem limiting spectrum sub G}
We have
\begin{equation*}
\label{theorem limiting spectrum sub G equation 1}
\lim_{\epsilon\to 0}\sigma(\A^\epsilon)\subseteq \mathcal{G}
\end{equation*}
\textcolor{black}{almost surely}.
\end{theorem}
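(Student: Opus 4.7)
The plan is to argue by contradiction. Suppose $\lambda \in \lim_{\epsilon\to 0}\sigma(\A^\epsilon) \setminus \mathcal{G}$. Then $\lambda\notin\sigma(\A_0)$ and, by continuity and strict monotonicity of $\beta_\infty$ on connected components of $\R^+\setminus\sigma(\A_0)$ (Proposition~\ref{prop properties of beta inf}), there exists $c_0>0$ with $\beta_\infty(\mu)\leq -c_0$ on a neighbourhood of $\lambda$. By self-adjointness of $\A^{\epsilon_n}$ and the definition of $\lim_{\epsilon\to 0}\sigma(\A^\epsilon)$, extract a Weyl sequence $\bu^n\in\mathrm{dom}(\A^{\epsilon_n})$ with $\epsilon_n\to 0$, $\|\bu^n\|_{\Lb^2(\R^d)}=1$, and $\bdf^n:=(\A^{\epsilon_n}-\lambda)\bu^n\to 0$ in $\Lb^2(\R^d)$. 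The objective is to combine the inclusion-wise two-scale decomposition of $\bu^n$ with the nonlocal negativity afforded by $\beta_\infty$ to derive a contradiction.

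First, testing the equation with $\bu^n$ yields the energy bound $\|\sym\nabla\bu^n\|_{\Lb^2(\M^{\epsilon_n})} + \epsilon_n\|\sym\nabla\bu^n\|_{\Lb^2(\I^{\epsilon_n})}\leq C$. Applying Theorem~\ref{theorem extension} with the uniform constants from Proposition~\ref{proposition uniform C} (after a rescaling), decompose $\bu^n=\tilde\bu^n+\bw^n$, with $\tilde\bu^n$ uniformly bounded in $\Hb^1(\R^d)$, $\C_0$-harmonic on each $\epsilon_n\omega^k$, and $\bw^n\in\Hb^1_0(\I^{\epsilon_n})$ satisfying $\|\bw^n\|_{\Lb^2}+\epsilon_n\|\nabla\bw^n\|_{\Lb^2}\leq C$. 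On each inclusion $\bw^n$ solves the rescaled Dirichlet problem $-\epsilon_n^2\operatorname{div}\C_0\nabla\bw^n-\lambda\bw^n=\bdf^n+\lambda\tilde\bu^n$, and since $\lambda\notin\sigma(\A_0^{\epsilon_n\omega^k})$ by Theorem~\ref{thm:DspecUnionSpecIncl}, Lemma~\ref{lemma b in physical space} yields the leading-order ansatz
\[
\bw^n(x)=\lambda\,\mathbf{b}_\lambda(x/\epsilon_n,\omega)\,\tilde\bu^n(x) + r^n(x),
\]
where $r^n$ is controlled inclusion-by-inclusion by the oscillation of $\tilde\bu^n$ and by $\bdf^n$.

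A standard mass-concentration argument (together with Lemma~\ref{lemma properties b lambda} to ensure that the mass cannot be entirely carried by $\bw^n$) produces $x^n\in\R^d$ and a fixed $L>0$ with $\|\tilde\bu^n\|^2_{\Lb^2(\square^L_{x^n})}\geq c>0$. Now test the equation with $\bvarphi^n:=\eta_L^2(\cdot-x^n)\tilde\bu^n$, expand $\bu^n=\tilde\bu^n+\bw^n$, substitute the ansatz above, and control cut-off commutator terms via \eqref{bound grad eta} together with Lemma~\ref{lemma properties b lambda}. After the cancellations afforded by the $\C_0$-harmonicity of $\tilde\bu^n$ in $\I^{\epsilon_n}$, one is left with
\[
\langle\bdf^n,\bvarphi^n\rangle=\int_{\M^{\epsilon_n}}\eta_L^2\,\C_1\sym\nabla\tilde\bu^n\cdot\sym\nabla\tilde\bu^n - \int_{\R^d}\eta_L^2\,\tilde\bu^n\cdot\bigl(\lambda I + \lambda^2\mathbf{b}_\lambda(\cdot/\epsilon_n,\omega)\bigr)\tilde\bu^n\,dx + o(1).
\]
To estimate the second integral from above, partition $\square^L_{x^n}$ into sub-cubes at an intermediate scale $\ell_n$ with $\epsilon_n\ll\ell_n\ll L$, approximate $\tilde\bu^n$ by its sub-cube averages via Poincar\'e, and apply Definition~\ref{definition beta infinity}: on each sub-cube the rescaled matrix-valued average $\epsilon_n^{-d}\int_{\mathrm{sub}}(\lambda I+\lambda^2\mathbf{b}_\lambda(\cdot/\epsilon_n,\omega))$ has largest eigenvalue at most $\beta_\infty(\lambda)+o(1)\leq -c_0/2$, being an average over a hypercube of side $\ell_n/\epsilon_n\to\infty$. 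Summing yields an upper bound of $(-c_0/2)\|\tilde\bu^n\|^2_{\Lb^2(\square^L_{x^n})}\leq -c_0 c/2$, whence $\langle\bdf^n,\bvarphi^n\rangle\geq c_0 c/2 + o(1)>0$, contradicting $\bdf^n\to 0$.

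The main obstacle is the multi-scale step reconciling the averaging scale $\ell_n/\epsilon_n\to\infty$ needed for the supremum in Definition~\ref{definition beta infinity} with the $\Hb^1$-regularity of $\tilde\bu^n$ at scale $\ell_n$, together with the inclusion-wise cancellations required to collapse the inclusion integrals into the clean integrand $\lambda I + \lambda^2\mathbf{b}_\lambda(\cdot/\epsilon_n,\omega)$. Balancing the Poincar\'e errors on sub-cubes against the cut-off commutators and the remainder $r^n$ from the ansatz constitutes the technical core of the proof.
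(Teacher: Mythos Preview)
Your overall strategy matches the paper's: take a Weyl sequence, split via the extension into a stiff-matrix part $\tilde\bu^n$ and an inclusion part $\bw^n$, identify $\bw^n$ with $\lambda\mathbf{b}_\lambda\tilde\bu^n$ up to a small remainder, and then test the equation with (a cut-off of) $\tilde\bu^n$ to extract the quadratic form $\tilde\bu^n\cdot(\lambda I+\lambda^2\mathbf{b}_\lambda)\tilde\bu^n$, whose large-scale averages are controlled by $\beta_\infty$. The paper implements this by shifting the relevant cube to the origin and passing to weak limits (so the product $\mathbf{b}_\lambda\tilde\bu^n$ is handled via an inclusion-scale averaging operator $P^\epsilon$ and the weak limit $\bg_{\lambda_0}$ of the shifted $\mathbf{b}_\lambda$), whereas you propose direct sub-cube averaging at an intermediate scale $\ell_n$. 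Both are viable routes to the same inequality.

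There is, however, a genuine gap in your localization step. You fix $L$ via a ``standard mass-concentration argument'' and then write the key identity with ``$+\,o(1)$''. But the cut-off commutators coming from $\nabla\eta_L$ are of size $O(1/L)$, not $o(1)$ as $n\to\infty$, so your contradiction actually reads
\[
0=\lim_n\langle\bdf^n,\bvarphi^n\rangle\ \geq\ \tfrac{c_0}{2}\,\|\eta_L\tilde\bu^n\|_{\Lb^2}^2 - \tfrac{C}{L} - o_n(1),
\]
and to close it you need $L$ large \emph{relative to} the local mass $c=c(L)$. Nothing in your argument prevents $c(L)\to 0$ as $L\to\infty$ (the mass of $\tilde\bu^n$ may spread out). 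The paper handles precisely this via the lattice selection \eqref{proof limiting spectrum sub G equation 4}: one picks a cube where the local mass simultaneously dominates (up to a fixed factor $3^{d+1}$) the local approximate-eigenfunction error \emph{and} the local symmetric gradient on the tripled cube, and then \emph{normalizes} by the local mass. After this normalization all constants in the localized problem are uniform in $L$ and $\epsilon$, so one may safely send $L\to\infty$ at the end.

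A secondary point: your sub-cube averaging produces a quadratic remainder of the form $\sum_j\int_{Q_j}(\tilde\bu^n-\overline{\bu_j})\cdot\mathbf{b}_\lambda(\cdot/\epsilon_n)(\tilde\bu^n-\overline{\bu_j})$, which is not obviously small using only the $\Lb^2$ bound of Lemma~\ref{lemma properties b lambda}. The paper sidesteps this by averaging at the \emph{inclusion} scale (the operator $P^\epsilon$), so that $\mathbf{b}_\lambda$ pairs against a constant on each inclusion and only the $\Lb^2$ bound is needed.
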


\begin{proof}
Let $\lambda_0\in \lim_{\epsilon\to 0}\sigma(\A^\epsilon)$.  This means that there exist $\lambda_\epsilon\in \sigma(\A^\epsilon)$ and $\bu^\epsilon\in \mathcal{D}(\A^\epsilon)$, $\|\bu^\epsilon\|_{\Lb^2(\R^d)}=1$, such that
\begin{equation*}
\label{proof limiting spectrum sub G equation 1}
\lim_{\epsilon\to 0} \lambda_\epsilon=\lambda_0,
\end{equation*}
\begin{equation}
\label{proof limiting spectrum sub G equation 2}
\|\A^\epsilon \bu^\epsilon-\lambda_\epsilon \bu^\epsilon\|_{\Lb^2(\R^d)}=:\delta_\epsilon=
o(1) \quad \text{as}\quad \epsilon\to 0.
\end{equation}
\textcolor{black}{In what follows, we will assume that $\delta_\varepsilon\neq 0$, i.e., $\lambda_\varepsilon$ is not an eigenvalue for all sufficiently small $\varepsilon$. The modifications to the proof for the case where $\lambda_\varepsilon$ is an eigenvalue for some (or all) $\varepsilon$ are straightforward.}

If $\lambda_0\in \sigma(\A_0)$ there is nothing to prove. Hence, without loss of generality, let us assume that $\lambda_0\not\in \sigma(\A_0)$.  The task at hand is then to show that 
\begin{equation}
	\label{task proof theorem upper bound}
	\beta_\infty(\lambda_0)\ge 0\,.
\end{equation}

For the future reference we note that \eqref{proof limiting spectrum sub G equation 2} immediately implies the following bound on the symmetrised gradient of $\bu^\epsilon$:
\begin{equation}
	\label{proof limiting spectrum sub G equation 6}
	\|\sym \nabla \bu^\epsilon\|_{[\Lb^2(\M^\epsilon)]^{d}}+\epsilon\|\sym \nabla \bu^\epsilon\|_{[\Lb^2(\I^\epsilon)]^{d}}\le C.
\end{equation}

The idea of the proof goes as follows. Intuitively, one can think of the soft inclusions as micro-resonators, and the local average of the term $\lambda^2\,   \mathbf{b}_\lambda(y,\omega)$ in \eqref{eq definition beta infinity} represents a local {\it resonant} (or {\it anti-resonant}, depending on the sign) contribution from said inclusions. Thus, $\beta_\infty(\l)$ quantifies the maximal possible resonant contribution of the soft component on large scales. One expects that this resonant behaviour is present in the structure of the approximate eigenfunctions $\bu^\e$ introduced above. Therefore, we devise a procedure which will bring this feature to the surface in the limit $\epsilon\to 0$, and relate it to  $\beta_\infty(\l)$. We argue that one can restrict the attention to large fixed size cubes which carry significant proportion of the energy of $\bu^\e$, at the same time keeping the error in the quasimode approximation relatively small. Once one has chosen the cubes appropriately, it only remains to shift such cubes to the origin in order to utilise a compactness argument. We proceed with this plan in mind.

Let $L>1$. We claim that there exists $y=y_{L,\epsilon}\in \R^d$ such that
\begin{equation}
\label{proof limiting spectrum sub G equation 3}
\|\bu^\epsilon\|_{\Lb^2(\square_{y}^L)}>0
\end{equation}
and
\begin{equation}
\label{proof limiting spectrum sub G equation 4}
\frac1{3^{d+1}\delta_\epsilon^2}\|\A^\epsilon \bu^\epsilon-\lambda_\epsilon \bu^\epsilon\|^2_{\Lb^2(\square_{y}^{3L})}
+
\frac{1}{3^{d+1}}\frac{\|\mathbbm{1}_{\M^\epsilon}\sym\nabla \bu^\epsilon\|^2_{[\Lb^2(\square_{y}^{3L})]^{d}}}{\|\mathbbm{1}_{\M^\epsilon}\sym \nabla \bu^\epsilon\|^2_{[\Lb^2(\R^d)]^{d}}}
+
\frac{1}{3^{d+1}}\|\bu^\epsilon\|^2_{\Lb^2(\square_{y}^{3L})}
\le \|\bu^\epsilon\|^2_{\Lb^2(\square_{y}^L)}\,.
\end{equation}
Indeed,
\begin{multline}
\label{proof limiting spectrum sub G equation 5}
\frac{1}{3^{d+1}}\sum_{z\in L \mathbb{Z}}\left[\frac{\|\A^\epsilon \bu^\epsilon-\lambda_\epsilon \bu^\epsilon\|^2_{\Lb^2(\square_{z}^{3L})}}{\|\A^\epsilon \bu^\epsilon-\lambda_\epsilon \bu^\epsilon\|^2_{\Lb^2(\R^d)}}
+
\frac{\|\mathbbm{1}_{\M^\epsilon}\sym \nabla \bu^\epsilon\|^2_{[\Lb^2(\square_{z}^{3L})]^{d}}}{\|\mathbbm{1}_{\M^\epsilon}\sym \nabla \bu^\epsilon\|^2_{[\Lb^2(\R^d)]^{d}}}
+
\|\bu^\epsilon\|^2_{\Lb^2(\square_{z}^{3L})}\right]
\\
=
1
=
\sum_{z\in L \mathbb{Z}}\|\bu^\epsilon\|^2_{\Lb^2(\square_{z}^L)}.
\end{multline}
Then, if we restrict the summation in \eqref{proof limiting spectrum sub G equation 5} to the lattice points $z$ such that $\|\bu^\epsilon\|_{\Lb^2(\square_{z}^L)}\ne 0$,  \eqref{proof limiting spectrum sub G equation 4} is an immediate consequence of the equality between real series with strictly positive terms\footnote{Of course, there exists at least one $z\in L\mathbb{Z}$ such that $\|\bu^\epsilon\|_{\Lb^2(\square_{z}^L)}\ne 0$ because $\bu^\epsilon$ is normalised in $\Lb^2$.}. 

We define a new sequence of vector-functions $\bw^\epsilon_L$, normalised in $\Lb^2(\square_0^L)$,  as
\begin{equation}
\label{proof limiting spectrum sub G equation 7}
\bw_L^\epsilon(x):=\frac{\bu^\epsilon(x+y)}{\|\bu^\epsilon\|_{\Lb^2(\square_{y}^L)}}\,.
\end{equation}
Recall that the shift $y$ is $\epsilon$-dependent. At this point it is convenient to define the set of shifted inclusions 
$$
\e\hat{\o} = \bigsqcup_{k\in \N} \e \hat\omega^k : = \e\o - y,
$$
and   the ``shifted'' operator $\hat \A^\epsilon$ as the analogue of $\A^\epsilon$ with shifted coefficients $\C^\e(x+y,\omega)$.  Similarly, we shall denote by $\hat{\mathbbm{1}}_{\M^\epsilon}$ and $\hat{\mathbbm{1}}_{\I^\epsilon}$ the shifted characteristic functions of $\M^\epsilon$ and $\I^\epsilon$.
 Then, the definition \eqref{proof limiting spectrum sub G equation 7} combined with the bounds \eqref{proof limiting spectrum sub G equation 4} and \eqref{proof limiting spectrum sub G equation 6} gives us
\begin{equation}
	\label{proof limiting spectrum sub G equation 16}
	\|\hat{\mathbbm{1}}_{\M^\epsilon}\sym \nabla \bw_L^\epsilon\|^2_{[\Lb^2(\square_{0}^{3L})]^{d}}
	+
	\|\bw^\epsilon_L\|_{\Lb^2(\square_{0}^{3L})}
	\le C
\end{equation}
and
\begin{equation}
	\label{proof limiting spectrum sub G equation 17}
	\|\hat \A^\epsilon \bw_L^\epsilon-\lambda_\epsilon \bw_L^\epsilon\|_{\Lb^2(\square_0^{3L})} \le  C\,\delta_\epsilon\,.
\end{equation}

Let $\tilde \bu^\epsilon$ be the extension of $\bu^\epsilon|_{\M^\epsilon}$ into $\I^\epsilon$ as per Theorem~\ref{theorem extension}. Clearly, the corresponding extension $\tilde \bw_L^\epsilon$ of $\bw_L^\epsilon$ is given by $\frac{\tilde \bu^\epsilon(x+y)}{\| \bu^\epsilon\|_{\Lb^2(\square_{y}^L)}}$. We denote  $\bv^\epsilon_L:=\bw^\epsilon_L-\tilde{\bw}^\epsilon_L$ --- this function is supported only on the (shifted) inclusions and  carries the information about the resonant effect of the soft component.

First, we will derive some basic bounds for $\tilde{\bw}^\epsilon_L$ and $\bv^\epsilon_L$. Recalling that the extension $\tilde{\bw}^\epsilon$ is chosen to be $\C_0$-harmonic on the set of inclusions, we see that $\bv^\epsilon_L$  satisfies the identity
\begin{equation}
	\label{4.14}
	(-\e^2\operatorname{div} \C_0 \nabla-\lambda_\epsilon)\bv^\epsilon_L=\lambda_\epsilon \tilde \bw^\epsilon_L +\left(-\e^2\operatorname{div} \C_0 \nabla-\lambda_\e \right)\bw^\epsilon_L, \qquad \mbox{in } \e\hat\omega,
\end{equation}
whose rescaled version reads
\begin{equation}
\label{proof limiting spectrum sub G equation 7bis}
(-\operatorname{div} \C_0 \nabla-\lambda_\epsilon)\bv^\epsilon_L(\epsilon x)=\lambda_\epsilon \tilde \bw^\epsilon_L(\epsilon x) +\left(-\operatorname{div} \C_0 \nabla-\lambda_\e \right)\bw^\epsilon_L(\epsilon x), \qquad x\in \hat\omega.
\end{equation}
The latter immediately implies the bound
\begin{equation}
\label{proof limiting spectrum sub G equation 7ter}
\|\bv^\epsilon_L\|_{\Lb^2(\epsilon\o^k)}\le \frac{\|\lambda_\epsilon \tilde \bw^\epsilon_L\|_{\Lb^2(\epsilon\o^k)}+\delta_\epsilon}{\operatorname{dist}(\lambda_\e, \sigma(\A_0))}, \qquad \forall k=1,2,\ldots.
\end{equation}
(Recall the definition of $\delta_\epsilon$ \eqref{proof limiting spectrum sub G equation 2}.)
With  account of \eqref{proof limiting spectrum sub G equation 16}, the extension Theorem~\ref{theorem extension} gives us
\begin{equation}
	\label{proof limiting spectrum sub G equation 18}
	\|\tilde \bw^\epsilon_L\|_{\Hb^1(\square_0^{5L/2})}\le C.
\end{equation}
Furthermore, \eqref{proof limiting spectrum sub G equation 7ter} and \eqref{proof limiting spectrum sub G equation 2} imply that $\tilde \bw^\epsilon_L$ does not vanish in the limit:
\begin{equation}
	\label{proof limiting spectrum sub G equation 19}
	0<C\le \| \tilde \bw^\epsilon_L\|_{\Lb^2(\square_0^{2L})}\,.
\end{equation}
Finally, testing \eqref{4.14} with $\bv^\epsilon_L$ on each inclusion and taking into account  \eqref{proof limiting spectrum sub G equation 7ter},  \eqref{proof limiting spectrum sub G equation 18} and  \eqref{proof limiting spectrum sub G equation 17} we arrive at 
\begin{equation}
	\label{proof limiting spectrum sub G equation 23}
	\epsilon \| \sym \nabla \bw^\epsilon_L\|_{[\Lb^2(\square_0^{2L})]^{d}}\le C.
\end{equation}

Let $\bw_L^0$ and $\bg_\lambda$ be defined as the weak limits
\begin{equation}
\label{proof limiting spectrum sub G equation 8}
\tilde \bw_L^\epsilon \rightharpoonup \bw_L^0\ne 0 \quad \text{in} \quad \Hb^1(\square_0^{5L/2})
\end{equation}
and
\begin{equation}
\label{proof limiting spectrum sub G equation 9}
\mathbf{b}_{\lambda_0}(\epsilon^{-1}(\cdot+y), \omega) \rightharpoonup \bg_{\lambda_0} \quad \text{in} \quad [\Lb^{2}(\square_0^{2L})]^{d},
\end{equation}
respectively, \textcolor{black}{possibly up to the extraction of a subsequence}.

We claim that
\begin{enumerate}[(a)]
\item
the limits \eqref{proof limiting spectrum sub G equation 8} and \eqref{proof limiting spectrum sub G equation 9} exist (indeed, the first follows from \eqref{proof limiting spectrum sub G equation 18} and \eqref{proof limiting spectrum sub G equation 19}, and the second is a consequence of Lemma~\ref{lemma properties b lambda});

\item
we have (recall definition \eqref{def eta_L}) 
\begin{equation}
\label{proof limiting spectrum sub G equation 11}
\lim_{\epsilon\to 0} \int_{\square_0^{2L}} \lambda_\epsilon \bw_L^\epsilon \cdot \tilde \bw_L^\epsilon \eta_{2L} \ge -C L^{-1}
\end{equation}
and

\item
we have\footnote{Note that here the order of the terms matters.}
\begin{equation}
\label{proof limiting spectrum sub G equation 10}
\bv^\epsilon_L = \bw^\epsilon_L-\tilde \bw^\epsilon_L \rightharpoonup \lambda_0 \,\bg_{\lambda_0}\,\bw_L^0 \quad \text{in} \quad \Lb^2(\square_0^{2L}).
\end{equation}
\end{enumerate}
 Suppose this is the case. Then we have
\begin{equation}
\label{proof limiting spectrum sub G equation 12}
\begin{split}
-\frac{C}{L}
&
\overset{(b)}{\le} \lim_{\epsilon\to 0} \int_{\square_0^{2L}} \lambda_\epsilon \bw_L^\epsilon \cdot \tilde \bw_L^\epsilon \eta_{2L}
\\
&
=
\lim_{\epsilon\to 0} \int_{\square_0^{2L}} \lambda_\epsilon \left[\tilde \bw_L^\epsilon+\bv_L^\epsilon \right] \cdot \tilde \bw_L^\epsilon \eta_{2L} 
\\
&
\overset{(a)+(c)}{=}
\int_{\square_0^{2L}} \lambda_0 \left(1+\lambda_0 \,\bg_{\lambda_0}  \right)\bw_L^0 \cdot \bw_L^0 \eta_{2L} 
\\
&
\le
 C\,\beta_\infty(\lambda_0).
\end{split}
\end{equation}
Here we have used that \eqref{proof limiting spectrum sub G equation 8} implies strong convergence in $\Lb^2$.
Furthermore, in the last step we applied the estimate
\begin{equation*}
\label{proof limiting spectrum sub G equation 13}
\lambda_0+\lambda_0^2\,  k \cdot \bg_{\lambda_0}(x)\, k \le \beta_\infty(\lambda_0) \quad \text{for a.e.}\quad x\in \square_0^{2L}, \ \forall  k \in \mathbb{S}^{d-1},
\end{equation*}
which easily follows\footnote{Indeed, taking into account Proposition \ref{prop properties of beta inf} the assertion follows by integrating $\lambda_0+\lambda_0^2\,  k \cdot \mathbf{b}_{\lambda_0}(\epsilon^{-1}(\cdot+y), \omega) \, k$ over an arbitrary fixed cube contained in
$\square^{2L}_0$ and passing to the limit.} from \eqref{eq definition beta infinity},  to 
\begin{multline*}
\label{proof limiting spectrum sub G equation 14}
	\int_{\square_0^{2L}} \lambda_0 \left(1+\lambda_0 \,\bg_{\lambda_0} \right)\bw_L^0 \cdot \bw_L^0 \,\eta_{2L} 
	\\
	=\int_{\square_0^{2L}}\mathbbm{1}_{\{\bw^0_L\ne 0\}}\left(\lambda_0 + \lambda_0^2 \frac{\bw^0_L}{|\bw^0_L|}\cdot \bg_{\lambda_0}\frac{\bw^0_L}{|\bw^0_L|}\right) |\bw^0_L|^2\, \eta_{2L}
	\\
	\leq \beta_\infty(\l_0)\int_{\square_0^{2L}}|\bw^0_L|^2\, \eta_{2L}.
\end{multline*}
Now, the integral on the RHS of the latter is bounded uniformly in $L$ due to \eqref{proof limiting spectrum sub G equation 18} and \eqref{proof limiting spectrum sub G equation 8}.
Since  \eqref{proof limiting spectrum sub G equation 12} holds for an arbitrary $L$ we conclude the validity of \eqref{task proof theorem upper bound}. 

\

The remainder of the proof will be concerned with demonstrating  (b) and (c).


\

Let us  prove (b). Multiplying
\begin{equation}
\label{proof limiting spectrum sub G equation 21}
(\hat \A^\epsilon -\lambda_\epsilon)\bw_L^\epsilon=:\mathbf{f}_L^\epsilon
\end{equation}
by $\eta_{2L} \tilde \bw_L^\epsilon$ and integrating over $\square_0^{2L}$,
one gets
\begin{multline}
\label{proof limiting spectrum sub G equation 22}
\int_{\square_0^{2L}}  \hat{\mathbbm{1}}_{\M^\epsilon}\, \C_1 \nabla \bw_L^\epsilon\cdot \nabla( \eta_{2L}  \bw_L^\epsilon)+\epsilon^2\int_{\square_0^{2L}} \hat{\mathbbm{1}}_{\I^\epsilon} \,\C_0 \nabla \bw_L^\epsilon\cdot \nabla( \eta_{2L} \tilde \bw_L^\epsilon)
\\
=\lambda_\epsilon\int_{\square_0^{2L}} \eta_{2L}\,\bw_L^\epsilon \cdot \tilde \bw_L^\epsilon
+
\int_{\square_0^{2L}} \eta_{2L} \,\mathbf{f}_L^\epsilon \cdot \tilde \bw_L^\epsilon\,.
\end{multline}

We estimate the second integral on LHS  via \eqref{proof limiting spectrum sub G equation 18} and \eqref{proof limiting spectrum sub G equation 23}
to obtain
\begin{equation}
\label{proof limiting spectrum sub G equation 24}
\lim_{\epsilon\to 0}\left| \epsilon^2\int_{\square_0^{2L}} \hat{\mathbbm{1}}_{\I^\epsilon}\,\C_0 \nabla \bw_L^\epsilon\cdot \nabla( \eta_{2L} \tilde \bw_L^\epsilon)\right|=0.
\end{equation}
Furthermore,  \eqref{proof limiting spectrum sub G equation 18} and the properties of $\eta_{2L}$ imply
\begin{equation*}
\label{proof limiting spectrum sub G equation 25}
\liminf_{\epsilon\to 0}\int_{\square_0^{2L}} \hat{\mathbbm{1}}_{\M^\epsilon} \, \C_1 \nabla \bw_L^\epsilon\cdot \nabla( \eta_{2L}  \bw_L^\epsilon) \ge -\frac{C}{L}\,
\end{equation*}
for some $C$ independent of $L$.
Finally, \eqref{proof limiting spectrum sub G equation 17} implies
\begin{equation}
\label{proof limiting spectrum sub G equation 26}
\lim_{\epsilon\to 0}\left|\int_{\square_0^{2L}}  \eta_{2L} \mathbf{f}_L^\epsilon\cdot \tilde \bw_L^\epsilon\right|=0.
\end{equation}
Combining \eqref{proof limiting spectrum sub G equation 22} and \eqref{proof limiting spectrum sub G equation 24}--\eqref{proof limiting spectrum sub G equation 26} we arrive at \eqref{proof limiting spectrum sub G equation 11}.

\

Next, let us show (c).  Following \cite{CCV2}, for the given $L$ we introduce a one-parameter family of averaging operators $P^\epsilon: \Lb^2(\R^d)\to \Lb^2(\R^d)$ defined by
\begin{equation*}
\label{proof limiting spectrum sub G equation 27}
P^\epsilon \mathbf{g}(x):=
\begin{cases}
\frac1{|\e\hat\omega^k|}\int_{\e\hat\omega^k} \mathbf{g}(z)\,dz & \text{for }x\in \e\hat\omega^k, \, k\in \N,
\\
\mathbf{g}(x) & \text{otherwise}.
\end{cases}
\end{equation*}
What the operator $P^\epsilon$ does is that on every shifted inclusion $\e\hat\omega^k$  it replaces the function $\mathbf{g}$ with its average over said inclusion. Note that the set $\epsilon\hat\omega$ changes with $\epsilon$ in a nontrivial way, because for different $\epsilon$'s we get different shifts $y=y_{L,\epsilon}$; moreover, $\epsilon\hat\omega$ comprises disjoint open sets whose diameters tend to zero as $\epsilon$ goes to zero. A slavish adaptation of \cite[Lemma~E.1]{CCV2} to the case of vector-valued functions gives us
\begin{equation}
\label{proof limiting spectrum sub G equation 28}
\lim_{\epsilon\to 0}\|P^\epsilon \mathbf{g}-\mathbf{g}\|_{\Lb^2(\square_0^{3L})}=0
\end{equation}
for every $\mathbf{g}\in \Lb^2(\R^d)$.

With account of \eqref{proof limiting spectrum sub G equation 21}, equation \eqref{4.14} reads
\begin{equation*}
\label{proof limiting spectrum sub G equation 29}
-\epsilon^2 \dive \C_0 \nabla \bv^\epsilon_L-\lambda_\epsilon\,\bv^\epsilon_L=\lambda_\epsilon \tilde \bw_L^\epsilon +\mathbf{f}_L^\epsilon\,.
\end{equation*}
Let us decompose $\bv^\epsilon_L$ as $\bv^\epsilon_L=\hat \bv^\epsilon_L+\mathring \bv^\epsilon_L$, where $\hat \bv^\epsilon_L$ and $\mathring \bv^\epsilon_L$ are defined as solutions of
\begin{equation}
\label{proof limiting spectrum sub G equation 30}
-\epsilon^2 \dive \C_0 \nabla \hat \bv^\epsilon_L-\lambda_0\,\hat \bv^\epsilon_L=\lambda_0 \, P^\epsilon (\tilde \bw_L^\epsilon)\,
\end{equation}
and
\begin{equation}
\label{proof limiting spectrum sub G equation 31}
-\epsilon^2 \dive C_0 \nabla \mathring \bv^\epsilon_L-\lambda_\epsilon\mathring \bv^\epsilon_L=
(\lambda_\e-\lambda_0)  \hat \bv^\epsilon_L
+\lambda_\epsilon \tilde \bw^\epsilon_L-\lambda_0 P^\epsilon(\bw_L^0)+\mathbf{f}^\epsilon_L\,.
\end{equation}
Arguing as in \eqref{proof limiting spectrum sub G equation 7bis}--\eqref{proof limiting spectrum sub G equation 7ter}, one obtains from \eqref{proof limiting spectrum sub G equation 30} the estimate
\begin{equation}
\label{proof limiting spectrum sub G equation 32}
\|\hat \bv^\epsilon_L\|_{L^2(\square_0^{2L})}\le C;
\end{equation}
similarly,  with the help of \eqref{proof limiting spectrum sub G equation 32},
\eqref{proof limiting spectrum sub G equation 28}, 
\eqref{proof limiting spectrum sub G equation 21}, \eqref{proof limiting spectrum sub G equation 17}, 
the identity
\eqref{proof limiting spectrum sub G equation 31} implies
\begin{equation}
\label{proof limiting spectrum sub G equation 33}
\lim_{\epsilon\to 0}\|\mathring \bv^\epsilon_L\|_{\Lb^2(\square_0^{2L})}=0.
\end{equation}
Finally, we observe that 
\begin{equation}
\label{proof limiting spectrum sub G equation 34}
\hat \bv^\epsilon_L=\lambda_0\, \mathbf{b}_{\lambda_0}(x/\epsilon+y,\omega) P^\epsilon(\bw_L^0)
\end{equation}
solves \eqref{proof limiting spectrum sub G equation 30} (recall that the shift $y$ depends on $L$ and $\e$, see \eqref{proof limiting spectrum sub G equation 3} and the line above). 

Formulae \eqref{proof limiting spectrum sub G equation 32},  \eqref{proof limiting spectrum sub G equation 33}, \eqref{proof limiting spectrum sub G equation 34} and \eqref{proof limiting spectrum sub G equation 28} imply that the weak limit of $\bv^\epsilon_L=\hat \bv^\epsilon_L+\mathring \bv^\epsilon_L$ is \eqref{proof limiting spectrum sub G equation 10}, as claimed. This concludes the proof.
\end{proof}

\subsection{\textcolor{black}{Inner} bound on the limiting spectrum}
\label{Lower bound on the limiting spectrum}

In this subsection we will prove an \textcolor{black}{inner} bound for the limiting spectrum under the additional assumption of finite \textcolor{black}{range of dependence}. Combined with Theorem~\ref{theorem limiting spectrum sub G}, this will establish equality between $\mathcal{G}$ and the limiting spectrum.

In order to formulate the \textcolor{black}{assumption on the finite range of dependence} we first need to introduce some notation. Let a set $K\subset \R^d$ be compact. We denote by $\widetilde{\mathcal{F}}_{K}$  the $\sigma$-algebra (in $\R^d$) generated by all compact subsets of $K$ with respect to the Hausdorff distance and
 define a set-valued map $\mathcal{H}_{K}:\Omega\to \mathcal{P}(\R^d)$ by
\[
\mathcal{H}_{K}(\omega):=\omega \cap K.
\]
We also define  a $\sigma$-algebra  $\mathcal{F}_{K}$  (in $\Omega$) as
\[
\mathcal{F}_{K}:=\{ \mathcal{H}_{K}^{-1}(F_{K})\,|\, F_{K}\in \widetilde{\mathcal{F}}_{K}\}.
\]

\begin{assumption}[Finite range of dependence]
\label{assumption finite correlation}
There exists a positive real number $\kappa>0$ such that for every pair of compact sets $K_j\subset \R^d$, $j=1,2$, such that $\operatorname{dist}(K_1,K_2)>\kappa$ the $\sigma$-algebras $\mathcal{F}_{K_1}$ and $\mathcal{F}_{K_2}$ are independent, where \color{black}
\[
\operatorname{dist}(K_1,K_2):=\min\{|x_1-x_2|\,|\,x_1\in K_1, \ x_2\in K_2\}\,.
\]
\end{assumption}

\begin{theorem}
\label{theorem G sub limiting spectrum}
Under Assumption~\ref{assumption finite correlation} we have
\begin{equation*}
\label{theorem G sub limiting spectrum equation 1}
 \mathcal{G} \subseteq \lim_{\epsilon\to 0}\sigma(\A^\epsilon)\,,
\end{equation*}
which,  in conjunction with Theorem~\ref{theorem limiting spectrum sub G},  implies
\begin{equation*}
\label{theorem G sub limiting spectrum equation 2}
\lim_{\epsilon\to 0}\sigma(\A^\epsilon)= \mathcal{G},
\end{equation*}
\textcolor{black}{almost surely}.
\end{theorem}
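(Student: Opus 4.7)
The plan is to establish separately the two inclusions $\sigma(\A_0)\subseteq\lim_{\epsilon\to 0}\sigma(\A^\epsilon)$ and $\overline{\{\lambda:\beta_\infty(\lambda)\ge 0\}}\subseteq\lim_{\epsilon\to 0}\sigma(\A^\epsilon)$, which together with \eqref{G} and Theorem~\ref{theorem limiting spectrum sub G} give the full conclusion. The first inclusion is essentially elementary and does not require Assumption~\ref{assumption finite correlation}: by Theorem~\ref{thm:DspecUnionSpecIncl} it suffices to show that $\sigma(\A_0^{\omega^k})\subseteq\sigma(\A^\epsilon)$ for every $k$ almost surely, and given an eigenpair $(\mu,\bphi)$ of $\A_0^{\omega^k}$ the rescaled function $\bu^\epsilon(x):=\bphi(x/\epsilon)\mathbbm{1}_{\epsilon\omega^k}(x)$ lies in $\Hb^1_0(\epsilon\omega^k)\subset\mathcal{D}(\A^\epsilon)$ and, thanks to the factor $\epsilon^2$ in the definition \eqref{coefficients} of $\C^\epsilon$, satisfies $\A^\epsilon\bu^\epsilon=\mu\bu^\epsilon$ exactly. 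Hence $\mu\in\sigma(\A^\epsilon)$ for every $\epsilon$.

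For the second, substantive inclusion it is enough to fix $\lambda_0\notin\sigma(\A_0)$ with $\beta_\infty(\lambda_0)>0$; the boundary case $\beta_\infty(\lambda_0)=0$ then follows from the closedness of $\lim_{\epsilon\to0}\sigma(\A^\epsilon)$ combined with the continuity and strict monotonicity of $\beta_\infty$ on connected components of $\R^+\setminus\sigma(\A_0)$ given by Proposition~\ref{prop properties of beta inf}. The strategy is to construct a Weyl singular sequence for $\A^\epsilon$ at $\lambda_0$ by transplanting the two-scale singular-sequence construction from the proof of Theorem~\ref{propositon spectrum Ahom} onto a large \emph{atypical} cube. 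Concretely, I first combine the definition \eqref{eq definition beta infinity} of $\beta_\infty$ with Assumption~\ref{assumption finite correlation} to produce, almost surely, sequences $L_n\to\infty$, $z_n\in\R^d$ and $\hat k_n\in\mathbb{S}^{d-1}$ such that the local-average matrix
\begin{equation*}
\bbeta_n:=\frac{1}{|\square_{z_n}^{L_n}|}\int_{\square_{z_n}^{L_n}}\bigl(\lambda_0 I+\lambda_0^2\,\mathbf{b}_{\lambda_0}(y,\omega)\bigr)\,dy
\end{equation*}
satisfies $\hat k_n\cdot\bbeta_n\hat k_n\to\beta_\infty(\lambda_0)>0$. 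The finite range of dependence is crucial at this step: it grants quasi-independence of $\mathbf{b}_{\lambda_0}(\cdot,\omega)$ over far-apart cubes and thereby enables a Borel--Cantelli-type argument (analogous to the scalar one in \cite{CCV2}) that upgrades the liminf of suprema in \eqref{eq definition beta infinity} to a deterministic sequence along a single realisation. Next I select $\epsilon_n\to 0$ with $\epsilon_n L_n\to 0$ slowly enough to keep $L_n\to\infty$, pick $r_n>0$ and $c_n\in\R^d\setminus\{0\}$ so that $(r_n^2\,\hat k_n\cdot\C^\hom\cdot\hat k_n-\bbeta_n)c_n=0$ (possible since $\bbeta_n$ has eventually a positive eigenvalue), and define
\begin{equation*}
\bu^{\epsilon_n}(x):=\frac{1}{\mathcal{N}_n}\,\eta_{\epsilon_n L_n}(x-\epsilon_n z_n)\,e^{\ir r_n\hat k_n\cdot(x-\epsilon_n z_n)/\epsilon_n}\bigl[c_n+\lambda_0\,\mathbf{b}_{\lambda_0}(x/\epsilon_n,\omega)\,c_n\bigr],
\end{equation*}
with normalisation $\mathcal{N}_n$ so that $\|\bu^{\epsilon_n}\|_{\Lb^2}=1$. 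Inserting $\bu^{\epsilon_n}$ into $\A^{\epsilon_n}-\lambda_0$, the inclusion-side bulk terms cancel by virtue of \eqref{3.18}, while the matrix-side bulk terms cancel against the dispersion relation $r_n^2\,\hat k_n\cdot\C^\hom\cdot\hat k_n\,c_n=\bbeta_n c_n$ via the defining formula \eqref{definition Chom} for $\C^\hom$, exactly as in the periodic calculation of Theorem~\ref{propositon spectrum Ahom}; the residue consists of (i) a cut-off error of order $L_n^{-1}$, (ii) a fluctuation error encoding the discrepancy between pointwise values of $\mathbf{b}_{\lambda_0}$ and the local averages defining $\bbeta_n$, small by the construction of atypical cubes together with ergodicity, and (iii) cross-terms involving $\nabla\mathbf{b}_{\lambda_0}$ produced by differentiating the corrector against the rapidly oscillating exponential.

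The principal obstacle, which dictates the whole strategy, is the first step: producing atypical cubes along a single realisation. Since $\beta_\infty$ is a liminf of suprema over all translates, extracting a useful deterministic sequence for a given $\omega$ seems out of reach under a bare ergodicity hypothesis, and this is exactly why Assumption~\ref{assumption finite correlation} is required for the lower bound but not for Theorem~\ref{theorem limiting spectrum sub G}. A secondary technical obstacle is controlling the cross-term (iii), for which one needs $\Lb^p_{\loc}$-bounds on $\nabla\mathbf{b}_{\lambda_0}$ with some $p>2$; this is supplied by Theorem~\ref{theorem higher regularity corrector} and is precisely where Assumption~\ref{main assumption 2} on uniform Lipschitz-equivalence of the inclusions to finitely many shapes enters. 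With both ingredients in place, the residue estimate becomes routine and yields $\|(\A^{\epsilon_n}-\lambda_0)\bu^{\epsilon_n}\|_{\Lb^2}\to 0$, thus placing $\lambda_0$ in $\lim_{\epsilon\to 0}\sigma(\A^\epsilon)$ and completing the proof.
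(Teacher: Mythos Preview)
Your proposal misses the central construction and, as written, cannot be carried through. The key difficulty you identify --- producing atypical cubes along a single realisation --- is not addressed in the paper by a Borel--Cantelli argument yielding \emph{one} large atypical cube. Rather, Assumption~\ref{assumption finite correlation} is invoked via Proposition~\ref{proposition cubes} to produce, for each $\epsilon$, a family of $N(\epsilon)^d$ cubes $\square_{x_j}^{R+\kappa}$ that are \emph{nearly identical} to each other (condition~\eqref{proposition cubes equaiton 3}) and whose union is a single large cube of macroscopic size $L$ after $\epsilon$-scaling. This manufactures an approximately \emph{periodic} medium out of the random one, and the paper then solves an auxiliary periodic cell problem on the reference cube $\tilde\square_{x_1}^{R+\kappa}$, obtaining a periodic corrector $\hat\Nb^{(j,k)}$ and a periodic homogenised tensor $\hat\C^{\hom}$ (not the stochastic $\C^{\hom}$). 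The quasimode is then built from a \emph{macroscopic} plane wave $A\cos(\varpi k_0\cdot x)$ satisfying $-\dive\hat\C^{\hom}\nabla\bu=\boldsymbol\ell\,\bu$, corrected by $\epsilon\,\partial_k[\bu_{L,\epsilon}]_j\,\hat\Nb^{(j,k)}(x/\epsilon)$.

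Your ansatz omits this corrector term entirely, and your claimed ``matrix-side bulk cancellation via $\C^{\hom}$'' cannot occur: on an atypical cube the local distribution of inclusions does \emph{not} agree with the ensemble average defining $\C^{\hom}$, and in any case no cancellation of the oscillating stiff coefficients is possible without a corrector in the ansatz. Moreover, as written your phase $e^{\ir r_n\hat k_n\cdot x/\epsilon_n}$ oscillates at the microscale; differentiating twice and multiplying by $\C_1$ in the matrix produces terms of order $\epsilon_n^{-2}$ that nothing cancels. Finally, you misattribute Theorem~\ref{theorem higher regularity corrector}: it gives $\Lb^p$ bounds on $\nabla\hat\Nb^{(j,k)}$ (the periodic corrector), not on $\nabla\bb_{\lambda_0}$, and it is used to control the discrepancy between the true and periodic inclusion indicators in~\eqref{proof part2 equation 41}, not the cross-terms you describe.
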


The proof of Theorem~\ref{theorem G sub limiting spectrum} relies on the observation that given a $\lambda \not \in \sigma(\A_0)$,  one can find an arbitrarily large number of cubes of the same size such that
\begin{enumerate}[(i)]
\item \textcolor{black}{(the interiors of)} these cubes are disjoint and their union is a (larger) cube,
\item the configuration of the set of inclusions in each of these cubes, apart from the boundary layers of thickness $\kappa$ (the range of \textcolor{black}{dependence}), is almost the same, and
\item the quantity $\beta_\infty(\lambda)$ is well approximated by the integral on the RHS of \eqref{eq definition beta infinity} computed for each of these cubes.
\end{enumerate}
\textcolor{black}{This guarantees the almost sure existence of large nearly-periodic areas where one can construct quasi-modes using the simpler ``periodic toolkit'', committing only a small controllable error due to the near-periodicity.}

The \textcolor{black}{existence of nearly-periodic cubes} is formalised by the following proposition, whose proof was given in \cite{CCV2}. \color{black}
Henceforth, we will denote by $\operatorname{dist}_H$ the Hausdorff distance.
\color{black} 

\begin{proposition}[{\cite[Thms.~5.18 and~5.19]{CCV2}}]
\label{proposition cubes}
Under Assumption~\ref{assumption finite correlation}, there exists a set of full measure $\Omega_1\subset \Omega$ such that for every $\omega \in \Omega_1$ the following holds.  Suppose we are given $R_0>0$, $\delta>0$ and  $\lambda \not \in \sigma(\A_0)$. Then there exists $R\geq R_0$, and for any given $N\in \N$ there exists a collection of points $\{x_n\}_{n=1}^{N^d}\subset \R^d$ and a point $\overline{x}\in \R^d$ such that
\begin{equation}
\label{proposition cubes equaiton 1}
\operatorname{Int}\square_{x_j}^{R+\kappa} \cap \operatorname{Int} \square_{x_k}^{R+\kappa}=\emptyset \qquad \forall j\ne k, \ j,k\in \{1,\ldots, N^d\},
\end{equation}
\begin{equation}
\label{proposition cubes equaiton 2}
\bigcup_{n=1}^{N^d}\square_{x_n}^{R+\kappa} =\square_{\overline{x}}^{N(R+\kappa)} ,
\end{equation}
\begin{equation}
\label{proposition cubes equaiton 3}
\operatorname{dist}_{\textcolor{black}{H}}\left(\mathcal{H}_{\square_{0}^R}(\omega-x_j),\mathcal{H}_{\square_{0}^R}(\omega-x_k)\right)<\delta\qquad \forall j,k\in\{1,\ldots, N^d\},
\end{equation}
\begin{equation}
\label{proposition cubes equaiton 4}
\left|\beta_\infty(\lambda)-\sup_{ k \in \mathbb{S}^{d-1}} \frac{1}{|\square_{x_n}^{R+\kappa}|}\int_{\square_{x_n}^{R+\kappa}} \left(\lambda+\lambda^2\,  k \cdot \mathbf{b}_\lambda(y,\omega)  k  \right)  dy\right|<\delta \qquad \forall n\in \{1,\dots,N^d\},
\end{equation}
\begin{multline}
\label{proposition cubes equaiton 5}
\left|\frac{1}{|\square_{x_n}^{R+\kappa}|}\int_{\square_{x_n}^{R+\kappa}} \left(\lambda\, \mathrm{I}+\lambda^2\, \mathbf{b}_\lambda(y,\omega) \right)  dy
-
\frac{1}{|\square_{x_m}^{R+\kappa}|}
\int_{\square_{x_m}^{R+\kappa}} \left(\lambda\, \mathrm{I}+\lambda^2\, \mathbf{b}_\lambda(y,\omega) \right)  dy\right|<\delta \\\qquad \forall n,m\in \{1,\dots,N^d\}\,.
\end{multline}
\end{proposition}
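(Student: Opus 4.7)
My plan is to combine the definition of $\beta_\infty(\lambda)$ as a deterministic limit with a classical ergodic-theoretic argument for finite-range-dependent events. I first pin down a single \emph{target cube} of some side $L := R+\kappa$ on which the integral in \eqref{eq definition beta infinity} already approximates $\beta_\infty(\lambda)$; I then argue that the local inclusion configuration realising this approximation belongs to an event of positive probability, so that $N^d$ independent copies can be tiled inside a larger cube via the multidimensional ergodic theorem.

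\textbf{Step 1 (good target and discretisation).} By Proposition~\ref{prop properties of beta inf}(i) the liminf in \eqref{eq definition beta infinity} is a genuine limit, so for the given $\delta$ I can pick $L\geq R_0+\kappa$ (hence a valid $R\geq R_0$) and $x^{\star}\in \R^d$ with
\[
\sup_{k\in \mathbb{S}^{d-1}}\frac{1}{L^d}\int_{\square_{x^{\star}}^L}\bigl(\lambda+\lambda^2 k\cdot \mathbf{b}_\lambda(y,\omega)k\bigr)\,dy \;>\; \beta_\infty(\lambda)-\tfrac{\delta}{2}.
\]
I record the full matrix profile $M^{\star} := L^{-d}\int_{\square_{x^{\star}}^L}(\lambda I+\lambda^2 \mathbf{b}_\lambda)\,dy$. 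I next cover the (Hausdorff-compact) space of admissible configurations in $\square_0^{L-\kappa}$ and the space of symmetric $d\times d$ matrices by finitely many $\delta$-balls, and let $E\subset \Omega$ be the cell of the resulting finite measurable partition of $\Omega$ that contains $T_{x^{\star}}\omega$, the cell being described through the restriction $\omega\cap \square_0^{L-\kappa}$ together with the integrated matrix $M(T_0\omega)$. As a non-empty cell of a finite partition, $P(E)>0$.

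\textbf{Step 2 (tiling via ergodicity and independence).} For a prescribed $N\in \mathbb{N}$, consider, for each $\bar x\in \R^d$, the lattice of adjacent cubes $\{\square_{\bar x+Ln}^L\}_{n\in \{0,\dots,N-1\}^d}$, which by construction satisfies \eqref{proposition cubes equaiton 1}--\eqref{proposition cubes equaiton 2} and tiles a macro-cube of side $NL$. Define the super-event $B:=\bigcap_n T_{Ln}^{-1}E$. Because $E$ depends only on the restriction to an inner cube of side $L-\kappa$, the translates $T_{Ln}^{-1}E$ are controlled by pairwise disjoint compacts separated by distance $\geq \kappa$; Assumption~\ref{assumption finite correlation} then yields mutual independence, so $P(B)=P(E)^{N^d}>0$. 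The ergodic Theorem~\ref{ergodic theorem} applied to $\mathbbm{1}_B$ delivers, almost surely, a shift $\bar x$ with $T_{\bar x}\omega\in B$. Setting $x_n := \bar x+Ln$, properties \eqref{proposition cubes equaiton 3}--\eqref{proposition cubes equaiton 5} follow respectively from the Hausdorff-proximity clause in $E$, the matrix-proximity clause in $E$ combined with the defining inequality for $M^{\star}$, and the triangle inequality applied between any two cubes.

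\textbf{Main obstacle.} The delicate part is Step 1. For $E$ to sit inside a finite-range $\sigma$-algebra I need $M$ to be measurable with respect to $\mathcal F_{\square_0^{L+c}}$ for some $c=O(1)$; for the Hausdorff-ball approximation on configurations to translate into a genuine matrix approximation I need $M$ to depend continuously on the configuration. Localisation is inexpensive: Assumption~\ref{main assumption} forces inclusion diameter $<1/2$, and Lemma~\ref{lemma b in physical space} then confines the dependence of $\mathbf{b}_\lambda(y,\omega)$ for $y\in \square_0^L$ to $\omega\cap \square_0^{L+1}$. Continuity is more demanding: it requires uniform resolvent stability of $\A_0^{\omega^k}$ under $C^2$-small perturbations of each inclusion, which is precisely where the uniform Korn constant from Proposition~\ref{proposition uniform C} (itself underwritten by the stronger Assumption~\ref{main assumption 2}) must enter the argument.
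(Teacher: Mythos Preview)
The paper does not prove this proposition in the body: it is stated with a citation to \cite[Thms.~5.18 and~5.19]{CCV2} and the surrounding text says explicitly that the proof ``was given in \cite{CCV2}''. So there is no in-paper argument to compare against line by line. That said, your sketch has the correct architecture (positive-probability target event, independence from finite range, ergodic hitting), but contains a real gap in Step~1.

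\medskip

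\textbf{The gap.} You write: ``As a non-empty cell of a finite partition, $P(E)>0$.'' This is false in general: a finite measurable partition of a probability space may well have null cells, and nothing prevents the particular cell containing $T_{x^\star}\omega$ from being one of them. Concretely, the supremum over $x$ in the definition \eqref{eq definition beta infinity} could be approached along shifts whose local configurations form a $P$-null set, and your choice of $x^\star$ gives no protection against this. The standard repair is to \emph{first} show that for a.e.\ $\omega$,
\[
\sup_{x\in\R^d} f_L(T_x\omega)\;=\;\operatorname*{ess\,sup}_{\omega'\in\Omega} f_L(\omega'),
\qquad f_L(\omega'):=\sup_{k\in\mathbb S^{d-1}}\frac{1}{L^d}\int_{\square_0^L}\bigl(\lambda+\lambda^2\,k\cdot \mathbf{b}_\lambda(y,\omega')k\bigr)\,dy,
\]
which follows because $x\mapsto f_L(T_x\omega)$ is continuous (it is an average of an $L^2_{\rm loc}$ function over a translating cube) and, by Fubini, is $\leq \operatorname*{ess\,sup} f_L$ for a.e.\ $x$, hence for all $x$. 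This identifies $\beta_\infty(\lambda)$ with $\lim_L \operatorname*{ess\,sup} f_L$ and yields directly that $\{f_L>\beta_\infty(\lambda)-\delta/2\}$ has positive probability; only then do you intersect with a Hausdorff/matrix cell to manufacture $E$.

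\medskip

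\textbf{A second, related issue.} You place both the Hausdorff datum on $\square_0^{L-\kappa}$ \emph{and} the matrix $M$ into the description of $E$, but by your own localisation remark $M$ is only $\mathcal F_{\square_0^{L+c}}$-measurable. Then $E\notin \mathcal F_{\square_0^{L-\kappa}}$, the translates $T_{Ln}^{-1}E$ are not determined by $\kappa$-separated compacts, and the independence step $P(B)=P(E)^{N^d}$ is unjustified. The clean way out --- and the reason the statement only asks for Hausdorff closeness on the \emph{inner} cubes $\square_{x_n}^R$ --- is to let $E$ encode only the configuration in $\square_0^R$, and to \emph{derive} \eqref{proposition cubes equaiton 4}--\eqref{proposition cubes equaiton 5} a posteriori from \eqref{proposition cubes equaiton 3}: continuity of $\omega\mapsto \mathbf b_\lambda$ (the point you correctly flag as the main obstacle) controls the integrals over $\square_{x_n}^R$, and the boundary-layer discrepancy between $\square_{x_n}^R$ and $\square_{x_n}^{R+\kappa}$ is $O(\kappa/R)$ by Lemma~\ref{lemma properties b lambda}, absorbed by taking $R$ large. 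With these two fixes your outline matches the strategy of \cite{CCV2}.
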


\begin{remark}
Note that the effect of \textcolor{black}{the range of dependence} manifests itself explicitly in the bound \eqref{proposition cubes equaiton 3}, which is only true when one removes a boundary layer of width $\kappa$ from our cubes $\square_{x_n}^{R+\kappa}$. Indeed, distributions of inclusions in $\square_{x_j}^{R}$ and $\square_{x_k}^{R}$  are independent for all $j\ne k$, $j,k\in \{1,\ldots, N^d\}$ by Assumption~\ref{assumption finite correlation}.
\end{remark}

\begin{proof}[Proof of Theorem~\ref{theorem G sub limiting spectrum}]
Suppose we are given $\lambda_0 \in \mathcal G$.   The task at hand is to show that 
\begin{equation}
\label{proof G sub limiting spectrum equation 1}
\lim_{\epsilon\to 0} \operatorname{dist}(\lambda_0,\sigma(\A^\epsilon))=0.
\end{equation}
Due to Corollary \ref{c 4.12} and the continuity of $\beta_\infty$, see Proposition \ref{prop properties of beta inf}, it is sufficient to prove this for $\l_0$ such that $\beta_\infty(\lambda_0)>0$.
The proof, rather long and technical, will be broken into several steps for the sake of clarity.

\

\textbf{Step 1}: \emph{Approximating $\beta_\infty(\lambda_0)$ on \textcolor{black}{nearly}-periodic cubes}.\\

Let us fix sufficiently small $\delta>0$ and let $R_0, L>0$ be sufficiently large positive numbers and suppose we have fixed $\epsilon>0$. Then by Proposition \ref{proposition cubes} there exists $R\geq R_0$ such that the following holds: there exist $\overline x\in \R^d$ and $\{x_k\}_{k=1}^{N^d}\subset \R^d$ for which \eqref{proposition cubes equaiton 1}--\eqref{proposition cubes equaiton 4} are satisfied, where $N=N(\epsilon)$ is chosen to be the smallest positive integer such that
\begin{equation}
\label{proof part2 equation 12}
\epsilon N(R+\kappa)\ge L.
\end{equation}
Note that $N(\epsilon)=O(\epsilon^{-1})$ as $\epsilon\to 0$, and the choice of the centres of the cubes $\overline x$, $\{x_k\}_{k=1}^{N^d}$ depends on $\e$, which we, however, do not reflect in the notation for the sake of brevity.
The above condition \eqref{proof part2 equation 12} ensures that
\begin{equation}
\label{proof part2 equation 13}
\square_{\epsilon \overline{x}}^{L}\subseteq \square_{\epsilon \overline{x}}^{\epsilon N(R+\kappa)} = \bigcup_k \square_{\epsilon {x_k}}^{\epsilon(R+\kappa)}.
\end{equation}

We pick one of the cubes as the approximation basis for the construction that follows. Since it is not important which cube we use, we pick $\square_{x_1}^{R+\kappa}$. Let 
\begin{equation*}
\label{proof part2 equation 3}
\boldsymbol{\ell}:=\frac{1}{|\square_{x_1}^{R+\kappa}|}\int_{\square_{x_1}^{R+\kappa}} (\lambda_0 \, \mathrm{I}+\lambda_0^2\,  \mathbf{b}_{\lambda_0}(y,\omega))\, dy\,.
\end{equation*}
Then \eqref{proposition cubes equaiton 4} can be recast as
\begin{equation*}
\label{proof part2 equation 4}
|\beta_\infty(\lambda_0)-\lambda_\mathrm{max}(\boldsymbol{\ell})|< \delta,
\end{equation*}
where $\lambda_\mathrm{max}(\boldsymbol{\ell})$ denotes the greatest eigenvalue of the $d\times d$ matrix $\boldsymbol{\ell}$. Without loss of generality, we can assume that $\lambda_\mathrm{max}(\boldsymbol{\ell})>0$. This can always be achieved by choosing $\delta$ small enough.

\

\textbf{Step 2}: \emph{Auxiliary periodic problem}.\\
Let 
\begin{equation}
\label{proof part2 equation 5}
\tilde\square_{x_1}^{R+\kappa}:=\square_{x_1}^{R+\kappa} \setminus \left( \bigcup_{\o^k \subset\subset \square_{x_1}^{R}} \o^k \right)
\end{equation}
be the set obtained from $\square_{x_1}^{R+\kappa}$ by removing the inclusions fully contained within the smaller cube $\square_{x_1}^{R}$.

For $j,k\in\{1,\ldots,d\}$, let $\hat \Nb^{(j,k)}\in \Hb^1_{\mathrm{per}}(\tilde\square_{x_1}^{R+\kappa})$ be the unique zero-mean solution to
\begin{equation}
\label{proof part2 equation 6}
\int_{\tilde\square_{x_1}^{R+\kappa}} (\C_1)_{\alpha\beta\mu\nu}\left(\delta_{\mu j}\delta_{\nu k}+\nabla_\mu \hat N_\nu^{(j,k)} \right)\,\nabla_\alpha \varphi_\beta =0, \qquad \forall \bvarphi\in \Hb^1_{\mathrm{per}}(\tilde\square_{x_1}^{R+\kappa})\,.
\end{equation}
The quantity $\hat \Nb$ is a tensor of order $d^2$ valued in vector functions over $\R^d$, and it has the meaning of \emph{homogenisation corrector} for the auxiliary periodic problem obtained by covering $\R^d$ with copies of the perforated cubical domain $\tilde\square_{x_1}^{R+\kappa}$.  
Let us first extend $\hat \Nb^{(j,k)}$ to the whole of $\square_{x_1}^{R+\kappa}$ by Theorem~\ref{theorem extension}, and then to the whole of $\R^d$ by periodicity. With slight abuse of notation, we denote the resulting object $\hat \Nb^{(j,k)}\in \Hb^1_{\mathrm{per}}(\square_{x_1}^{R+\kappa})$ by the same symbol.

We define the periodic homogenised tensor $\hat \C^\mathrm{hom}$ via
\begin{equation}
\label{proof part2 equation 7}
\hat{\C}^\mathrm{hom}\xi :=\frac{1}{|\square_{x_1}^{R+\kappa}|}\int_{\tilde \square_{x_1}^{R+\kappa}} \C_1(\xi+\xi_{jk}\nabla \hat \Nb^{(j,k)}) \qquad \forall \xi\in \R^{d^2},  \ \xi=\sym \xi\,.
\end{equation}

\

\textbf{Step 3}: \emph{Estimates for the periodic corrector}.\\
Formula \eqref{proof part2 equation 6} immediately gives us
\begin{equation*}
\label{proof part2 equation 8}
\int_{\square_{x_1}^{R+\kappa}} \C_1\,\nabla \hat \Nb^{(j,k)}\cdot \nabla \hat \Nb^{(j,k)}=-\int_{\square_{x_1}^{R+\kappa}} (\C_1)_{\alpha\beta jk}\,\nabla_\alpha \hat N_\beta^{(j,k)}, \qquad j,k\in\{1,\ldots,d\},
\end{equation*}
(in the above formula there is no summation over $j$ and $k$). The latter and \eqref{ellipticity} imply
\begin{equation}
\label{proof part2 equation 9}
\|\sym \nabla \hat \Nb^{(j,k)}\|_{[\Lb^2(\square_{x_1}^{R+\kappa})]^d}\le C (R+\kappa)^{\frac{d}{2}}.
\end{equation}
The Korn and Poincar\'e inequalities then give us
\begin{equation}
\label{proof part2 equation 10}
\| \hat \Nb^{(j,k)}\|_{\Lb^2(\square_{x_1}^{R+\kappa})}\le C (R+\kappa)^{\frac{d}{2}+1}.
\end{equation}

The above estimates are rather straightforward. However,  further on in the proof, in order to close the estimates, we will need higher (than $\Hb^1$) regularity of the periodic corrector.  The key technical result in this regard is stated and proved in Appendix~\ref{Appendix higher regularity} in the form of Theorem~\ref{theorem higher regularity corrector}, which states that there exists a constant $C>0$ and $p>2$, both independent of $R$,  such that 
\begin{equation}
\label{proof part2 equation 11}
\|\nabla \hat \Nb^{(j,k)}\|_{[\Lb^p(\square_{x_1}^{R+\kappa})]^d}\le C (R+\kappa)^{\frac{d}{p}}.
\end{equation}

\

\textbf{Step 4}: \emph{Construction of ``quasimodes''}.\\
Let $ k_0\in \mathbb{S}^{d-1}$ be given and let us define a $d\times d$ matrix $\tilde \C=\tilde \C( k_0)$ component-wise as
\begin{equation}
\label{proof part2 equation 14}
\tilde \C_{\alpha\beta}:=( k_0)_\mu \,\hat \C^\mathrm{hom}_{\alpha\mu\nu\beta}\, ( k_0)_\nu\, , \qquad \alpha,\beta\in\{1,\dots,d\}\,.
\end{equation}
Observe that $\tilde \C$ is symmetric and positive definite.  In the wave equation context the direct analogue of $k_0$ has the meaning of the direction of propagation of a flat wave, cf. \eqref{proof part2 equation 17} below. Its particular choice is unimportant, and it won't affect the final conclusion.

Let us choose $\varpi=\varpi( k_0)\in \R$ and $A=A( k_0)\in \mathbb{S}^{d-1}$ in accordance with
\begin{equation*}
\label{proof part2 equation 15}
\varpi^2=\lambda_\mathrm{max}(\tilde{\C}^{-1/2}\,\boldsymbol{\ell}\,\tilde \C^{-1/2}), 
\end{equation*}
\begin{equation*}
\label{proof part2 equation 16}
A\in \ker(\varpi^2 \tilde \C-\boldsymbol{\ell}),
\end{equation*}
and let us put
\begin{equation}
\label{proof part2 equation 17}
\bu(x):=A\cos(\varpi \,  k_0\cdot x).
\end{equation}
A direct calculation shows that formulae \eqref{proof part2 equation 14}--\eqref{proof part2 equation 17} imply
\begin{equation}
\label{proof part2 equation 18}
-\dive \hat \C^\mathrm{hom} \nabla \bu=\boldsymbol{\ell}\,\bu\,.
\end{equation}

Let $\eta_{L,\epsilon}(x):=\eta(x/L+\epsilon \overline{x})$, where the cut-off $\eta$ is defined as in \eqref{def eta}. Then the normalised vector-functions
\begin{equation*}
\label{proof part2 equation 19}
\bu_{L,\epsilon}:= \frac{\eta_{L,\epsilon}\bu}{\|\eta_{L,\epsilon} \bu\|_{\Lb^2(\R^d)}}
\end{equation*}
are approximate solutions of \eqref{proof part2 equation 18}, that is, they satisfy
\begin{equation}
\label{proof part2 equation 20}
\|\dive \hat \C^\mathrm{hom} \nabla \bu_{L,\epsilon}+\boldsymbol{\ell}\,\bu_{L,\epsilon}\|_{\Lb^2(\R^d)}\le \frac{C}{L}.
\end{equation}
Note that the constant $C$ in the above estimate is independent of $\epsilon$. The vector-functions $\bu_{L,\epsilon}$ can be thought of as ``quasimodes'' as $L\to\infty$, where quotation marks are needed because the ``spectral parameter'' here is a matrix. Also note that (cf. \eqref{proof part2 equation 13}) 
\begin{equation}
\label{proof part2 equation 21}
\operatorname{supp}\bu_{L,\epsilon}\subseteq\square_{\epsilon \overline{x}}^{L}\subseteq \square_{\epsilon \overline{x}}^{\epsilon N(R+\kappa)}\,.
\end{equation}

Out of $\bu_{L,\epsilon}$ one can construct candidates for ``quasimodes'' for the operator $\mathcal{A}^\epsilon$ as follows. Let $\mathbf{b}_{\lambda_0}^\epsilon(x):=\mathbf{b}_{\lambda_0}(x/\epsilon,\omega)$ (recall that $\mathbf{b}_{\lambda_0}$ is defined by \eqref{definition of b}) and define
\begin{equation}
\label{proof part2 equation 22}
\bu_L^{\epsilon}:=(1+\lambda_0 \bb^\epsilon_{\lambda_0})\bu_{L,\epsilon}.
\end{equation}
While the functions $\bu_L^{\epsilon}$ are in the domain of the form \eqref{form of the operator}, they are not in the domain of $\A^\epsilon$. Therefore, we define
\begin{equation*}
\label{proof part2 equation 23}
\hat \bu^\epsilon_L:=(\lambda_0+1)(\A^\epsilon+1)^{-1}\bu_L^{\epsilon}
\end{equation*}
so that
\begin{equation}
\label{proof part2 equation 24}
(\A^\epsilon-\lambda_0)\hat \bu^\epsilon_L=(\lambda_0+1)(\bu^\epsilon_L-\hat \bu^\epsilon_L).
\end{equation}

The task at hand reduces to show that for every $\sigma>0$ one can choose $\epsilon_0>0$, $\delta>0$ and $L>0$ such that
\begin{equation}
\label{proof part2 equation 25}
\|\bu^\epsilon_L-\hat \bu^\epsilon_L\|_{\Lb^2(\R^d)}<\sigma \qquad \forall \epsilon<\epsilon_0.
\end{equation}
The latter, combined with \eqref{proof part2 equation 24} and the fact that, clearly,
\begin{equation*}
\label{proof part2 equation 26}
\|\bu^\epsilon_L\|_{\Lb^2(\R^d)}\ge C>0
\end{equation*}
uniformly in $\epsilon$ and $L$, gives us \eqref{proof G sub limiting spectrum equation 1}.

\

\textbf{Step 6}: \emph{Adding the corrector}.\\
Rather than estimating $\|\bu^\epsilon_L-\hat \bu^\epsilon_L\|_{\Lb^2(\R^d)}$ directly, it is convenient to estimate
$\|\bu^\epsilon_{LC}-\hat \bu^\epsilon_L\|_{\Lb^2(\R^d)}$
instead, where
\begin{equation}
\label{proof part2 equation 28}
\bu^\epsilon_{LC}(x):=\bu_L^\epsilon(x)+\epsilon \partial_k [\bu_{L,\epsilon}]_j(x) \hat\Nb^{(j,k)}(x/\epsilon)
\end{equation}
is a modification of $\bu_L^\epsilon$ by adding the homogenisation corrector.  (Recall that $\hat\Nb^{(j,k)}$ is not the actual stochastic corrector for the operator $\A^\e$, but the auxiliary periodic corrector introduced in Step 2.) 
Doing so will yield the result we are after because, on account of the estimate \eqref{proof part2 equation 10} for the corrector, we have
\begin{equation}
\label{proof part2 equation 29}
\left|
\|\bu^\epsilon_L-\hat \bu^\epsilon_L\|_{\Lb^2(\R^d)}
-
\|\bu^\epsilon_{LC}-\hat \bu^\epsilon_L\|_{\Lb^2(\R^d)}
\right|
\le
\|\bu^\epsilon_L-\bu^\epsilon_{LC}\|_{\Lb^2(\R^d)}\le C\epsilon (R+\kappa).
\end{equation}

\

\textbf{Step 7}: \emph{Key bilinear form estimate}.\\
Let us denote by $a^\epsilon$ the bilinear form associated with the operator $\A^\epsilon+\mathrm{Id}$:
\begin{equation*}
\label{proof part2 equation 30}
a^\epsilon(\bu,\bv):=\int_{\R^d} \left(\C^\epsilon \nabla \bu \,\cdot \nabla \bv +\bu\cdot \bv\right)\,.
\end{equation*}
Following a strategy  proposed in \cite{KamSm1} (which exploits, in turn, a general idea found, e.g., in \cite{vishik}), we claim that the task at hand reduces to proving the key estimate
\begin{equation}
\label{proof part2 equation 31}
|a^\epsilon(\bu^\epsilon_{LC}-\hat \bu^\epsilon_L,\bv)|\le C \,\mathcal{E}(\epsilon,\delta, L) \sqrt{a^\epsilon(\bv,\bv)} \qquad \forall \bv \in \Hb^1(\R^d)
\end{equation}
for some `error' $\mathcal{E}(\epsilon,\delta, L)$ satisfying
\begin{equation}
\label{proof part2 equation 32}
\lim_{\delta \to 0} \lim_{L\to+\infty}\lim_{\epsilon\to 0}\mathcal{E}(\epsilon,\delta, L)=0.
\end{equation}

Suppose \eqref{proof part2 equation 31} holds. Then for $\bv=\bu^\epsilon_{LC}-\hat \bu^\epsilon_L$ one obtains 
\begin{equation}
\label{proof part2 equation 33}
\|\bu^\epsilon_{LC}-\hat \bu^\epsilon_L\|_{\Lb^2(\R^d)} \le\mathcal{E}(\epsilon,\delta, L).
\end{equation}
Formulae \eqref{proof part2 equation 33}, \eqref{proof part2 equation 32} and \eqref{proof part2 equation 29} imply \eqref{proof part2 equation 25}. The remainder of the proof is devoted to proving \eqref{proof part2 equation 31}.

\

Given an arbitrary $\bv\in \Hb^1(\R^d)$, let $\tilde \bv^\epsilon$ be the extension of $\bv|_{\M^\epsilon}$ into $\I^\epsilon$ via Theorem~\ref{theorem extension} and put $\bv_0^\epsilon=\bv-\tilde \bv^\epsilon$.

By the properties of the extension, we have:
\begin{equation}
\label{estimates v extension equation 1}
\|\bv_0^\epsilon\|_{\Lb^2(\R^d)}\le C \epsilon \|\sym \nabla \bv\|_{\Lb^2(\R^d)}\,,
\end{equation}
\begin{equation}
\label{estimates v extension equation 2}
\|\sym \nabla \bv_0^\epsilon\|_{\Lb^2(\R^d)} \le C\,\|\sym \nabla \tilde \bv^\epsilon\|_{\Lb^2(\R^d)}
\end{equation}
and
\begin{equation}
\label{estimates v extension equation 3}
\|\tilde \bv^\epsilon\|_{\Lb^2(\R^d)}+\|\sym \nabla\tilde \bv^\epsilon\|_{\Lb^2(\R^d)}+\epsilon\|\sym \nabla \bv\|_{\Lb^2(\R^d)}\le C \sqrt{a^\epsilon(\bv,\bv)}\,.
\end{equation}

Let us consider the quantity
\begin{equation}
\label{proof part2 equation 34}
a^\epsilon(\bu^\epsilon_{LC},\bv)=
\int_{\M^\epsilon} \C_1 \nabla \bu^\epsilon_{LC}\cdot \nabla \bv 
+
\epsilon^2\int_{\I^\epsilon} \C_0 \nabla \bu^\epsilon_{LC}\cdot \nabla \bv
+
\int_{\R^d} \bu^\epsilon_{LC}\cdot \bv.
\end{equation}

\

\textbf{Step 8}: \emph{Estimates: part 1}.\\
By adding and subtracting $\hat \C^\mathrm{hom}\nabla \bu_{L,\e} \cdot \nabla \tilde \bv$, the first term in the RHS of \eqref{proof part2 equation 34} can be equivalently recast as
\begin{multline}
\label{proof part2 equation 35}
\int_{\M^\epsilon} \C_1 \nabla \bu^\epsilon_{LC}\cdot \nabla \bv
=
\int_{\M^\epsilon} \left[
\hat \C^\mathrm{hom} \nabla \bu_{L,\epsilon}+ (\mathbbm{1}_{\M^\epsilon} \C_1(\delta_{j\cdot}\delta_{k\cdot}+\nabla \hat\Nb^{(j,k)}(\cdot/\e)) -\hat \C^\mathrm{hom} \delta_{j\cdot}\delta_{k\cdot})\partial_k [\bu_{L,\epsilon}]_j
\right.
\\
\left.
+
\epsilon\, \C_1 (\nabla\partial_k [\bu_{L,\epsilon}]_j) \otimes\hat\Nb^{(j,k)}(\cdot/\e)\right]\cdot \nabla \tilde \bv^\epsilon.
\end{multline}
In view of \eqref{proof part2 equation 10}, the elementary estimate 
\begin{equation}
\label{proof part2 equation 35bis}
\|\nabla^2 \bu_{L,\epsilon}\|_{[L^\infty(\R^d)]^{d^3}}\le C L^{-d/2}
\end{equation}
 and \eqref{proof part2 equation 12}, the last term in the RHS of \eqref{proof part2 equation 35} can be estimated as
\begin{equation}
\label{proof part2 equation 36}
\left|\int_{\M^\epsilon} \left[\epsilon\, \C_1 (\nabla\partial_k [\bu_{L,\epsilon}]_j) \otimes\hat\Nb^{(j,k)}(\cdot/\e)\right]\cdot \nabla \tilde \bv\right|\le C\epsilon R\, \|\operatorname{sym}\nabla \tilde \bv^\epsilon\|_{\Lb(\R^d)}\,.
\end{equation}

Next, let us denote by $\hat{\mathbbm{1}}_{\M^\epsilon}$ the characteristic function of the set $\tilde\square_{x_1}^{R+\kappa}$,
cf.~\eqref{proof part2 equation 5}, extended by periodicity to the whole of $\R^d$, and let us rewrite
\begin{multline}
\label{proof part2 equation 38}
\mathbbm{1}_{\M^\epsilon} \C_1(\delta_{j\cdot}\delta_{k\cdot}+\nabla \hat\Nb^{(j,k)}(\cdot/\e)) -\hat \C^\mathrm{hom} \delta_{j\cdot}\delta_{k\cdot}\\
=
(\mathbbm{1}_{\M^\epsilon}-\hat{\mathbbm{1}}_{\M^\epsilon}) \C_1(\delta_{j\cdot}\delta_{k\cdot}+\nabla \hat\Nb^{(j,k)}(\cdot/\e))
+\hat{\mathbbm{1}}_{\M^\epsilon} \C_1(\delta_{j\cdot}\delta_{k\cdot}+\nabla \hat\Nb^{(j,k)}(\cdot/\e))-\hat \C^\mathrm{hom} \delta_{j\cdot}\delta_{k\cdot}\,.
\end{multline}
In view of \eqref{proof part2 equation 21}, we will need to estimate $\mathbbm{1}_{\M^\epsilon}-\hat{\mathbbm{1}}_{\M^\epsilon}$ in $\square_{\epsilon \overline{x}}^{L}$. We claim that, for $R$ large enough, we have
\begin{equation}
\label{proof part2 equation 39}
\int_{\square_{\epsilon \overline{x}}^{L}}|\mathbbm{1}_{\M^\epsilon}-\hat{\mathbbm{1}}_{\M^\epsilon}|\le C \delta L^d.
\end{equation}

The inequality \eqref{proof part2 equation 39} follows from:
\begin{enumerate}[(i)]
\item Formula \eqref{proof part2 equation 13};
\item The estimate
\[
\int_{\square_{\epsilon x_j}^{\epsilon R}}|\mathbbm{1}_{\M^\epsilon}-\hat{\mathbbm{1}}_{\M^\epsilon}|\le C \delta (\epsilon \,R)^d, \qquad j\in\{1, \ldots, N^d\},
\]
which is a consequence of \eqref{proposition cubes equaiton 3} and the fact that the surface area of inclusions in each $\square_{\epsilon x_j}^{\epsilon R}$, $j\in\{1, \ldots, N^d\}$, is bounded above by the volume of the cube, up to a constant (this follows immediately from Assumption \ref{main assumption 2});
\item
The fact that
\[
\frac{\left|\square_{\epsilon x_j}^{\epsilon (R+\kappa)}\setminus \square_{\epsilon x_j}^{\epsilon R}\right|}{|\square_{\epsilon \overline{x}}^{L}|}
\]
can be made arbitrarily small by choosing $R$ (at the start of the proof) sufficiently large.
\end{enumerate}

When substituting \eqref{proof part2 equation 38} into \eqref{proof part2 equation 35}, the first term on the RHS of \eqref{proof part2 equation 38} gives rise to two contributions: 
$(\mathbbm{1}_{\M^\epsilon}-\hat{\mathbbm{1}}_{\M^\epsilon}) \C_1 \nabla \bu_{L,\epsilon} \cdot \nabla \tilde \bv^\epsilon$
and
$(\mathbbm{1}_{\M^\epsilon}-\hat{\mathbbm{1}}_{\M^\epsilon}) \C_1(\nabla \hat\Nb^{(j,k)}(\cdot/\e)\,\partial_k (\bu_{L,\epsilon})_j)\cdot \nabla \tilde \bv^\epsilon$. The former can be estimated via \eqref{proof part2 equation 35bis} and \eqref{proof part2 equation 39} as
\begin{equation}
\label{proof part2 equation 40}
\left|\int_{\R^d} (\mathbbm{1}_{\M^\epsilon}-\hat{\mathbbm{1}}_{\M^\epsilon}) \C_1 \nabla \bu_{L,\epsilon} \cdot \nabla \tilde \bv \right|\le C \,\delta^{\nicefrac{1}{2}}\|\sym\nabla \tilde \bv^\epsilon\|_{[\Lb^2(\R^d)]^2}\,.
\end{equation}
Combining again \eqref{proof part2 equation 35bis}, \eqref{proof part2 equation 39} and the higher regularity of the corrector \eqref{proof part2 equation 11} by means of H\"older's inequality, and applying  Korn's inequality to  $\nabla\tilde \bv^\epsilon$,  we can estimate the latter as
\begin{equation}
\label{proof part2 equation 41}
\left|\int_{\R^d} (\mathbbm{1}_{\M^\epsilon}-\hat{\mathbbm{1}}_{\M^\epsilon}) \C_1(\nabla \hat\Nb^{(j,k)}(\cdot/\e)\,\partial_k [\bu_{L,\epsilon}]_j)\cdot \nabla \tilde \bv^\epsilon \right|\le C \,\delta^{\frac{p-2}{2p}}\, \|\sym \nabla \tilde \bv^\epsilon\|_{[\Lb^2(\R^d)]^2}\,
\end{equation}
for some $p>2$. 

We are left to deal with the last two terms on the RHS of \eqref{proof part2 equation 38}. To this end, we observe that, for fixed $\beta$, the vector field
\begin{equation*}
\label{proof part2 equation 42}
\hat{\mathbbm{1}}_{\M^\epsilon} (\C_1)_{\alpha\beta \mu\nu}(\delta_{j\mu}\delta_{k\nu}+\nabla_\mu N^{(j,k)}_\nu)-(\hat \C^\mathrm{hom})_{\alpha\beta \mu\nu}\,\delta_{j\mu}\delta_{k\nu}
\end{equation*}
(whose components are labelled by the index $\alpha$) is divergence free --- because of \eqref{proof part2 equation 6} --- and has zero mean --- because of \eqref{proof part2 equation 7}. Hence, by a classical construction, see e.g. \cite{ZKO}, there exist $d^3$ skew-symmetric,  zero-mean matrix functions $\Gb_{\cdot\,\cdot;\beta}^{(j,k)}\in [\Hb^1_{\mathrm{per}}(\square_{x_1}^{R+\kappa})]^2$, $\beta,j,k\in\{1,\dots,d\}$, such that
\begin{equation}
\label{proof part2 equation 43}
\partial_\gamma \Gb_{\gamma \alpha;\beta}^{(j,k)}
=
\hat{\mathbbm{1}}_{\M^\epsilon} (\C_1)_{\alpha\beta \mu\nu}(\delta_{j\mu}\delta_{k\nu}+\nabla_\mu N^{(j,k)}_\nu(\cdot/\e))-(\hat \C^\mathrm{hom})_{\alpha\beta \mu\nu}\,\delta_{j\mu}\delta_{k\nu}
\,.
\end{equation}
Observe that the RHS of \eqref{proof part2 equation 43} is symmetric in the pair of indices $\alpha$ and $\beta$.
Moreover, the explicit construction of $\Gb_{\gamma \alpha;\beta}^{(j,k)}$ implies
\begin{multline}
\label{proof part2 equation 44}
\|\Gb_{\cdot\, \cdot;\beta}^{(j,k)}\|_{[\Lb^2(\square_{x_1}^{R+\kappa})]^2}
\\
\le C R \|\hat{\mathbbm{1}}_{\M^\epsilon} (\C_1)_{\cdot\,\cdot \mu\nu}(\delta_{j\mu}\delta_{k\nu}+\nabla_\mu N^{(j,k)}_\nu(\cdot/\e)-(\hat \C^\mathrm{hom})_{\cdot\,\cdot \mu\nu}\,\delta_{j\mu}\delta_{k\nu}\|_{[\Lb^2(\square_{x_1}^{R+\kappa})]^2}
\\
\overset{\eqref{proof part2 equation 9}}{\le} C R^{\nicefrac{d}2 + 1}\,.
\end{multline}

In view of \eqref{proof part2 equation 43}, integrating by parts and using the antisymmetry of $\Gb_{\cdot\,\cdot;\beta}^{(j,k)}$, we get 
\begin{multline}
\label{proof part2 equation 45}
\int_{\R^d} \left[
(\hat{\mathbbm{1}}_{\M^\epsilon} \C_1(\delta_{j\cdot}\delta_{k\cdot}+\nabla \hat\Nb^{(j,k)}(\cdot/\e))-\hat \C^\mathrm{hom} \delta_{j\cdot}\delta_{k\cdot})\,\partial_k [\bu_{L,\epsilon}]_j\right]\cdot \nabla \tilde \bv^\epsilon
\\
=
\int_{\R^d} \left[
\epsilon\,\partial_\gamma \Gb_{\gamma \alpha;\beta}^{(j,k)}\,\partial_k [\bu_{L,\epsilon}]_j\right]\, \partial_\alpha \tilde \bv^\epsilon_\beta
\\
=
-\int_{\R^d}
\epsilon\,\Gb_{\gamma \alpha;\beta}^{(j,k)}\,\partial_\gamma\partial_k [\bu_{L,\epsilon}]_j\,\partial_\alpha \tilde\bv^\epsilon_\beta
-\int_{\R^d}
\underset{=0}{\underbrace{\epsilon\,\Gb_{\gamma \alpha;\beta}^{(j,k)}\,\partial_k [\bu_{L,\epsilon}]_j\, \partial_\gamma\partial_\alpha \tilde\bv^\epsilon_\beta}}
\\
=
-\frac12\int_{\R^d}
\epsilon\,\Gb_{\gamma \alpha;\beta}^{(j,k)}\,\partial_\gamma\partial_k [\bu_{L,\epsilon}]_j\,\left(\partial_\alpha \tilde\bv^\epsilon_\beta+\partial_\beta \tilde\bv^\epsilon_\alpha\right)\,.
\end{multline}

Hence, by combining \eqref{proof part2 equation 36}, \eqref{proof part2 equation 38}, \eqref{proof part2 equation 40}, \eqref{proof part2 equation 41}, \eqref{proof part2 equation 45}, \eqref{proof part2 equation 35bis},  \eqref{proof part2 equation 44}, and \eqref{estimates v extension equation 3}, we obtain that \eqref{proof part2 equation 35} can be rewritten as
\begin{equation}
\label{proof part2 equation 46}
\int_{\M^\epsilon} \C_1 \nabla \bu^\epsilon_{LC}\cdot \nabla \bv
=
\int_{\R^d}
\hat \C^\mathrm{hom} \nabla \bu_{L,\epsilon}\cdot \nabla \bv + \mathcal{R}_1,
\end{equation}
where
\begin{equation*}
\label{proof part2 equation 47}
|\mathcal{R}_1| \le C  \left[R\,\epsilon+\delta^\frac{p-2}{2p}\right] \|\sym \nabla\tilde \bv^\epsilon\|_{[\Lb^2(\R^d)]^2}\,.
\end{equation*}
Here we are assuming, without loss of generality, that $\delta<1$, so that $\delta^{\frac12}< \delta^{\frac{p-2}{2p}}$ for $p>2$.

\

\textbf{Step 8}: \emph{Estimates: part 2}.\\
Let us now examine the second term on the RHS of \eqref{proof part2 equation 34}.

We have
\begin{multline}
\label{proof part2 equation 48}
\epsilon^2\int_{\I^\epsilon} \C_0 \nabla \bu^\epsilon_{LC}\cdot \nabla \bv
=
\epsilon^2\int_{\I^\epsilon} \C_0 \left((1+\lambda_0 \bb^\epsilon_{\lambda_0})\nabla\bu_{L,\epsilon}\right)\cdot \nabla \bv
\\
+
\epsilon^2\int_{\I^\epsilon} \lambda_0[\C_0]_{\alpha\beta\mu\nu} \,\partial_\mu[\bb^\epsilon_{\lambda_0}]_{\nu\gamma}\,[\bu_{L,\epsilon}]_\gamma \, \partial_\alpha[\bv^\epsilon_0]_\beta
+
\epsilon^2\int_{\I^\epsilon} \C_0 \nabla (\epsilon \partial_k [\bu_{L,\epsilon}]_j \hat\Nb^{(j,k)}(\cdot/\epsilon))\cdot \nabla \bv\,.
\end{multline}
Now, integrating by parts and taking into account \eqref{proof lemma properties b lambda equation 1}, the second contribution to the RHS can be recast as
\begin{multline}
\label{proof part2 equation 50}
\epsilon^2\int_{\I^\epsilon} \lambda_0\C_0 \,\nabla\bb^\epsilon_{\lambda_0}\,\bu_{L,\epsilon}\cdot \nabla (\bv^\epsilon_0+\tilde \bv^\epsilon)
\\
=
\int_{\I^\epsilon} \lambda_0 (1+\lambda_0 \bb^\epsilon_{\lambda_0})\,\bu_{L,\epsilon}\cdot \bv^\epsilon_0
-\epsilon^2\int_{\I^\epsilon} \lambda_0 [\C_0]_{\alpha\beta\mu\nu} \,\partial_\mu[\bb^\epsilon_{\lambda_0}]_{\nu\gamma}\,\partial_\alpha[\bu_{L,\epsilon}]_\gamma \, [\bv^\epsilon_0]_\beta
\\
+
\epsilon^2\int_{\I^\epsilon} \lambda_0\C_0 \left(\nabla\bb^\epsilon_{\lambda_0}\,\bu_{L,\epsilon}\right)\cdot \nabla \tilde \bv^\epsilon\,.
\end{multline}

Substituting \eqref{proof part2 equation 50} into \eqref{proof part2 equation 48} and using 
\eqref{proof part2 equation 9}, \eqref{proof part2 equation 10}, 
 \eqref{lemma properties b lambda equation 1}, 
and
\eqref{estimates v extension equation 1}--\eqref{estimates v extension equation 3}
we arrive at
\begin{equation}
\label{proof part2 equation 51}
\epsilon^2\int_{\I^\epsilon} \C_0 \nabla \bu^\epsilon_{LC}\cdot \nabla \bv=
\int_{\I^\epsilon} \lambda_0 (1+\lambda_0 \bb^\epsilon_{\lambda_0})\,\bu_{L,\epsilon}\cdot \bv^\epsilon_0+ \mathcal{R}_2,
\end{equation}
where
\begin{equation*}
\label{proof part2 equation 52}
|\mathcal{R}_2|\le C(\epsilon+\epsilon^2 \,R)\, \sqrt{a^\epsilon(\bv,\bv)}\,.
\end{equation*}

\

\textbf{Step 9}: \emph{Back to the bilinear form}.\\
Substituting \eqref{proof part2 equation 46} and \eqref{proof part2 equation 51} into \eqref{proof part2 equation 34}, we obtain
\begin{equation}
\label{proof part2 equation 53}
a^\epsilon(\bu^\epsilon_{LC},\bv)=
\int_{\R^d} 
\hat \C^\mathrm{hom} \nabla \bu_{L,\epsilon}\cdot \nabla \tilde\bv^\epsilon
+
\int_{\R^d} \lambda_0 (1+\lambda_0 \bb^\epsilon_{\lambda_0})\,\bu_{L,\epsilon}\cdot \bv^\epsilon_0
+
\int_{\R^d} \bu^\epsilon_{LC}\cdot \bv +\mathcal{R}_3\,, 
\end{equation}
where
\begin{equation}
\label{proof part2 equation 54}
|\mathcal{R}_3|\le C\left[(1+R)\epsilon+\delta^{\frac{p-2}{2p}}\right]\, \sqrt{a^\epsilon(\bv,\bv)}\,.
\end{equation}

With \eqref{proof part2 equation 20} in mind, and recalling \eqref{proof part2 equation 28} and
\eqref{proof part2 equation 22}, we rewrite
\eqref{proof part2 equation 53} as
\begin{multline}
\label{proof part2 equation 55}
a^\epsilon(\bu^\epsilon_{LC},\bv)=
\int_{\R^d} 
\left(-\dive \hat \C^\mathrm{hom} \nabla -\boldsymbol{\ell}\right) \bu_{L,\epsilon}\cdot \tilde\bv^\epsilon
\\
+
\int_{\R^d} \left[\boldsymbol{\ell}- \lambda_0 (1+\lambda_0 \bb^\epsilon_{\lambda_0})\right]\,\bu_{L,\epsilon}\cdot 
\tilde \bv^\epsilon
\\
+
\int_{\R^d} \left[\lambda_0 (1+\lambda_0 \bb^\epsilon_{\lambda_0})\bu_{L,\epsilon}+ \bu^\epsilon_{LC}\right]\cdot \bv +\mathcal{R}_3\,.
\end{multline}

By combining formulae 
\eqref{proof part2 equation 55},
\eqref{proof part2 equation 54},
\eqref{proof part2 equation 20},
and using \eqref{proof part2 equation 28} and \eqref{proof part2 equation 24}, we obtain
\begin{multline}
\label{proof part2 equation 56}
|a^\epsilon(\bu^\epsilon_{LC}-\hat \bu^\epsilon_L,\bv)|\le C\left[(1+R)\epsilon+\delta^{\frac{p-2}{2p}}+L^{-1}\right]\, \sqrt{a^\epsilon(\bv,\bv)}
\\
+
\left|\int_{\R^d} \left[\boldsymbol{\ell}- \lambda_0 (1+\lambda_0 \bb^\epsilon_{\lambda_0})\right]\,\bu_{L,\epsilon}\cdot 
\tilde \bv^\epsilon\right|\,.
\end{multline}
Therefore, the remaining task is to estimate the second term on the RHS of the latter inequality.

\

\textbf{Step 10}: \emph{Estimates: part 3}.\\
The argument consists in utilising  \eqref{proof part2 equation 20}  while replacing  $\bu_{L,\epsilon} \otimes
\tilde \bv^\epsilon$ with its average values on the cubes of size $\e(R+\kappa)$, cf. \eqref{proof part2 equation 13}, and using  Poincar\'e's inequality to control the error.
\begin{multline}
\label{proof part2 equation 58}
\left|\int_{\R^d} \left(\boldsymbol{\ell}- \lambda_0 (1+\lambda_0 \bb^\epsilon_{\lambda_0})\right)\,\bu_{L,\epsilon}\cdot 
\tilde \bv^\epsilon\right|
\\
\le 
\sum_{i=1}^{N^d}\left|\int_{\epsilon\square_{\epsilon x_i}^{R+\kappa}} \left(\boldsymbol{\ell}- \lambda_0 (1+\lambda_0 \bb^\epsilon_{\lambda_0})\right)\cdot \fint_{\epsilon\square_{\epsilon x_i}^{R+\kappa}} \bu_{L,\epsilon}\otimes 
\tilde \bv^\epsilon \right|
\\
+
\left|\sum_{i=1}^{N^d}\int_{\epsilon\square_{\epsilon x_i}^{R+\kappa}} \left(\boldsymbol{\ell}- \lambda_0 (1+\lambda_0 \bb^\epsilon_{\lambda_0})\right) \cdot \left(\bu_{L,\epsilon}\otimes 
\tilde \bv^\epsilon - \fint_{\epsilon\square_{\epsilon x_i}^{R+\kappa}} \bu_{L,\epsilon}\otimes 
\tilde \bv^\epsilon\right)\,\right|
\\
\overset{\eqref{proposition cubes equaiton 5}}{\le} 
C\,\delta\,\|\tilde \bv^\epsilon\|_{\Lb^2(\R^d)}
\\
+ C\epsilon(R+\kappa)\|\boldsymbol{\ell}- \lambda_0 (1+\lambda_0 \bb^\epsilon_{\lambda_0})\|_{\Lb^2(\square_{\epsilon\overline x}^{\epsilon N(R+\kappa)})}
\left\|\nabla\left(\bu_{L,\epsilon}\otimes 
\tilde \bv^\epsilon\right)\right\|_{\Lb^2(\square_{\epsilon\overline x}^{\epsilon N(R+\kappa)})}
\\
\overset{\eqref{lemma properties b lambda equation 1}}{\le} 
C\left[\epsilon(R+\kappa) +\delta \right]\left( \|\tilde \bv^\epsilon\|_{\Lb^2(\R^d)}+\|\sym \tilde \nabla\bv^\epsilon\|_{[\Lb^2(\R^d)]^d}\right)\,.
\end{multline}
In the last step we also used the Korn inequality in 
$\square_{\epsilon\overline x}^{\epsilon N(R+\kappa)}$, and then the domain monotonicity of the $L^2$-norm.

\

\textbf{Step 11}: \emph{Conclusion}.\\
By combining \eqref{proof part2 equation 56}, \eqref{proof part2 equation 58} and \eqref{estimates v extension equation 3} we arrive at
\eqref{proof part2 equation 31} with
\begin{equation*}
\label{proof part2 equation 59}
\mathcal{E}(\epsilon,\delta, L)=(1+R)\epsilon+\delta^{\frac{p-2}{2p}}+L^{-1}\,,
\end{equation*}
which clearly satisfies \eqref{proof part2 equation 32}. 

This concludes the proof.
\end{proof}

\section{Examples}
\label{Examples}
In this section we will discuss some explicit examples, to showcase the different properties of $\boldsymbol{\beta}(\lambda)$ and $\beta_\infty(\lambda)$, as well as to compare and contrast the stochastic and the periodic settings.

\color{black}
Note that, in general, $\beta(\lambda)$ (recall~\eqref{non-bold beta}) and $\beta_\infty(\lambda)$ are quite different: as the examples will show, $\boldsymbol{\beta}(\lambda)$ results from ``averaging'' over the inclusions, whereas $\beta_\infty(\lambda)$ from taking a ``supremum''. This means, in particular, that $\sigma(\A^\hom)$ is generally a proper subset of $\mathcal{G}$ --- cf.~Example~3. Remarkably, this is not the case in Example~1, where the randomness (in the final part of the example) is designed in such a way that $\beta_\infty(\lambda)=\beta(\lambda)$, which implies $\sigma(\mathcal{A}^\hom)=\mathcal{G}$.

Let us emphasise that in Examples 1 and 3 Assumption~\ref{assumption finite correlation} is satisfied, and hence the limiting spectrum coincides with the set $\mathcal{G}$ --- see also Remark~\ref{remark after examples}(ii).
\color{black}

\subsection{Example 1: statistical micro-symmetries}
\label{Example 1: statistical micro-symmetries}

Let us work in Euclidean space $\mathbb{R}^3$ equipped with Cartesian coordinates $x_j$, $j\in\{1,2,3\}$.

Let us assume, for simplicity, that for almost every $\omega\in \Omega$ each inclusion $\omega^k$ belongs to a finite set of shapes, up to rigid translations and rotations, and that the tensor $\C_0$ is isotropic, i.e. it acts on 2-tensors as (see, e.g., \cite{ciarlet})
\begin{equation}
\label{isotropic tensor}
\C_0 \xi =c_1 (\textrm{tr} \xi)I+c_2 \sym \xi\,
\end{equation}
for some $c_1,c_2 \in \mathbb{R}$. 

Let us denote by $\omega^0$ the connected component of $\omega$ containing the origin of $\R^3$. In the case when $0\notin \omega$ we set $\o^0 = \emptyset$. 
It was shown in \cite[Lemma~B.4]{CCV2}\footnote{The probabilistic setup of \cite{CCV2} is slightly different from that of the current paper, but it is equivalent to it. We refer the reader to \cite[Remark~2.18]{decay} for a detailed comparison of the two.} that the set-valued mapping $P_0:\omega \mapsto \omega^0$ is measurable, when the target space is endowed with the $\sigma$-algebra $\mathcal{H}$ generated by the topology on $[-2,2]^d$ induced by the Hausdorff distance. Observe that, by assumption, the mapping $P_0$ takes values in a finite set of shapes, up to rigid translations and rotations.

Let us consider the following two types of discrete isometries of $\R^3$.
\vspace{-1.8mm}
\begin{enumerate}[(i)]
\itemsep0em
\item Reflections $\{S_j\}_{j=1}^3$: $S_j$ describes the reflection across the plane $\{x_j=0\}$.
\item Rotations $\{R_j^{\pi/2}\}_{j=1}^3$: $R_j^{\pi/2}$ describes the counterclockwise rotation by  $\pi/2$ about the $x_j$ coordinate axis. 
\end{enumerate} 	

Suppose that the inclusions possess the following additional statistical micro-symmetries:
\begin{eqnarray} 
\label{sim1} 
P(\omega^0 \in \mathfrak{H})=P(\omega^0 \in S_j \mathfrak{H}),& & \qquad \forall \mathfrak{H} \in \mathcal{H},\forall j\in\{1,2,3\}
\end{eqnarray} 
and
\begin{eqnarray} 
\label{sim2} 
P(\omega^0 \in \mathfrak{H})=P(\omega^0 \in R^{\pi/2}_j \mathfrak{H}),& & \qquad \forall \mathfrak{H} \in \mathcal{H}, \forall j\in\{1,2,3\}.
\end{eqnarray} 
Here $S_j \mathfrak{H}$ and $R^{\pi/2}_j \mathfrak{H}$ denote the sets obtained by applying the appropriate symmetry on each element of $\mathfrak{H}$. 
The prefix ``micro'' refers to the fact that \eqref{sim1}  and \eqref{sim2} are concerned with statistical properties of individual inclusions, rather than with the collective properties of the set $\omega$. In plain English, the above conditions encode the fact that, on average, we have as many inclusions containing the origin with a given spatial orientation as we have with reflected --- in the case of \eqref{sim1} --- or with $\pi/2$-rotated  --- in the case of \eqref{sim2} --- about any of the three axes.

For $\lambda\not\in\sigma (\A_0)$
and $c \in \mathbb{R}^3$, 
let $\mathbf{b}^{c}_{\lambda,\omega_0} \in H_0^1(\omega^0;\mathbb{R}^3)$ be the unique solution to 
\begin{equation*}
\label{ex1lemma}
-\operatorname{div} \C_0 \nabla \mathbf{b}^{c}_{\lambda,\omega^0} - \lambda \,  \mathbf{b}^{c}_{\lambda,\omega^0}=c.
\end{equation*}
Note that by Lemma~\ref{lemma b in physical space} we have 
\begin{equation}
\label{example 1 equation 1}
\overline{\mathbf{b}^{(i)}_\lambda}(\omega)=\mathbf{b}^{e_i}_{\lambda,\omega^0}(0)\,.
\end{equation}
On account of the fact that $\C_0$ satisfies \eqref{isotropic tensor}, it is easy to see that we have
\begin{equation}
\label{simeff1} 
\mathbf{b}^{e_k}_{\lambda,S_j\omega^0}=S_j\, \mathbf{b}^{[S_j e_k]}_{\lambda,\omega^0} \circ S_j
\end{equation}
and
\begin{equation}
\label{simeff2} 
\mathbf{b}^{e_k}_{\lambda,R^{\pi/2}_j\omega^0}=R^{\pi/2}_j \,\mathbf{b}^{[(R^{\pi/2}_j)^T e_k]}_{\lambda,\omega^0} \circ (R^{\pi/2}_j)^T
\end{equation}
for all $j,k\in{1,2,3}$. Here the superscript $T$ stands for the transposed (which in  case of rotations coincides with the inverse).

Formulae \eqref{beta matrix}, \eqref{example 1 equation 1}, \eqref{simeff1} and \eqref{sim1} yield
\begin{equation}
\label{example 1 equation 2}
\bbeta(\lambda)= S_j \,\bbeta(\lambda) \,S_j \qquad \forall j \in\{1,2,3\}\,,
\end{equation}
whereas formulae \eqref{beta matrix}, \eqref{example 1 equation 1}, \eqref{simeff2} and \eqref{sim2} yield
\begin{equation}
\label{example 1 equation 3}
\bbeta(\lambda)= R^{\pi/2}_j \,\bbeta(\lambda)\, (R^{\pi/2}_j)^T \qquad \forall j \in\{1,2,3\}\,.
\end{equation}

\begin{proposition}
\label{proposition 1 example 1}
\begin{enumerate}[(a)]
\item 
Under assumption \eqref{sim1}, the matrix $\bbeta(\lambda)$ is diagonal.

\item
Under assumption \eqref{sim2}, the matrix $\bbeta(\lambda)$ is scalar, i.e., proportional to the identity matrix.
\end{enumerate}
\end{proposition}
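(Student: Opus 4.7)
The plan is to deduce both claims as pure consequences of the matrix identities \eqref{example 1 equation 2} and \eqref{example 1 equation 3}, without any further probabilistic input. Since by Lemma~\ref{lemma beta matrix symmetric} the matrix $\bbeta(\lambda)$ is symmetric, it suffices in each case to determine its off-diagonal entries and, where applicable, compare its diagonal entries.

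For part (a), observe that $S_j$ is the diagonal matrix whose $j$-th entry equals $-1$ and whose other entries equal $+1$. Thus the conjugation $\bbeta(\lambda)\mapsto S_j \bbeta(\lambda) S_j$ flips the sign of precisely those entries $\bbeta_{jk}(\lambda)$ and $\bbeta_{kj}(\lambda)$ with $k\ne j$, while leaving every other entry unchanged. Invoking \eqref{example 1 equation 2} therefore forces $\bbeta_{jk}(\lambda)=0$ for all $k\ne j$. Letting $j$ range over $\{1,2,3\}$ yields the vanishing of every off-diagonal element, so $\bbeta(\lambda)$ is diagonal.

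For part (b), I would first show that \eqref{example 1 equation 3} already implies diagonality of $\bbeta(\lambda)$, by applying it with $R_j^{\pi/2}$ replaced by its square $R_j^{\pi}=\mathrm{diag}(\pm 1,\pm 1,\pm 1)$ (two minus signs in the entries $k\ne j$). For each fixed $j$ this reproduces the reflection argument of part (a), eliminating $\bbeta_{kl}(\lambda)$ whenever $k$ or $l$ differs from $j$; letting $j$ vary cleans up all off-diagonal entries. Once $\bbeta(\lambda)=\mathrm{diag}(\beta_1,\beta_2,\beta_3)$, a direct computation of $R_3^{\pi/2}\,\bbeta(\lambda)\,(R_3^{\pi/2})^T$ (exchanging $e_1\leftrightarrow e_2$ up to sign) shows that this conjugation swaps $\beta_1$ and $\beta_2$, whence $\beta_1=\beta_2$ by \eqref{example 1 equation 3}. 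The analogous computation with $R_1^{\pi/2}$ gives $\beta_2=\beta_3$, so $\bbeta(\lambda)$ is a scalar multiple of the identity.

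There is no substantive obstacle here: the work done in deriving \eqref{simeff1}--\eqref{example 1 equation 3} has already reduced the proposition to elementary linear algebra, and the only care needed is in checking, for part (b), that the square of each $R_j^{\pi/2}$ supplies enough reflection-type conjugations to kill the off-diagonal part before appealing to the genuine $\pi/2$-rotation to equalise the diagonal entries.
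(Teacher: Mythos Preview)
Your proof is correct and follows the same approach as the paper, which simply records that (a) and (b) follow from a straightforward examination of \eqref{example 1 equation 2} and \eqref{example 1 equation 3}, adding only the remark that for (b) any two of the three rotations already suffice. You have spelled out those elementary linear-algebra computations explicitly.

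One small slip in wording: in part (b), conjugation by $R_j^{\pi}=\mathrm{diag}(\pm1,\pm1,\pm1)$ kills $\bbeta_{kl}(\lambda)$ precisely when \emph{exactly one} of $k,l$ equals $j$, not ``whenever $k$ or $l$ differs from $j$'' (the latter would falsely include, e.g., $\bbeta_{23}$ for $j=1$, which in fact survives since $(-1)(-1)=1$). Since you then let $j$ range over $\{1,2,3\}$, every off-diagonal entry is still eliminated and the argument goes through unchanged.
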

\begin{proof}
Parts (a) and (b) follow from a straightforward examination of \eqref{example 1 equation 2} and \eqref{example 1 equation 3}, respectively. Observe that, in fact, for (b) to hold it suffices to have the identity \eqref{example 1 equation 3} for any two of the $R_j^{\pi/2}$'s.
\end{proof}

Let us now specialise our geometry even further, by considering a cuboid $Q=(-\frac{l_1}2,\frac{l_1}2)\times(-\frac{l_2}2,\frac{l_2}2)\times(-\frac{l_3}2,\frac{l_3}2)$, $l_i\ne l_j$ for $i\ne j$. 
Requiring that the edges are of different length serves the purpose of embedding some directional anisotropy into the problem.

If one considers the periodic problem in which the inclusions are obtained by rigid translations of $Q$, then clearly \eqref{sim1} is satisfied and the corresponding $\bbeta$-matrix, which we denote by $\bbeta_\mathrm{per}^Q(\lambda)$, is diagonal by Proposition~\ref{proposition 1 example 1}(a).
Let $\{\nu_{n}\}_{n\in\mathbb{N}}$ be the the eigenvalues of the operator $\A_0^Q$, enumerated in increasing order and without account of multiplicities (so that $(\nu_n,\nu_{n+1})$ is non-empty). By definition, the function $\bbeta_\mathrm{per}^Q$ is well-defined in $\mathbb{R}\setminus \bigcup_n \{\nu_n\}$. It was shown in \cite[Section 4.1]{avila08} that as $\lambda$ ranges in $(\nu_n, \nu_{n+1})$ the smallest eigenvalue of $\bbeta_{\textrm{per}}^Q(\lambda)$ ranges from $-\infty$ to either some finite value or $+\infty$. On the other hand, the largest eigenvalue of  
$\bbeta_{\textrm{per}}^Q(\lambda)$ ranges from either some finite value or $-\infty$ to $+\infty$. 

\

In the remainder of this subsection we will examine two examples satisfying \eqref{sim2}, built out of $Q$.

\

First, let us construct a unit cell containing the three inclusions $R_j^{\pi/2}Q$, $j\in\{1,2,3\}$, suitably scaled and positioned relative to one another so that they satisfy Assumption~\ref{main assumption}. Extending said cell by periodicity to the whole of $\R^3$ produces a single set of inclusions $\omega$. Our probability space is then the collection of all such $\omega$'s together with their translations by random vectors chosen uniformly in the unit cell $[0,1]^3$ (cf.~\cite[Sections~7.2 and~7.3]{CCV1}). We refer the reader to \cite[Section~5.6]{CCV2} for further details on the construction of the probability space. In this case,  the $\beta$-matrix, which we denote by $\bbeta_{\textrm{per}}(\lambda)$, is a multiple of the unit matrix by Proposition~\ref{proposition 1 example 1}(b). More precisely, we have
\begin{equation}
\label{example 1 equation 4}
\bbeta_{\textrm{per}}(\lambda)=\frac{1}{3} \left(\operatorname{tr} \bbeta^Q_{\textrm{per}}(\lambda)\right) I.
\end{equation}
Moreover, by direct inspection, it is easy to see that the $\beta_\infty$-function reads
\begin{equation}
\label{example 1 equation 5}
(\beta_\mathrm{per})_{\infty}(\lambda)=\frac{1}{3} \operatorname{tr} \bbeta^Q_{\textrm{per}}(\lambda)\,.
\end{equation}
In this case, $(\beta_\mathrm{per})_{\infty}(\lambda)$ is the mean of the eigenvalues of $\bbeta_{\textrm{per}}^Q(\lambda)$, including its smallest and biggest eigenvalues, therefore it necessarily ranges from $-\infty$ to $+\infty$ as $\lambda$ varies in $(\nu_n, \nu_{n+1})$, due to the results from \cite[Section 4.1]{avila08} reported above.

\

Next, let us construct another collection of inclusions $\omega$ by placing at the centre of each cell $\square+z$, $z\in\Z^3$, one of the three shapes $R^{\pi/2}_j Q$, $j\in{1,2,3}$, independently and with probability $1/3$. As before, we take our probability space to be the set of all such $\omega$'s together with their translations by random vectors chosen uniformly in the unit cell $[0,1]^3$. By direct inspection, one sees that in this case the $\bbeta$-matrix is still given by the expression on the right hand side of \eqref{example 1 equation 4}, but we have
\begin{equation}
\label{example 1 equation 6}
\beta_{\infty} (\lambda)=\max_{k\in\{1,2,3\}} [\bbeta^Q_{\textrm{per}}(\lambda)]_{kk}\,,
\end{equation}
compare with \eqref{example 1 equation 5}. Therefore, by choosing $Q$ appropriately one can have that in some interval $(\nu_n, \nu_{n+1})$ the quantity $\beta_{\infty} (\lambda)$ is always positive.

\

The above examples demonstrate, in concrete scenarios, that $\bbeta(\lambda)$ is determined by the shape of the inclusions $\omega^0$ sitting at the origin and their probability distribution, whereas $\beta_{\infty}(\lambda)$ is sensitive to the ``global geometry'' of the collection of inclusions. Furthermore, whereas in the periodic setting one needs each individual inclusion to be invariant under symmetries in order to influence the form of $\bbeta$-matrix \cite[Section~4]{pastukova}, the stochastic setting allows one to do so under the weaker assumption of \emph{statistical} symmetries (in the sense of \eqref{sim1}, \eqref{sim2}).


\subsection{Example 2: statistical macro-symmetries}
\label{Example 2: statistical macro-symmetries}

Whilst obtaining an isotropic homogenised tensor $\C^{\hom}$ in the purely periodic setting is not easy to achieve (if at all possible), this is something that can be done very naturally in the stochastic setting by means of statistical macro-symmetries. Here ``macro'' (as opposed to ``micro'' from Subsection~\ref{Example 1: statistical micro-symmetries}) refers to the fact that symmetries apply to the collection of inclusions as a whole. This is the subject of this short subsection.

\

Suppose that $\C_1$ is isotropic and that the inclusions possess the following additional statistical macro-symmetries:
\begin{equation} 
\label{macrosim}  
 P(\mathfrak{f})=P(R  \mathfrak{f}), \quad \forall  \mathfrak{f} \in \mathcal{F},\,\,\forall R \in SO(3).
 \end{equation}  
Here $R  \mathfrak{f}$ denotes the set obtained by applying the transformation $R$ to each element $\omega$ of $ \mathfrak{f}$. Recall that $\mathcal{F}$ is the $\sigma$-algebra introduced in subsection~\ref{Probabilistic and geometric setting}.

It then follows easily from \eqref{definition Chom} and \eqref{macrosim} that in this case we have 
\begin{equation*}
C^\hom (R\xi R^T)\cdot  (R\xi R^T)= \C^\hom \xi\cdot \xi, \quad \forall R \in SO(3),
\end{equation*}
which, in turn, implies that the tensor $\C^\hom$ is isotropic (see, e.g.,~\cite[Chapter~4]{ciarlet}). 

\

\textcolor{black}{
Note that this general example is not concerned with the spectral properties. In fact, one may come up with models satisfying the assumption~\eqref{macrosim}   such that in the limit one has either of the possibilities --- $\sigma(\A^\hom) = \mathcal{G}$ or $\sigma(\A^\hom)$ is a proper subset of $\mathcal{G}$ (whether the latter coincides with $\lim_{\varepsilon\to 0} \sigma(\A^\e)$ or not).}

\subsection{Example 3: random scaling}
\label{Example 3: random scaling}

As our last example, let us consider a distribution of inclusions obtained from a single shape $Q$ (not necessarily the same $Q$ \textcolor{black}{as in} subsection~\ref{Example 1: statistical micro-symmetries}) placed in each cell $\square+z$, $z\in\Z^3$, and randomly scaled by a factor $r\in [r_1,r_2]$, $0<r_1<r_2<1$. For simplicity, we assume that the scaling factor $r$ is chosen independently in each cell.
Let $\{(\nu_{n},\bvarphi_n)\}_{n\in\mathbb{N}}$ be the orthonormalised eigenpairs of the operator $\A_0^Q$, where eigenvalues are enumerated in increasing order and with account of multiplicities.

Let $X: \Omega \to \{-\infty\}\cup [r_1,r_2]$  be the random variable returning, for each configuration of inclusions $\omega$, the scaling factor of the inclusion $\omega^0$ containing the origin, where $X(\omega):=-\infty$ if $0\not\in \omega$. We denote by $\operatorname{supp} X$ the support of the random variable $X$. Then we have
\begin{equation*}
\label{example 3 equation 1} 
\sigma(\A_0)=\bigcup_{n \in \mathbb{N},\, r \in \textrm{supp} X\setminus \{-\infty\}}\{r^{-2}\, \nu_n\}
\end{equation*} 
and
\begin{equation*}
\label{example 3 equation 2} 
\bbeta(\lambda)= \frac{1}{P(X\in [r_1,r_2])}\int_{\textrm{supp} X \setminus \{-\infty\}} \bbeta_r(\lambda)\,dP(X=r)\,,
\end{equation*}
where
\begin{equation}
\label{example 3 equation 3} 
\bbeta_r(\lambda):=\lambda I+\lambda^2 \sum_{n\in \mathbb{N}}   r^{-6}\frac{\left(\int_{Q} \bvarphi_n(y)\,dy\right) \otimes \left(\int_{Q} \bvarphi_n(y)\,dy \right)}{r^{-2}\nu_n-\lambda}\,.
\end{equation}
Observe that the quantity \eqref{example 3 equation 3} is the $\beta$-matrix of the periodic problem with reference inclusion $rQ$.

On the other hand we have that 
\begin{equation*}
\label{example 3 equation 4} 
\beta_{\infty} (\lambda)= \sup_{r \in \textrm{supp}  X\setminus\{-\infty\}} \max_{\hat{k} \in \mathbb{S}^{d-1}} \hat{k}\cdot \bbeta_r(\lambda) \hat{k}\,. 
\end{equation*}

Once again, we see how the $\bbeta$-matrix is obtained by averaging over the shapes of the inclusions, whereas the $\beta_\infty$ function captures ``extremal'' properties (supremum over $r$ of the greatest eigenvalue of $\bbeta_r$).

\begin{remark}
\label{remark after examples}
\begin{enumerate}[(i)]

\item
One could also consider the case where geometry of the inclusions is fixed, but the tensor $\C_0$ is random. Since this requires one to slightly adjust the probability framework but yields analogous results with no additional difficulties, we refrain from discussing this example in detail.

\item 
Alternatively, one can rely on appropriate point processes (e.g., the random parking model) to position the inclusions randomly in space, without reference to periodic lattice. The additional technical aspects arising from doing so can be dealt with similarly to the scalar case \cite[subsection~5.6.6]{CCV2}. \textcolor{black}{We should like to mention that, although the random parking model does not satisfy Assumption~\ref{assumption finite correlation}, one can show, using the special properties of the model, that there exist non-typical areas of inclusions that enable one to carry out the arguments in the proof of Theorem~\ref{theorem G sub limiting spectrum}. Hence, for the random parking model we have that the limiting spectrum coincides with the set $\mathcal{G}$.}
\end{enumerate}
\end{remark}

\section{Acknowledgements}

The research of Matteo Capoferri was partially supported by the Leverhulme Trust Research Project Grant RPG-2019-240 and EPSRC Fellowship EP/X01021X/1.
The research of Mikhail Cherdantsev was partially supported by the Leverhulme Trust Research Project Grant RPG-2019-240.
The research of Igor Velčić was partially supported by the Croatian Science Foundation under the Grant Agreements no.~IP-2018-01-8904 (Homdirestroptcm) \textcolor{black}{and IP-2022-10-5181 (HOMeOS)}.

\color{black}
The authors are grateful to the anonymous referees for their thorough comments and suggestions, which helped improving the paper.
\color{black}

\begin{appendices}

\section{Auxiliary materials}
\label{Appendix auxiliary materials}

\subsection{Two-scale convergence}
\label{appendix: two-scale}

In this appendix we summarise, in the form of a theorem, some properties of stochastic two-scale convergence used in our paper, adjusted to the case of systems. We refer the reader to \cite{ZP} for further details.

\begin{theorem}
 \label{theorem stochastic two scale convergence}  

 Stochastic two-scale convergence enjoys the following properties.
\begin{enumerate}[(i)]

\item
Let $\{\bu^\epsilon\}$ be a bounded sequence in $\Lb^2(\R^d)$. Then there exists $\overline{\bu}\in \Lb^2(\R^d\times \Omega)$ such that, up to extracting a subsequence,
$
\bu^\epsilon \overset{2}{\rightharpoonup} \overline{\bu}.
$

\item
If $\bu^\epsilon \overset{2}{\rightharpoonup} \overline{\bu}$, then $\|\overline{\bu}\|_{\Lb^2(\R^d\times \Omega)}\le \liminf_{\epsilon\to 0}\|\bu^\epsilon\|_{\Lb^2(\R^d)}$.

\item
If $\bu^\epsilon\to \overline{\bu}$ in $\Lb^2(\R^d)$, then $\bu^\epsilon \overset{2}{\rightharpoonup} \overline{\bu}$.

\item 
Let $\{\bv^\epsilon\}$ be a uniformly bounded sequence in $\Lb^\infty(\R^d)$ such that $\bv^\epsilon\to \bv$ in $\Lb^1(\R^d)$, $\|\bv\|_{\Lb^{\infty}(\R^d)}<+\infty$. Suppose that $\{\bu^\epsilon\}$ is a bounded sequence in $\Lb^{2}(\R^d)$ such that
$\bu^\epsilon \overset{2}{\rightharpoonup} \overline{\bu}$ for some $\overline{\bu}\in \Lb^2(\R^d \times \Omega)$. Then $\bv^\epsilon\cdot
\bu^\epsilon \overset{2}{\rightharpoonup} \bv\cdot \overline{\bu}$.

\item
Let $\{\bu^\epsilon\}$ be a bounded sequence in $\Hb^1(\R^d)$. Then, there exist $\bu_0\in \Hb^1(\R^d)$ and $\overline{\mathbf{p}}\in L^2(\R^d; \mathcal V^2_{\rm pot}(\Omega))$ such that, up to extracting a subsequence,
\[
\bu^\epsilon\rightharpoonup \bu_0 \quad \text{in}\quad \Hb^1(\R^d),
\]
\[
\nabla \bu^\epsilon  \overset{2}{\rightharpoonup} \nabla \bu_0 +\overline{\mathbf{p}}.
\]

\item
Let $\{\bu^\epsilon\}$ be a bounded sequence in $\Lb^2(\R^d)$ such that $\{\epsilon \nabla \bu^\epsilon\}$ is bounded in $[\Lb^2(\R^d)]^d$. Then there exists $\overline{\bu}\in L^2(\R^d;\Hb^1(\Omega))$ such that, up to extracting a subsequence,
\[
\bu^\epsilon \overset{2}{\rightharpoonup} \overline{\bu},
\]
\[
\epsilon\nabla \bu^\epsilon \overset{2}{\rightharpoonup} \overline{\nabla_{y} \bu}.
\]

\end{enumerate} 
\end{theorem}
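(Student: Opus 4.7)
The plan is to establish all six properties by adapting the classical scalar stochastic two-scale convergence theory of Zhikov and Piatnitski \cite{ZP} to the vector-valued setting; since the dynamical system $T_y$ acts in the same way on every component, each statement reduces to its scalar counterpart applied componentwise, with the Birkhoff Ergodic Theorem as the sole stochastic input. Throughout, one works along the set $\Omega_t$ of typical realisations chosen, through a standard diagonal argument, so that the Ergodic Theorem holds simultaneously for every element of the countable dense family $\mathcal{C}\subset \Cb^\infty(\Omega)$.

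The crux is compactness, item (i). For this I would introduce, for each $\epsilon>0$, the bilinear functional
\begin{equation*}
\Phi^\epsilon(\varphi,\overline{\mathbf{f}}):=\int_{\R^d} \bu^\epsilon(x)\cdot \varphi(x)\, \mathbf{f}(x/\epsilon,\omega_0)\,dx,\qquad \varphi\in C_0^\infty(\R^d),\ \overline{\mathbf{f}}\in \mathcal{C},
\end{equation*}
and apply Cauchy--Schwarz to bound $|\Phi^\epsilon(\varphi,\overline{\mathbf{f}})|$ by $\|\bu^\epsilon\|_{\Lb^2(\R^d)}\,\|\varphi \mathbf{f}(\cdot/\epsilon,\omega_0)\|_{\Lb^2(\operatorname{supp}\varphi)}$; by the Ergodic Theorem the second factor converges to $\|\varphi\|_{L^2(\R^d)}\|\overline{\mathbf{f}}\|_{\Lb^2(\Omega)}$. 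Hence $\Phi^\epsilon$ is uniformly equicontinuous, and a standard diagonal extraction over a countable dense set of pairs $(\varphi,\overline{\mathbf{f}})$ produces a subsequence along which $\Phi^\epsilon$ converges. The limiting functional extends continuously to a bounded bilinear form on $L^2(\R^d)\times \Lb^2(\Omega)$, hence by Riesz representation corresponds to some $\overline{\bu}\in L^2(\R^d;\Lb^2(\Omega))\simeq \Lb^2(\R^d\times\Omega)$, proving (i). Item (ii) then follows by testing against a sequence of test functions saturating the Cauchy--Schwarz bound, and item (iii) by the observation that if $\bu^\epsilon\to \overline{\bu}$ strongly in $\Lb^2(\R^d)$ (so $\overline{\bu}$ is $\omega$-independent) then $\mathbf{f}(\cdot/\epsilon,\omega_0)\rightharpoonup \E[\overline{\mathbf{f}}]$ weakly in $L^2_{\loc}(\R^d)$, again by the Ergodic Theorem, so one may pass to the limit in the product.

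Item (iv) is handled by splitting
\begin{equation*}
\int_{\R^d}\bv^\epsilon \cdot\bu^\epsilon\,\varphi\,\mathbf{f}(\cdot/\epsilon,\omega_0)
= \int_{\R^d}\bv\cdot \bu^\epsilon\,\varphi\,\mathbf{f}(\cdot/\epsilon,\omega_0)
+\int_{\R^d}(\bv^\epsilon-\bv)\cdot \bu^\epsilon\,\varphi\,\mathbf{f}(\cdot/\epsilon,\omega_0);
\end{equation*}
applying (i) to the first summand with test function $\bv\varphi$ (the density of $C^\infty_0\cdot L^\infty$ test functions being standard), and using the uniform $L^\infty$ bound on $\bu^\epsilon\,\mathbf{f}(\cdot/\epsilon,\omega_0)$ --- which is legitimate after approximating $\overline{\mathbf{f}}$ by an element of $\Cb^\infty(\Omega)$, dense in $\Lb^2(\Omega)$ --- together with the $L^1$ convergence of $\bv^\epsilon$ to control the second summand.

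Items (v) and (vi) are the structural results. For (v), one first extracts subsequences such that $\bu^\epsilon\rightharpoonup \bu_0$ in $\Hb^1(\R^d)$ and $\nabla\bu^\epsilon\overset{2}{\rightharpoonup}\overline{\mathbf{P}}$ for some $\overline{\mathbf{P}}\in [L^2(\R^d;\Lb^2(\Omega))]^d$. Testing the identity $\int \nabla \bu^\epsilon\cdot \bpsi(\cdot/\epsilon,\omega_0)\varphi(x)=-\int \bu^\epsilon\cdot (\varepsilon^{-1}\dive_y\bpsi(\cdot/\epsilon,\omega_0)\varphi+\bpsi(\cdot/\epsilon,\omega_0)\nabla\varphi)$ against stationary solenoidal $\overline{\bpsi}$, the singular term drops out, giving after passage to the limit the orthogonality $\E[(\overline{\mathbf{P}}-\nabla\bu_0)\cdot\overline{\bpsi}]=0$ for a.e.~$x$; by Weyl's decomposition on $\Lb^2(\Omega)$ \cite[Chapter 12.7]{ZKO}, this identifies $\overline{\mathbf{P}}-\nabla\bu_0$ as an element of $L^2(\R^d;\boldsymbol{\mathcal{V}}^2_\mathrm{pot}(\Omega))$. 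Item (vi) is proven analogously but simpler: from the uniform bounds one extracts two-scale limits $\overline{\bu}$ and $\overline{\mathbf{Q}}$ of $\bu^\epsilon$ and $\epsilon\nabla\bu^\epsilon$, and testing against $\epsilon\bvarphi\,\bpsi(\cdot/\epsilon,\omega_0)$ after integration by parts forces $\overline{\mathbf{Q}}=\overline{\nabla_y\bu}$.

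The main obstacle is the careful bookkeeping in (i) and (v): the full-measure set $\Omega_t$ must be chosen independently of the sequence $\{\bu^\epsilon\}$ to accommodate the Ergodic Theorem for the countable dense family $\mathcal{C}$ and for its subsequent enlargements by countably many solenoidal test fields. Beyond this, all six items amount to straightforward component-wise adaptations of the scalar theory in \cite{ZP}.
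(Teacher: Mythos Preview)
The paper does not give its own proof of this theorem; it merely states the result in the appendix and refers the reader to \cite{ZP} for details. Your proposal is essentially a componentwise elaboration of the standard scalar arguments from \cite{ZP}, which is exactly what the paper intends, so the approach is the same.

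One small slip in your treatment of item (iv): you invoke a ``uniform $L^\infty$ bound on $\bu^\epsilon\,\mathbf{f}(\cdot/\epsilon,\omega_0)$'', but $\bu^\epsilon$ is only bounded in $\Lb^2$, not $\Lb^\infty$. The remainder term is instead controlled by
\[
\left|\int_{\R^d}(\bv^\epsilon-\bv)\cdot \bu^\epsilon\,\varphi\,\mathbf{f}(\cdot/\epsilon,\omega_0)\right|
\le \|\bv^\epsilon-\bv\|_{\Lb^2(\operatorname{supp}\varphi)}\,\|\bu^\epsilon\|_{\Lb^2}\,\|\varphi\,\mathbf{f}(\cdot/\epsilon,\omega_0)\|_{\Lb^\infty},
\]
and then $\|\bv^\epsilon-\bv\|_{\Lb^2}^2\le \|\bv^\epsilon-\bv\|_{\Lb^\infty}\,\|\bv^\epsilon-\bv\|_{\Lb^1}\to 0$ by the assumed uniform $\Lb^\infty$ bound and $\Lb^1$ convergence. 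With this correction your sketch is sound.
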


\subsection{The Ergodic Theorem}
\label{The ergodic theorem}

A bridge between random variables and their realisations is provided by the Ergodic Theorem, a classical result on ergodic dynamical systems that appears in the literature in various (not always equivalent) formulations. For the reader's convenience, we report here the version used in our paper, see, e.g., \cite{RS}, or \cite{AK} for a more general take.

\begin{theorem}[Birkhoff's Ergodic Theorem]
\label{ergodic theorem}
Let $(\Omega, \mathcal{F}, P)$ be a complete probability space equipped with an ergodic dynamical system $(T_y)_{y\in \mathbb{R}^d}$. Let $\textcolor{black}{\overline{f}}\in L^p(\Omega)$,  $1\le p< \infty$. Then we have 
\[
f(\,\cdot\,/\epsilon,\omega)=\overline{f}(T_{\cdot/\epsilon}\omega) \rightharpoonup \textcolor{black}{\E[\overline{f}]}
\]
in $L^p_\mathrm{loc}(\R^d)$ as $\epsilon\to 0$ \textcolor{black}{almost surely}.
\end{theorem}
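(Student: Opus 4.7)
The plan is to derive the weak $L^p_{\mathrm{loc}}$ convergence from a pointwise (Wiener-type) spatial ergodic theorem. The key ingredient I would invoke is the continuous $d$-dimensional analogue of Birkhoff's theorem: for every $\overline{g}\in L^1(\Omega)$ there is a full-measure set $\Omega_{\overline{g}}\subset\Omega$ on which
\[
\lim_{R\to\infty}\frac{1}{|\square_x^R|}\int_{\square_x^R} g(y,\omega)\,dy=\E[\overline{g}]
\]
for every $x\in\R^d$. Ergodicity of $(T_y)_{y\in\R^d}$ is what forces the limit to be the constant $\E[\overline{g}]$: in its absence one would only get a $T$-invariant function, which ergodicity renders a.s.\ constant. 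This continuous statement can be reduced to the classical discrete Birkhoff theorem for the $\Z^d$-action generated by unit translations, combined with a Fubini-type averaging over the unit cell to absorb non-integer shifts; alternatively, it can be obtained directly from a Hopf-type maximal inequality for amenable group actions.

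First I would apply the pointwise statement to $|\overline{f}|^p\in L^1(\Omega)$: by rescaling, for every compact $K\subset\R^d$,
\[
\|f(\cdot/\epsilon,\omega)\|_{L^p(K)}^p=\epsilon^d\int_{K/\epsilon}|f(y,\omega)|^p\,dy \longrightarrow |K|\,\E[|\overline{f}|^p]
\]
almost surely as $\epsilon\to 0$, which in particular furnishes a uniform $L^p_{\mathrm{loc}}$ bound on the family $\{f(\cdot/\epsilon,\omega)\}_\epsilon$. By weak compactness it suffices to identify the weak limit on a dense set of test functions. Testing against indicators of boxes $\square_x^R$ with rational centre $x$ and rational sidelength $R$, a rescaling together with the pointwise ergodic theorem gives
\[
\int_{\square_x^R} f(y/\epsilon,\omega)\,dy=\epsilon^d\int_{\square_{x/\epsilon}^{R/\epsilon}} f(z,\omega)\,dz \longrightarrow |\square_x^R|\,\E[\overline{f}] = \int_{\square_x^R}\E[\overline{f}]\,dy.
\]
Because only countably many rational boxes are involved, a single full-measure set $\Omega_t$ suffices for all of them simultaneously; their characteristic functions span a dense subset of $L^q(K)$ for $q=p/(p-1)<\infty$ (respectively, of $C_c(\R^d)$ when $p=1$), and the uniform $L^p_{\mathrm{loc}}$ bound upgrades pointwise testing to weak convergence by a standard $\epsilon/3$ density argument.

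The principal obstacle is the measure-theoretic bookkeeping required to produce a \emph{single} exceptional null set valid for all admissible test functions and all scales $\epsilon\to 0$ simultaneously. I would handle this by restricting the identification step to the countable family of rational boxes, and invoking the separability of the predual of $L^p_{\mathrm{loc}}(\R^d)$ for $p>1$ so that one null set suffices; the boundary case $p=1$ requires a little extra care, since $L^\infty$ is not separable, but can be handled by testing against the separable space $C_c(\R^d)$, whose action determines weak convergence of $L^1_{\mathrm{loc}}$ functions in view of the uniform bound from the second step.
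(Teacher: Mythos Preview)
The paper does not prove this theorem: it is stated as a classical result and the reader is referred to \cite{RS} and \cite{AK}. Your sketch is a correct and standard derivation of the weak $L^p_{\mathrm{loc}}$ statement from the pointwise (Wiener-type) ergodic theorem for averages over cubes, and for $p>1$ the density argument via rational boxes together with the uniform $L^p$ bound closes cleanly.

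One small correction in the $p=1$ case: the claim that testing against $C_c(\R^d)$ together with a uniform $L^1_{\mathrm{loc}}$ bound determines weak $L^1_{\mathrm{loc}}$ convergence is not true in general (take $f_n=n\mathbbm{1}_{[0,1/n]}$, which is $L^1$-bounded and converges against $C_c$ to $\delta_0$, yet has no weak $L^1$ limit). What is missing is uniform integrability. This is easy to supply in your framework: apply the pointwise ergodic theorem to the countable family $|\overline{f}|\,\mathbbm{1}_{|\overline{f}|>M}$, $M\in\mathbb{Q}_{>0}$, to obtain, for each compact $K$,
\[
\limsup_{\epsilon\to 0}\int_K |f(\cdot/\epsilon,\omega)|\,\mathbbm{1}_{|f(\cdot/\epsilon,\omega)|>M}\,dx=|K|\,\E\big[|\overline{f}|\,\mathbbm{1}_{|\overline{f}|>M}\big]\xrightarrow[M\to\infty]{}0,
\]
which yields uniform integrability on a single full-measure set. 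With this in hand, the Dunford--Pettis criterion (or a direct $\epsilon/3$ argument using approximation of $L^\infty$ test functions in $L^1$ by step functions over rational boxes) upgrades your $C_c$-testing to genuine weak $L^1_{\mathrm{loc}}$ convergence.
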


\begin{remark}
Observe that Theorem~\ref{ergodic theorem} implies
\begin{equation}
\label{ergodic theorem concrete}
\lim_{R\to +\infty} \frac{1}{R^d}\int_{\square^R} f(y,\omega)\,dy=\textcolor{black}{\E[\overline{f}]}.
\end{equation}
In the current paper, we often apply the Ergodic Theorem in the more concrete version \eqref{ergodic theorem concrete}, with the ergodic dynamical system being \eqref{T_y}.
\end{remark}

\subsection{Higher regularity for the periodic corrector}
\label{Appendix higher regularity}

In this appendix we state a result on the higher (than $\Hb^1$) regularity of the periodic homogenisation corrector $\hat \Nb^{(j,k)}$, needed in the proof of Theorem~\ref{theorem G sub limiting spectrum}. 

\begin{theorem}
\label{theorem higher regularity corrector} 
Under Assumptions~\ref{main assumption} and~\ref{main assumption 2} and with the notation from formulae~\eqref{proof part2 equation 5}, \eqref{proof part2 equation 6} and surrounding text, there exists $p>2$ and a universal constant $C>0$ such that
\begin{equation*}
\label{theorem higher regularity corrector equation 3}
\|\nabla \hat \Nb^{(j,k)}\|_{[\Lb^p(\tilde\square_{x_1}^{R+\kappa})]^d}\le C (R+\kappa)^{\frac{d}{p}}\,.
\end{equation*}
\end{theorem}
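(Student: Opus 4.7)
The strategy is to deduce the claim by a Meyers-type higher integrability argument, following the classical Gehring lemma approach adapted to perforated periodic geometries. First, I extend $\hat \Nb^{(j,k)}$ together with the perforated geometry periodically to all of $\R^d$, which removes any contribution from the outer boundary of $\square_{x_1}^{R+\kappa}$ and reduces the task to an interior estimate on a periodised problem. Since the statement concerns a particular $p>2$, a fixed choice of exponent throughout is enough and I need not track dependence on $p$.

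Next, starting from the weak formulation \eqref{proof part2 equation 6} tested against $\eta^2 (\hat{\Nb}^{(j,k)} - c)$ for a cut-off $\eta$ supported on $B_\rho(x_0)$ and a suitable constant vector $c$, I derive a Caccioppoli-type inequality for $\sym\nabla \hat \Nb^{(j,k)}$. Combining it with a Sobolev--Poincar\'e inequality on $B_\rho(x_0)$ intersected with the periodised perforated domain (with $c$ chosen as the corresponding mean, and Korn's inequality used to pass from symmetric to full gradients via a uniform extension), this produces a reverse H\"older inequality
\begin{equation*}
\left(\fint_{B_{\rho/2}(x_0)} |\sym \nabla \hat{\Nb}^{(j,k)}|^2\right)^{1/2} \le C \left(\fint_{B_\rho(x_0)} |\sym \nabla \hat{\Nb}^{(j,k)}|^q\right)^{1/q} + C
\end{equation*}
for some $q\in[1,2)$ and all $x_0\in\R^d$, $\rho>0$. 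The crucial point is that the constants here are independent of $x_0$, $\rho$ and $R$, which is guaranteed by the uniform ellipticity of $\C_1$, the uniform extension and Korn constants from Proposition~\ref{proposition uniform C}, and the $C^2$-diffeomorphic equivalence of every inclusion to one of finitely many reference shapes (Assumption~\ref{main assumption 2}).

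An application of the Gehring lemma then upgrades this to an estimate with a universal exponent $p>2$:
\begin{equation*}
\left(\fint_{B_{\rho/2}(x_0)} |\nabla \hat{\Nb}^{(j,k)}|^p\right)^{1/p} \le C \left(\fint_{B_\rho(x_0)} |\nabla \hat{\Nb}^{(j,k)}|^2\right)^{1/2} + C\,,
\end{equation*}
still with universal $C$. Taking $\rho$ of order $R+\kappa$ so that $B_\rho(x_0)\supseteq \tilde\square_{x_1}^{R+\kappa}$, combining with the $L^2$ bound \eqref{proof part2 equation 9}, and finally transferring from the perforated domain to the full cube $\square_{x_1}^{R+\kappa}$ by means of the uniform extension inequalities \eqref{extension property 4}--\eqref{extension property 6} (via Proposition~\ref{proposition uniform C}), yields $\|\nabla \hat \Nb^{(j,k)}\|_{\Lb^p(\square_{x_1}^{R+\kappa})}\le C(R+\kappa)^{d/p}$, as claimed.

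The main obstacle will be verifying the universality of the constant appearing in the reverse H\"older step. Meyers' argument for elliptic systems on a fixed bounded domain is classical, but what is needed here is a version in which the constant depends only on the ellipticity of $\C_1$ and on the \emph{local} geometry of the perforations at unit scale, \emph{not} on the macroscopic size $R+\kappa$. This is precisely the role played by Assumption~\ref{main assumption 2}: the $C^2$-equivalence to a finite list of reference shapes makes the relevant local Korn, Sobolev and Poincar\'e constants uniform in the position $x_0$ of the ball, which in turn propagates through Caccioppoli and Gehring to produce an $R$-independent constant in the final $L^p$ bound.
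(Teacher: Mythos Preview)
Your approach is essentially the paper's: both arguments derive a reverse H\"older inequality from a Caccioppoli estimate combined with a uniform (in position and scale) Korn--Sobolev--Poincar\'e inequality on the perforated domain, and then invoke Gehring's lemma to upgrade $L^2$ to $L^p$ with a universal $p>2$.

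One correction is required. In the Caccioppoli and Sobolev--Poincar\'e steps you subtract a \emph{constant} vector $c$ chosen as the mean. For systems governed by $\sym\nabla$, the kernel consists of infinitesimal rigid motions $x\mapsto Ax+b$ with $A$ skew, not merely constants; subtracting only the mean and then invoking Korn's inequality does not close, since Korn without boundary conditions controls $\nabla u$ by $\sym\nabla u$ only modulo rigid motions. The paper's version of the local estimate, \eqref{74}, accordingly subtracts the rigid-motion projection $\mathbf{c}_R(x)$ of $\hat\Nb^{(j,k)}$ on the relevant ball. With that amendment your chain Caccioppoli $\to$ Korn--Sobolev--Poincar\'e $\to$ reverse H\"older $\to$ Gehring goes through exactly as you describe. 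The paper also spells out a two-case analysis (ball radius small vs.\ comparable to the characteristic inclusion size) to verify the uniformity of the constant in \eqref{74}, which is the content of your final paragraph's appeal to Assumption~\ref{main assumption 2}.
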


\begin{proof}
The proof retraces that of \cite[Theorem 5.20]{CCV2}, therefore we will only streamline the changes required. 

The approach of \cite[Theorem 5.20]{CCV2} relies on the use of the reverse H\"older inequality \cite[Lemma C.4]{CCV2}. In the case of systems one needs to do the following.
\begin{itemize}
\item 
Use the Extension Theorem \ref{theorem extension} for \emph{some} $p>2$.

\item
Replace the uniform Poincar\'e--Sobolev inequality with the uniform Korn--Sobolev inequality.

\item 
Replace \cite[Lemma C.1]{CCV2} with the following statement. There exists $m\ge 1$, $R_1>0$ and $C>0$ such that
for every $\mathbf{f} \in \W^{1,2} (\mathbb{R}^d \setminus \omega)$ we have
\begin{equation}\label{74}
\|\mathbf{f} - \mathbf{c}_{R}(x) \|_{\Lb^q(B_{R}(x) \setminus  \omega)} \leq C R^{d(\frac{1}{q} - \frac{1}{2})+1} \|\sym \nabla \mathbf{f}\|_{\Lb^2( B_{mR}(x) 	\setminus \omega)},
\end{equation}
for all $x\in \R^d$ and $R<R_1$, where $2\leq q\leq 2d/(d-2)$ and $\textbf{c}_R(x)$ is some infinitesimal rigid motion depending on $\left.\mathbf{f}\right|_{B_{m\,R}(x)\setminus \omega}$. 
\end{itemize}
Since the RHS of \eqref{74} contains a scaling factor $m$, when proving \eqref{74} one is faced with two cases:
\begin{enumerate}[(i)]
\item \emph{The radius $R$ is small with respect to the characteristic size of the inclusions}. Then, if $B_R(x)$ does not intersect $\omega$ one can take $m=1$ so that \eqref{74} is just the usual Korn--Sobolev inequality for balls (appropriately scaled). If, on the other hand, $B_R(x)$ intersects $\omega$, \textcolor{black}{without loss of generality} we can assume that $x\in \partial \omega^k$ for some $k$. In this case, using Assumption~\ref{main assumption 2}, one has that $B_R(x)\setminus \omega$ is $C^2$-diffeomorphic to a half-ball. The uniformity then follows by contradiction arguing along the lines of \cite[Lemma~1 and Remark~6]{velcic}.

\item \emph{The radius $R$ is comparable or bigger than the characteristic size of the inclusions}. Then, by choosing $m$ sufficiently large, one can assume that $ B_{mR}(x)$ fully contains the inclusions $\omega^k$ with non-empty intersection with $B_{R}(x)$ together with their extension domains $\mathcal{B}_\omega^k$. Hence, one obtains \eqref{74} by using the Korn--Sobolev inequality in $B_{R}(x)$ applied to the extension of $\mathbf{f}$ via Theorem~\ref{theorem extension}.
\end{enumerate}
 One can then apply reverse H\"older inequality and prove the theorem exactly as in \cite[Appendix~C]{CCV2}.
\end{proof}

We refer the reader to Remark~\ref{remark on sufficiently regular} for further comments on the (non) optimality of our assumptions for Theorem~\ref{theorem higher regularity corrector}.

\end{appendices}

\end{document}